\documentclass[11pt,a4paper]{amsart}
\newcommand{\Title}{Distances Between von Neumann Subalgebras: Spin Models, Commuting Squares, and Free Group Factors}%
\newcommand{\ShortTitle}{\Title}%

\newcommand{\AuthorOne}{Indrajit Ghosh}%
\newcommand{\AuthorOneAddr}{%
	Department of Mathematics, Indian Institute of Technology Kanpur, Uttar Pradesh 208 016, India
}%

\newcommand{\AuthorOneEmail}{%
	indrajitghosh912@gmail.com, indrajitg@iitk.ac.in
}%

\newcommand{\AuthorTwo}{Sumit Kumar}%
\newcommand{\AuthorTwoAddr}{Department of Mathematics, Indian Institute of Technology Kanpur, Uttar Pradesh 208 016, India
}%
\newcommand{\AuthorTwoEmail}{sumitkumar.sk809@gmail.com}

\newcommand{\SubjectClassText}{Primary 46L05, 47C15, 47L40; Secondary 46L10}

\newcommand{\Keywords}{Kadison-Kastler Distance, Mashood-Taylor Distance, Spin Model Subfactor, Group von Neumann Algebras, Commuting Square, Hadamard Matrices}

\newcommand{\pdfTitle}{\Title}
\newcommand{\pdfAuthor}{Indrajit Ghosh}
\newcommand{\pdfSubject}{Mathematics, Research paper}
\newcommand{\pdfKeywords}{\Keywords}
\newcommand{\pdfCreator}{TeXlive}
\newcommand{\pdfCreationDate}{\today}
\newcommand{\pdfColorLink}{true}
\newcommand{\pdfLinkColor}{cyan}
\newcommand{\pdfUrlColor}{blue}
\newcommand{\pdfCiteColor}{magenta}

\usepackage[top=0.9in, bottom=1in, left=0.7in, right=0.7in]{geometry}
\usepackage{amsmath, amssymb, amsthm} 
\usepackage[utf8]{inputenc}
\usepackage[T1]{fontenc}
\usepackage{mathtools}
\usepackage{mathrsfs} 
\usepackage{xfrac} 
\usepackage{dsfont} 
\usepackage{array}
\usepackage{verbatim}
\usepackage{graphicx}
\usepackage{mdframed}
\usepackage{enumitem} 
\usepackage{hyperref}
\hypersetup{
	pdftitle={\pdfTitle},
	pdfauthor={\pdfAuthor},
	pdfsubject={\pdfSubject},
	pdfcreationdate={\pdfCreationDate},
	pdfcreator={\pdfCreator},
	pdfkeywords={\pdfKeywords},
	colorlinks=\pdfColorLink,
	linkcolor={\pdfLinkColor},
	urlcolor=\pdfUrlColor,
	citecolor=\pdfCiteColor,
	pdfpagemode=UseOutlines,
}
\usepackage{tikz-cd} 
\usepackage{lipsum}
\usetikzlibrary{matrix,arrows}
\usepackage[english]{babel}
\usepackage{lmodern}
\usepackage{bbm} 
\usepackage[dvipsnames]{xcolor}
\usepackage[most]{tcolorbox}
\usepackage{xparse}

\theoremstyle{plain}
\newtheorem{theorem}{Theorem}[section]
\newtheorem{prop}[theorem]{Proposition}
\newtheorem{lem}[theorem]{Lemma}
\newtheorem{cor}[theorem]{Corollary}

\theoremstyle{definition}
\newtheorem{definition}[theorem]{Definition}
\newtheorem{example}[theorem]{Example}

\theoremstyle{remark}
\newtheorem{remark}[theorem]{Remark}

\numberwithin{equation}{section}

\newtheoremstyle{ser}
{8pt}
{8pt}
{\it}
{}
{\sf}
{:}
{6mm}
{}

\theoremstyle{ser}

\newtheoremstyle{serr}
{8pt}
{8pt}
{\normalfont}
{}
{\sf}
{.}
{6mm}
{}

\theoremstyle{serr}

\theoremstyle{ser}

\theoremstyle{ser}

\newtheoremstyle{collabquestion}
  {8pt}
  {8pt}
  {\normalfont}
  {}
  {\sffamily\bfseries\color{blue!70!black}}
  {.}
  {.5em}
  {}

\theoremstyle{collabquestion}
\newtheorem{qninner}{Question}

\definecolor{indraRed}{rgb}{0.593, 0.183, 0.183}
\definecolor{indraPink}{rgb}{0.858, 0.188, 0.478}
\definecolor{indraBlue}{rgb}{0, 0.199, 0.398}
\definecolor{madridBlue}{rgb}{0.199, 0.199, 0.695}
\definecolor{metropolisThemeColor}{rgb}{0.105, 0.214, 0.234}
\definecolor{metropolisBarColor}{rgb}{0.984, 0.0.515, 0.015}
\definecolor{UBCblue}{rgb}{0.04706, 0.13725, 0.26667} 
\definecolor{UBCgrey}{rgb}{0.3686, 0.5255, 0.6235} 

\makeatletter
\def\mathcolor#1#{\@mathcolor{#1}}
\def\@mathcolor#1#2#3{%
	\protect\leavevmode
	\begingroup
	\color#1{#2}#3%
	\endgroup
}
\makeatother

\makeatletter
\def\ps@headings{\ps@empty
  \def\@evenhead{\normalfont\scriptsize\hfil \leftmark{}{}\hfil}%
  \def\@oddhead{\normalfont\scriptsize\hfil \rightmark{}{}\hfil}%
  \let\@mkboth\markboth
  \def\@evenfoot{\normalfont\scriptsize\hfil\thepage\hfil}%
  \def\@oddfoot{\normalfont\scriptsize\hfil\thepage\hfil}%
}
\makeatother

\makeatletter
\def\author@andify{%
  \nxandlist {\unskip ,\penalty-1 \space\ignorespaces}%
    {\unskip {} \@@and~}%
    {\unskip \penalty-2 \space \@@and~}%
}
\makeatother

\definecolor{indraRed}{rgb}{0.593, 0.183, 0.183}
\definecolor{indraPink}{rgb}{0.858, 0.188, 0.478}
\definecolor{indraBlue}{rgb}{0, 0.199, 0.398}
\definecolor{madridBlue}{rgb}{0.199, 0.199, 0.695}
\definecolor{metropolisThemeColor}{rgb}{0.105, 0.214, 0.234}
\definecolor{metropolisBarColor}{rgb}{0.984, 0.0.515, 0.015}
\definecolor{UBCblue}{rgb}{0.04706, 0.13725, 0.26667} 
\definecolor{UBCgrey}{rgb}{0.3686, 0.5255, 0.6235} 

\newcommand{\spn}{{\operatorname{span}\,}}

\newcommand{\C}{\mathbb{C}}
\newcommand{\R}{\mathbb{R}}
\newcommand{\N}{\mathbb{N}}

\newcommand{\Z}{\mathbb{Z}}

\renewcommand{\emptyset}{\varnothing}

\newcommand{\Bh}{\mathcal{B}(\mathcal{H})} 
\NewDocumentCommand{\mn}{m o}
{
  \mathbb{M}_{#1}\IfValueT{#2}{(#2)}
}

\NewDocumentCommand{\hn}{m o}
{
  \mathbb{H}^u_{#1}\IfValueT{#2}{(#2)}
}

\DeclareMathOperator{\dist}{dist} 

     \newcommand{\sA}{\mathcal A}		
     \newcommand{\sB}{\mathcal B}		
     \newcommand{\sC}{\mathcal C}		
     \newcommand{\sD}{\mathcal D}		
     		
\newcommand{\fF}{\mathfrak F}     		
     \newcommand{\sG}{\mathcal G}		
     \newcommand{\sH}{\mathcal H}

     \newcommand{\sM}{\mathcal M}		\newcommand{\kM}{\mathscr{M}}
     \newcommand{\sN}{\mathcal N}		
     		
     \newcommand{\sP}{\mathcal P}		
     \newcommand{\sQ}{\mathcal Q}		
     \newcommand{\sR}{\mathcal R}		\newcommand{\kR}{\mathscr{R}}
     \newcommand{\sS}{\mathcal S}		
     		
     \newcommand{\sU}{\mathcal U}

     \newcommand{\sX}{\mathcal X}		
     \newcommand{\sY}{\mathcal Y}		
     \newcommand{\sZ}{\mathcal Z}		

\newcommand{\mF}{\mathbb{F}}
\newcommand{\mI}{\mathbb{I}}

\newcommand{\sfH}{\mathsf{H}}

\NewDocumentCommand{\nor}{g}
  {\sN\IfValueT{#1}{_{#1}}}

\NewDocumentCommand{\unor}{g}
  {\sU\sN\IfValueT{#1}{_{#1}}}

\NewDocumentCommand{\grnor}{g}
  {\sG\sN\IfValueT{#1}{_{#1}}}

\newcommand{\dkk}{\mathrm{d}_{\mathrm{KK}}}
\newcommand{\ball}[1]{\left( #1\right)_1}
\newcommand{\dmt}{\mathrm{d}_{\mathrm{MT}}}
\newcommand{\gen}[1]{\left\langle #1 \right\rangle}

\newcommand{\op}[1]{\left\| #1 \right\|_{\mathrm{op}}}
\newcommand{\Deltan}[1][n]{\Delta^{(#1)}}

\newcommand{\II}{\mathrm{II}}

\newcommand{\bg}{\sim}
\newcommand{\hada}{\sim_{\mathrm{h}}}
\newcommand{\fourier}[1]{\mathsf{F}_{#1}}

\begin{document}
	
        \title[\ShortTitle]{\MakeUppercase\Title}
	\author{\AuthorOne}
	\address[Indrajit Ghosh]{\AuthorOneAddr}
	
	\email{\AuthorOneEmail}

    \author{\AuthorTwo}
	\address[Sumit Kumar]{\AuthorTwoAddr}
	\email{\AuthorTwoEmail}

	\date{}
	\subjclass{\SubjectClassText}
	\keywords{\Keywords}
	
	
	\begin{abstract}
		We investigate the relative position of von Neumann subalgebras by studying their Mashood--Taylor ($\mathrm{d}_{\mathrm{MT}}$) and Kadison--Kastler ($\mathrm{d}_{\mathrm{KK}}$) distances, alongside the interior angle. For each $n\in\mathbb{N}$, we show that the hyperfinite $\mathrm{II}_1$-factor $\mathscr{R}$ contains an uncountable family of pairwise distinct regular $n\times n$ spin model subfactors. In particular, this yields a continuous family $(\mathscr{R}_{\mathsf{H}_\alpha})_{\alpha\in[0,\pi)}$ of $2\times2$ spin model subfactors, for which we establish the exact formula
\[
\mathrm{d}_{\mathrm{MT}}(\mathscr{R}_{\mathsf{H}_\alpha},\mathscr{R}_{\mathsf{H}_\beta})
= \vert\sin(\alpha-\beta)\vert.
\]
Furthermore, we prove that a pair of such spin model subfactors forms a commuting square over their intersection if and only if they are maximally distant ($\mathrm{d}_{\mathrm{MT}}=1$). Motivated by this, we establish general structural results showing that commuting squares of $\mathrm{II}_1$-factors (under natural index conditions) force maximal distance, yielding $\mathrm{d}_{\mathrm{KK}}=1=\mathrm{d}_{\mathrm{MT}}$. We also show that two diffuse subalgebras that are orthogonal in the sense of Popa must be maximally distant. In contrast, we observe that no two subfactors in the family $(\mathscr{R}_{\mathsf{H}_\alpha})_{\alpha\in[0,\pi)}$ are orthogonal in the sense of Popa. Nevertheless, whenever two such spin model subfactors are maximally distant, their interior angle over their intersection is $\pi/2$, showing that `orthogonality' in terms of the interior angle is distinct from orthogonality in the sense of Popa. Finally, in the free group factor $L(\mathbb{F}_2)=L(\langle a,b\rangle)$, we establish the explicit formula
\[
\mathrm{d}_{\mathrm{MT}}(L(\langle a\rangle),uL(\langle a\rangle)u^*)
= \sqrt{1-\vert\tau(u)\vert^4}
\]
for $u\in L(\langle b\rangle)$. We also construct maximally distant masas in $L(\mathbb{F}_2)$ that do not arise from subgroups of $\mathbb{F}_2$.
	\end{abstract}

	\maketitle%
        \thispagestyle{empty}%


    \section{Introduction}

Quantitative measures of the distance between von Neumann subalgebras provide essential insights into their relative positions and algebraic structures. Two prominently studied metrics in this context are the Kadison-Kastler distance \cite{Kadison_Kastler}, denoted by $\dkk$, and the Mashood-Taylor distance \cite{Mashood-Taylor-1988}, denoted by $\dmt$ (see also \cite{christensen-79}). 
Understanding how these geometric distances constrain, or are constrained by, the algebraic configurations of the subalgebras remains a central theme in the theory of operator algebras. In our recent work \cite{ghoshkumar-kk2026}, we investigated the Kadison-Kastler distance and the interior angle (in the sense of \cite{Bakshi_gupta_2021}; see also \cite{bakshi-etal-2019}) between maximal abelian subalgebras (masas) in $\mn{n}$. A principal conclusion of that study was an explicit relationship between the Kadison-Kastler distance and the interior angle. This naturally motivates the present investigation, in which we explore analogous geometric relationships for the Mashood-Taylor distance across various classes of subalgebras.

To pursue this, we first focus on the spin model subfactors of the hyperfinite $\II_1$-factor $\mathscr{R}$, originally introduced by Jones (\cite{jones-2021}; see also \cite{jones-sunder}). Given $n\times n$ Hadamard unitaries $u,v\in\hn{n}$ that are inequivalent in the sense of Bakshi and Guin \cite{BG2025-1}, one obtains distinct spin model subfactors $\kR_u$ and $\kR_v$ of $\mathscr{R}$. In fact, these spin model subfactors give rise to a rich collection of regular subfactors of $\mathscr{R}$: for every $n\in\mathbb{N}$, $\mathscr{R}$ contains an uncountable family of pairwise distinct \emph{regular} $n\times n$ spin model subfactors (see Proposition \ref{prop:uncountable-reg-spin-models}). The first part of this paper is devoted to the relative position and distances between such subfactors. The structure is particularly tractable in the $2\times 2$ case, where Hadamard equivalence classes can be parametrized by a single variable $\alpha\in[0,\pi)$, yielding a continuous family of subfactors $(\kR_{\sfH_\alpha})_{\alpha\in[0,\pi)}$. We prove the following:
\vspace{0.3cm}

\noindent \textbf{Theorem A:} (Theorem \ref{thm:mt-distance-2by2})
For any distinct $\alpha, \beta \in [0,\pi)$, we have
$$\dmt(\kR_{\sfH_\alpha},\kR_{\sfH_\beta}) = \vert{}\sin(\alpha-\beta)\vert{}.$$

An immediate consequence of this explicit formula is that the distance between such spin model subfactors is not quantized; rather, every value in $[0,1]$ occurs as the Mashood-Taylor distance between some pair in this family (see Corollary \ref{cor:dmt-is-continuous}).

The absence of quantization for these distances closely parallels the behavior of the Kadison-Kastler distance and interior angle between masas in $\mn{n}$ established in \cite{ghoshkumar-kk2026}, where the interior angle can take any value in $[0,\pi/2]$. This flexible geometric behavior stands in stark contrast to the deep rigidity found in other settings, such as the result of \cite[Theorem 3.6]{bakshi-etal-2019}, which proves that for minimal intermediate subfactors of a finite-index irreducible subfactor, the interior angle cannot fall below $\pi/3$. This dichotomy highlights a broader motivating question of our work: identifying which classes of subalgebras exhibit quantized geometric invariants and which allow for continuous variation. For the $2 \times 2$ spin model subfactors, we further show that the interior angle also attains every value in $[0,\pi/2]$, and we obtain the exact relationship (see Corollary \ref{cor:dmt-is-same-alpha-2by2}):
$$\dmt(\kR_{\sfH_\alpha},\kR_{\sfH_\beta}) = \sqrt{1-\cos\left( \alpha_{\kR_{\sfH_\alpha}\cap\kR_{\sfH_\beta}}^{\kR} (\kR_{\sfH_\alpha},\kR_{\sfH_\beta}) \right)}.$$

The intersection of any two $2 \times 2$ spin model subfactors is known to be a $\II_1$-subfactor, namely a vertex model subfactor \cite{BG2025-1}. This naturally leads to the question of when two such subfactors form a commuting square over their intersection. In this direction, we prove the following:
\vspace{0.3cm}

\noindent \textbf{Theorem B:} (Theorem \ref{cor:comm-sq-d-1-2times2}) Let $\alpha,\beta\in[0,\pi)$ be distinct. Then the following quadruple
    \[
\begin{array}{ccc}
\kR_{\sfH_\alpha}  & \subset & \kR \\
\cup && \cup \\
\kR_{\sfH_\alpha} \cap \kR_{\sfH_\beta} & \subset & \kR_{\sfH_\beta}
\end{array}
\]
is a commuting square if and only if $\dmt (\kR_{\sfH_\alpha}, \kR_{\sfH_\beta}) = 1.$
\vspace{0.3cm}

Because $\dmt \le \dkk$ in general, this demonstrates that maximal Kadison-Kastler distance is a necessary condition for such a pair of spin model subfactors to form a commuting square (see Corollary \ref{cor:comm-sq-dkk-1-2times2}). It is important to emphasize that this necessity is specific to the structural properties of these configurations; as we show in Remark \ref{rem:comm-sq-vs-max-d}, for general quadruples of subfactors, it is possible to achieve a distance of $1$ without the presence of a commuting square.

Motivated by this, we establish broader general results detailing when commuting squares of $\II_1$-factors force maximal spatial separation. More precisely, we prove the following:
\vspace{0.3cm}

\noindent \textbf{Theorem C:} (Theorem \ref{thm:dkk-is-1-for-comm-sq-spin}) Let $\sM$ be a finite von Neumann algebra, and let $\sN,\sP,\sQ$ be $\II_1$-subfactors of $\sM$ such that
\[
\begin{array}{ccc}
    \sP & \subset & \sM \\
    \cup && \cup \\
    \sN & \subset & \sQ
\end{array}
\]
is a commuting square. Suppose that the index of at least one of the inclusions $\sN\subseteq\sP$ or $\sN\subseteq\sQ$ belongs to $\{2,3\}\cup[4,\infty]$. Then $\dkk(\sP,\sQ)=1=\dmt(\sP,\sQ)$.

Additionally, we prove an analogous result for intermediate subfactors:
\vspace{0.3cm}

\noindent \textbf{Theorem D:} (Theorem \ref{thm:comm-sq-imply-d-1-ii-1}) Let $\sN \subseteq \sM$ be $\mathrm{II}_1$ subfactors with $[\mathcal{M}:\mathcal{N}]\ge 4$, and let $\sP,\sQ$ be intermediate $\mathrm{II}_1$ subfactors.
    Suppose that
    \[\begin{array}{ccc}
            \sP & \subset & \sM \\
            \cup & & \cup \\
            \sN & \subset & \sQ
        \end{array}
    \]
    is a commuting square. Then
    \[
        \dmt(\sP,\sQ)=1=\dkk(\sP,\sQ).
    \]

These theorems show that across broad classes of $\II_1$-factor configurations, a commuting square naturally forces the participating subalgebras to be maximally distant. Applying this back to the spin model setting, we got the following satisfying result in the spin model subfactors:
\vspace{0.3cm}

\noindent \textbf{Theorem E:} (Theorem \ref{thm:spin-model-n-times-n})  Let $u,v\in\hn{n}$ with $u\not\bg v$ such that $\kR_u\cap\kR_v$ is a finite-index subfactor of $\kR$.
    If
    \[
        \begin{array}{ccc}
            \kR_u & \subset & \kR \\
            \cup && \cup \\
            \kR_u\cap\kR_v & \subset & \kR_v
        \end{array}
    \]
    is a commuting square, then
    \[
        \dkk(\kR_u,\kR_v)=1=\dmt(\kR_u, \kR_v).
    \]

Consequently, if $\dkk(\kR_u,\kR_v) \neq 1$, it must be that either $\kR_u \cap \kR_v$ is not a finite-index subfactor of $\mathscr{R}$, or the quadruple fails to form a commuting square (see Corollary \ref{cor:necessity-for-commuting-square}). 

In \S\ref{subsec:regularity}, we briefly investigate the regularity of spin model subfactors of the hyperfinite $\mathrm{II}_1$-factor. In particular, we show that, for every $n\in\mathbb{N}$, $\mathscr{R}$ contains an uncountable family of pairwise distinct regular $n\times n$ spin model subfactors (see Proposition \ref{prop:uncountable-reg-spin-models}). This is obtained by constructing an uncountable family of Hadamard unitaries that are Hadamard equivalent to the Fourier matrix but pairwise inequivalent in the sense of Bakshi--Guin (see Lemma \ref{lem:uncountable-unitaries}).

In \cite{popa-1983}, Popa introduced a notion of orthogonality for two subalgebras of a finite von Neumann algebra. Namely, subalgebras $\sP,\sQ\subseteq (\kM,\tau)$ are said to be \emph{orthogonal} if $\tau(xy)=0$ whenever $x\in\sP$ and $y\in\sQ$ satisfy $\tau(x)=\tau(y)=0$. In the following theorem, we show that two diffuse subalgebras can be orthogonal only when their distance is maximal:
\vspace{0.3cm}

\noindent \textbf{Theorem F:} (Theorem \ref{thm:diffuse-comm-sq}) Let $(\sM, \tau)$ be a finite von Neumann algebra, and let $\sP,\sQ\subseteq \sM$ be diffuse orthogonal subalgebras. Then
\[
\dkk(\sP,\sQ)=1=\dmt(\sP,\sQ).
\]

However, in Example \ref{exam:ortho-converse-not-true}, we provide an example showing that the converse need not hold. In contrast, we observe that no two subfactors in the family $(\mathscr{R}_{\mathsf{H}_\alpha})_{\alpha\in[0,\pi)}$ are orthogonal in the sense of Popa. Nevertheless, whenever two such spin model subfactors are maximally distant, their interior angle over their intersection is $\pi/2$, showing that orthogonality in terms of the interior angle is distinct from orthogonality in the sense of Popa; see Remark~\ref{rem:popa-ortho-not-same}.

In the final part of the paper, we turn our attention to distances in group von Neumann algebras of countable discrete groups. In this setting we establish the following: 
\vspace{0.3cm}

\noindent \textbf{Theorem G:} (Theorem \ref{thm:mt_is_trace}) Let $\mF_{2}= \langle a, b \rangle$ and consider a unitary $u \in L(\langle b \rangle)$. Then 
    \[
    \dmt (L(\langle a \rangle), u L(\langle a \rangle)u^*) = \sqrt{1- |\tau(u)|^4}.
    \]    

As a direct consequence (see Corollary \ref{cor:mt-attains-val}), for every $r \in [0,1]$, there exists a unitary $u \in L(\mathbb{F}_2)$ such that
$$\dmt(L(\langle a \rangle),uL(\langle a \rangle)u^*) = r.$$

This continuous spectrum of distances provides a compelling contrast to recent work \cite{Gupta_kumar_2024, kumar-2026}, which shows that the distance between $L(H)$ and $L(K)$ for distinct subgroups $H, K$ is always $1$. Thus, our construction yields maximally distant subalgebras that do not necessarily arise from subgroups. In particular (see Theorem \ref{thm:dkk-1-for-non-tiv-alg}), we show that there exists $u \in L(\langle b \rangle)$ such that $uL(\langle a \rangle)u^*$ is not a group von Neumann algebra associated with any subgroup of $\mathbb{F}_2$, yet

$$\dkk(L(\langle a \rangle),uL(\langle a \rangle)u^*) = 1 = \dmt(L(\langle a \rangle),uL(\langle a \rangle)u^*).$$

A crucial ingredient in our arguments for group von Neumann algebras is a Fourier-analytic property (see Proposition \ref{prop:haar-uni-non-trivial}): for every countable discrete group $G$ with $\vert{}G\vert{} \ge 4$, there exists a unitary $u \in L(G)$ satisfying $\tau(u)=0$ and having at least two nonzero distinct Fourier coefficients. Although this specific fact may be known to experts, we include a rigorous proof for completeness, as it underpins our main algebraic constructions.

The paper is organized as follows. \S\ref{sec:prelims} collects necessary preliminaries on the Kadison-Kastler distance, the Mashood-Taylor distance, and the interior angle. \S\ref{sec:spin-model} focuses on the Spin Model subfactors, explicitly computing their distances in the $2 \times 2$ case and analyzing the continuous spectrum of these geometric invariants. We have shown the existence of uncountably many regular $n\times n$ spin model subfactors in \S\ref{subsec:regularity}. In \S\ref{sec:dist-comm-sq}, we establish our general theorems demonstrating that commuting squares of $\II_1$-factors and diffuse subalgebras force maximal distance. Finally, \S\ref{sec:gr-vNa-dist} shifts to the group von Neumann algebra setting, proving the explicit distance formulas for unitary conjugates in $L(\mathbb{F}_2)$ and containing the requisite Fourier-analytic constructions.

    
    \section{Preliminaries}
    \label{sec:prelims}
    
The purpose of this section is to introduce the notation and conventions that will be used throughout the paper. We write $\mn{n}$ for the algebra of $n\times n$ complex matrices, and denote by $\sU(\mn{n})$ the unitary group of $\mn{n}$ and the set of all $n\times n$ Hadamard unitaries in $\mn{n}$ by $\hn{n}$. The diagonal masa in $\mn{n}$ is denoted by
\[
\Delta^{(n)}:= \left\{
\begin{pmatrix}
\lambda_1 & 0 & \cdots & 0 \\
0 & \lambda_2 & \cdots & 0 \\
\vdots & \vdots & \ddots & \vdots \\
0 & 0 & \cdots & \lambda_n
\end{pmatrix}
: \lambda_1,\ldots,\lambda_n \in \C
\right\}.
\]

For matrices $A \in \mn{m \times n}$ and $B \in \mn{l \times k}$, we define their Kronecker product $A\otimes B \in \mn{ml \times nk}$ by
\[
A\otimes B:= [B_{ij}A]_{1\le i\le l,\,1\le j\le k}.
\]
Notice that, for any $A \in \mn{n}$ and $p \in \N$, we have
\[
A\otimes \mI_p
=
\mathrm{diag}(\underbrace{A, \dots, A}_{p\text{ copies}})\in \mn{pn}.
\]
Note that for two $n\times n$ matrices $A, B\in \mn{n}$, their Kronecker product $A\otimes B$ belongs to $\mn{n^2}$. Moreover, the set of such Kronecker products spans the entire algebra $\mn{n^2}$. Thus, throughout this paper, we use the following definition of the tensor product algebra:
\[
\mn{n} \otimes \mn{n}:= \spn \left\{ A\otimes B: A, B \in \mn{n} \right\}=\mn{n^2}.
\]

Finally, throughout this paper, we use the notation
\[
\sB \subseteq^E \sA \subseteq_e \subseteq \sC
\]
to indicate that \(E:\sA\to\sB\) is a conditional expectation from the \(C^*\)-algebra \(\sA\) onto \(\sB\), that \(\sC\) is the Jones basic construction associated with the inclusion \(\sB\subseteq\sA\) with respect to \(E\), and that \(e\in\sC\) is the corresponding Jones projection.

\begin{definition}[Commuting square]\label{def:comm-sq}
    Let $\sB \subseteq \sC, \sD \subseteq \sA$ be $C^*$-algebras, and let
    \[
        E_{\sB}:\sA\to\sB,\qquad
        E_{\sC}:\sA\to\sC,\qquad
        E_{\sD}:\sA\to\sD
    \]
    be conditional expectations. We say that the quadruple
    $(\sA,\sC,\sD,\sB)$ forms a \emph{commuting square with respect to
    $E_{\sB}$, $E_{\sC}$, and $E_{\sD}$} if
    \[
        E_{\sC}\circ E_{\sD}
        = E_{\sB}
        = E_{\sD}\circ E_{\sC}.
    \]
    We often represent this commuting square by the diagram
    \[
        \begin{array}{ccc}
            \sD & \subset & \sA \\
            \cup & & \cup \\
            \sB & \subset & \sC.
        \end{array}
    \]
\end{definition}

Here, we briefly recall the definition of the interior angle between $\II_1$-subfactors (see \cite{bakshi-etal-2019}).

\begin{definition}{\cite[Definition 2.2]{bakshi-etal-2019}}
  Let $(\sN \subseteq \sP, \sQ \subseteq \sM)$ be a quadruple of $\II_1$-factors. The interior angle between $\sP$ and $\sQ$ is defined by
\[
\operatorname{cos}(\alpha_{\sN}^{\sM})(\sP,\sQ)= \frac{\langle e_{\sP}- e_{\sN}, e_{\sQ}-e_{\sN}\rangle_{\tau}}{\|e_{\sP}- e_{\sN}\|_{\tau}\|e_{\sQ}- e_{\sN}\|_{\tau}},
\]
where $e_{\sP}$, $e_{\sQ}$, and $e_{\sN}$ denote the corresponding Jones projections, and $\tau$ denotes the unique faithful normal trace on $\sM$.
\end{definition}

Let $\sN\subseteq\sM$ be a unital inclusion of $\II_1$-factors. It was shown in \cite[Proposition 2.4]{bakshi-etal-2019} that, for intermediate subfactors $\sN\subseteq\sP,\sQ\subseteq\sM$, the quadruple $(\sN\subseteq\sP,\sQ\subseteq\sM)$ forms a commuting square if and only if
\[
\alpha^\sM_\sN(\sP,\sQ)=\frac{\pi}{2}.
\]

\subsection{Kadison--Kastler Distance}

The perturbation theory of operator algebras on Hilbert spaces was initiated by R.~V.~Kadison and D.~Kastler in 1972 \cite{Kadison_Kastler}. In their work, they introduced a notion of distance between operator algebras acting on a Hilbert space $\sH$. This notion, however, is more generally defined for subspaces of an arbitrary normed space. We begin by recalling the definition of the Kadison--Kastler distance for subspaces of normed spaces and then record some standard facts that will be used in the setting of subalgebras of $\Bh$.

For any normed space $\sX$, we denote its closed unit ball by $\ball{\sX}$. For a subset $\sS$ of $\sX$ and an element $x\in\sX$, the distance from $x$ to $\sS$ is defined by
\[
d(x,\sS):=\inf\{\|x-s\|:s\in\sS\}.
\]

For any two subspaces $\sY$ and $\sZ$ of a normed space $\sX$, the Kadison--Kastler distance between them, denoted by $\dkk(\sY,\sZ)$, is defined as the Hausdorff distance between their closed unit balls.

\begin{definition}[{\cite[Definition A]{Kadison_Kastler}}]
    \[
    \dkk(\sY,\sZ)
    :=\max\left\{
    \sup_{y\in\ball{\sY}}d(y,\ball{\sZ}),
    \sup_{z\in\ball{\sZ}}d(z,\ball{\sY})
    \right\}.
    \]
\end{definition}

\begin{remark}\label{KK-facts}
  Let $\sX$ be a normed space. The following facts about the Kadison--Kastler distance are well known:
  \begin{enumerate}
    \item[(i)] $0\le\dkk(\sY,\sZ)\le1$ for all subspaces $\sY,\sZ$ of $\sX$.
    \item[(ii)] $\dkk$ is a metric on the collection of all subspaces of $\sX$ (see \cite[\S~I.4]{viro-topo}).
  \end{enumerate}
\end{remark}

\noindent For further details, see \cite{Kadison_Kastler} or \cite{Ch_et_al_2010}.

\subsection{Mashood--Taylor distance}
Let $(\sM, \tau)$ be a finite von Neumann algebra. Consider it GNS Hilbert space $L^2(\sM, \tau)$. Throughout we'll denote the canonical embedding of $\sM$ into $L^2(\sM, \tau)$ by 
\[
\eta : \sM \to L^2(\sM, \tau).
\]
The inner product on $L^2(\sM, \tau)$ is given by:
\[
\langle \eta(x), \eta(y)\rangle_{\tau}:=\tau(y^*x) \quad (x, y\in \sM).
\]
Note that $\tau$ induces a bounded linear functional on $\eta(\sM)$ and thus extends to the whole of $L^2(\sM, \tau)$. We'll denote this extension again by $\tau$ and we write $\tau:L^2(\sM, \tau)\to \C$.
For any fixed $a \in \sM$, the map $L_a : L^2(\sM, \tau) \to L^2(\sM, \tau)$ given by $L_a(\eta(x)) = \eta(ax)$ extends to a bounded linear operator on $L^2(\sM, \tau)$ and $a \mapsto L_a$ gives a faithful representation of $\sM$ on $L^2(\sM, \tau)$. Similar conclusion can be done for $b\in \sM$ with $R_b(\eta(x)) = \eta(xb)$.

Let $\sN$ be a von Neumann subalgebra of $\sM$ and $E_\sN$ be the unique $\tau$-preserving conditional expectation from $\sM$ onto $\sN$. Then it can be proved that there is the following isometry:
\[
L^2(\sN, \tau) \cong \overline{\eta(\sN)}^{\|\cdot \|_\tau}\subseteq L^2(\sM, \tau).
\]
Using this we'll treat $L^2(\sN, \tau) \subseteq L^2(\sM, \tau)$. The projection in $\sB(L^2(\sM, \tau))$ with range $L^2(\sN, \tau)$ is called the Jones' projection of the inclusion $\sN \subseteq \sM$ and denoted by $e_\sN$. Furthermore, we have for all $x\in \sM$
\begin{equation}\label{eqn:jones-proj}
    e_\sN(\eta(x)) = \eta(E_\sN(x)).
\end{equation}
For more on this, see \cite{jones-1983, Pimsner_Popa_1986}.

Given a $\II_1$-factor $\sM$ with a faithful normal tracial state $\tau$ and two subfactors $\sP$ and $\sQ$, Mashood and Taylor (see \cite{Mashood-Taylor-1988}) considered the Hausdorff distance between $\ball{\sP}$ and $\ball{\sQ}$ with respect to $\|\cdot\|_{\tau}$. In this setting, Christensen \cite{christensen-79} was the first to study the distance using the trace norm. It is worth noting that the Mashood--Taylor and Christensen distances induce the same topology on the collection of tracial von Neumann subalgebras; see, for instance, \cite{Mashood-Taylor-1988, Gupta_kumar_2024}.

The Mashood--Taylor distance can, in fact, be defined in the more general setting as follows:

\begin{definition}[\cite{Mashood-Taylor-1988}]\label{def:concrete_definiton}
    For any pair $\sP, \sQ $ of subalgebras of $(\sM, \tau)$, the Mashood-Taylor distance between
$\sP$ and $\sQ$ is defined as
\begin{equation}\label{dMT-expresssion}
\dmt(\sP, \sQ) = d_{H, \|\cdot\|_\tau } \Big(\eta(\ball{\sP}), \eta(\ball{\sQ})\Big)
= \max \left \{\sup_{\xi\in \eta(\ball{P})} \dist(\xi, \eta(\ball{Q})), \sup_{\zeta\in \eta(\ball{Q})}\dist(\zeta, \eta(\ball{P}))\right\}
\end{equation}
\end{definition}

\begin{remark}
   Let $(\sM, \tau)$ be a finite von Neumann algebra. The following elementary facts are well-known; for a proof, see, for instance, \cite[Lemma 5.6]{Gupta_kumar_2024}:
\begin{enumerate}
    \item[(i)] $\dmt$ is a semi-metric on the collection of subalgebras of $\sM$.
    \item[(ii)] $\dmt$ is a metric on the collection of von Neumann subalgebras of $\sM$.
\end{enumerate}
\end{remark}

The following observation is well-known and useful for providing an equivalent definition of the Mashood--Taylor distance. We provide a proof for the sake of completeness.

\begin{prop}\label{prop:best_approximation}
 Let $(\sM, \tau)$ be a finite von Neumann algebra and $\sN$ be a von Neumann subalgebra with common unit. Then,
 \[
 \dist \Big(\eta(x), \eta(\ball{\sN})\Big)= \|\eta(x)- \eta(E_{\sN}(x))\|_{\tau} =  \dist \Big(\eta(x), L^{2}(\sN)\Big)
\]
for all $x  \in \ball{\sM}$, where $\dist \Big(\eta(x), \eta(\ball{\sN})\Big)= \inf_{z\in \ball{\sN}}\|\eta(x)-\eta(z)\|_{\tau}$.
\end{prop}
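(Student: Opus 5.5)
The plan is to sandwich the three quantities. Fix $x\in\ball{\sM}$ and establish
\[
\dist\big(\eta(x),L^2(\sN)\big)\le \dist\big(\eta(x),\eta(\ball{\sN})\big)\le \|\eta(x)-\eta(E_\sN(x))\|_\tau = \dist\big(\eta(x),L^2(\sN)\big).
\]
The first inequality is immediate, since $\eta(\ball{\sN})\subseteq\eta(\sN)\subseteq L^2(\sN)$, so the infimum over the smaller set is at least the infimum over the larger set.

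For the second inequality, use that the $\tau$-preserving conditional expectation $E_\sN$ is a unital completely positive map, hence contractive in operator norm. Thus $\|E_\sN(x)\|\le\|x\|\le1$, so $E_\sN(x)\in\ball{\sN}$. This is a competitor in the infimum defining $\dist(\eta(x),\eta(\ball{\sN}))$, which gives the bound.

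For the final equality, I would invoke the Hilbert space projection theorem. $L^2(\sN)$ is a closed subspace of $L^2(\sM,\tau)$ with orthogonal projection $e_\sN$. The distance from any vector $\xi$ to a closed subspace equals $\|\xi-e_\sN\xi\|_\tau$, and this is attained uniquely at $e_\sN\xi$. By \eqref{eqn:jones-proj}, $e_\sN\eta(x)=\eta(E_\sN(x))$, which yields the equality.

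Combining the three relations, all quantities coincide. There is essentially no obstacle here. The only point needing care is the contractivity $\|E_\sN(x)\|\le\|x\|$, which is standard from Tomiyama's theorem or from positivity, and which is exactly what ensures the $L^2$-best approximant already lies in the operator-norm unit ball.
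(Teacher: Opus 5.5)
Your proof is correct and is essentially the paper's argument: both rest on the same chain $\dist(\eta(x),\eta(\ball{\sN}))\le\|\eta(x)-\eta(E_{\sN}(x))\|_\tau=\dist(\eta(x),L^2(\sN))\le\dist(\eta(x),\eta(\ball{\sN}))$. That chain uses exactly your ingredients, namely contractivity of $E_{\sN}$, the projection theorem, and $e_{\sN}\eta(x)=\eta(E_{\sN}(x))$. The paper additionally uses completeness and convexity of $\eta(\ball{\sN})$ to get a unique minimizer and identify it with $E_{\sN}(x)$; this is not needed for the stated equality, and your uniqueness remark in $L^2(\sN)$ gives that identification anyway.
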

\begin{proof}
Let $x \in \ball{\sM}$. Since $\eta(\ball{\sN})$ is complete (see \cite[Proposition 2.6.4]{Anantharaman_Popa_1986}) and convex in $L^{2}(\sM, \tau)$, there exists
a unique $y_0 \in \ball{\sN}$ such that
\(
\|\eta(x)- \eta(y_0)\|_{\tau}= \dist (\eta(x), \eta(\ball{\sN})).
\)  We assert that   $y_0= E_{\sN}(x).$

Let $e_{\sN}: L^{2}(\sM) \rightarrow L^{2}(\sN):=
\overline{\eta(\sN})^{\|.\|_{\tau}}$ be the Jones projection. Then,
it is known that
\(
\|e_{\sN}(\eta(w))- \eta(w)\|_{\tau}= \dist (\eta(w), L^2(\sN))
\)
for all $w \in \sM$. In particular, for above $x \in \ball{\sM}$,
\[
\|\eta(E_{\sN}(x))- \eta(x)\|_{\tau}= \|e_{\sN}(\eta(x))-
\eta(x)\|_{\tau}= \dist (\eta(x), L^2(\sN)).
\]
Moreover,   $E_{\sN}(x)\in \ball{\sN}$ because   $x \in \ball{\sM}$. So, 
\[
\dist (\eta(x), \eta(\ball{\sN}))= \|\eta(x)- \eta(y_0)\|_{\tau} \leq \|\eta(x)- \eta(E_{\sN}(x))\|_{\tau}= \dist (\eta(x), L^2(\sN))\leq \dist (\eta(x), \eta(\ball{\sN})).
\]
Thus, by the uniqueness of the best approximation element $\eta(y_0)$ of
$\eta(x)$ in $\eta(\ball{\sN})$, we get \( y_0= E_{\sN}(x)\). This proves
our assertion. In particular, we also observe that 
\[
\dist (\eta(x),
\eta(\ball{\sN}))= \|\eta(x)- \eta(E_{\sN}(x))\|_{\tau}=  \dist (\eta(x), L^{2}(\sN)),
\]
as was desired.
\end{proof}

Using Proposition \ref{prop:best_approximation}, we obtain the following equivalent definition of the Mashood--Taylor distance. Throughout this article, we will use only this definition.

\begin{theorem}\label{thm:mashood_taylor}
Let $(\sM, \tau)$ be a finite von Neumann algebra. Then for any von Neumann subalgebras $\sP$ and $\sQ$, we have 
\[
\dmt(\sP, \sQ)= \max \left \{\sup_{x\in \ball{\sP}}\|\eta (x)-\eta(E_{\sQ}(x))\|_{\tau}, \sup_{z\in \ball{\sQ}}\|\eta(z)-\eta(E_{\sP}(z))\|_{\tau}\right\}
\]
where $E_{\sP}$ and $E_{\sQ}$ are the unique trace preserving conditional expectation from $\sM $ onto $\sP$ and $\sQ$ respectively.
\end{theorem}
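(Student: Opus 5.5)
This is a direct substitution of Proposition \ref{prop:best_approximation} into Definition \ref{def:concrete_definiton}. Fix von Neumann subalgebras $\sP,\sQ\subseteq\sM$. We assume, as in Proposition \ref{prop:best_approximation}, that they share the unit of $\sM$, so that the trace-preserving conditional expectations $E_{\sP}$ and $E_{\sQ}$ exist. Definition \ref{def:concrete_definiton} writes $\dmt(\sP,\sQ)$ as the maximum of two one-sided suprema. The first is
\[
\sup_{\xi\in\eta(\ball{\sP})}\dist\bigl(\xi,\eta(\ball{\sQ})\bigr),
\]
and the second is the analogous quantity with $\sP$ and $\sQ$ interchanged. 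By symmetry it suffices to identify the first one-sided supremum with $\sup_{x\in\ball{\sP}}\|\eta(x)-\eta(E_{\sQ}(x))\|_{\tau}$.

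First, every $\xi\in\eta(\ball{\sP})$ has the form $\xi=\eta(x)$ for some $x\in\ball{\sP}$, so we may reparametrize the supremum over $x\in\ball{\sP}$. Second, since $\sP\subseteq\sM$ and the operator norm on $\sP$ is the restriction of that on $\sM$, we have $\ball{\sP}\subseteq\ball{\sM}$. Proposition \ref{prop:best_approximation}, applied with $\sN=\sQ$, therefore gives, for each such $x$,
\[
\dist\bigl(\eta(x),\eta(\ball{\sQ})\bigr)=\|\eta(x)-\eta(E_{\sQ}(x))\|_{\tau}.
\]
Taking the supremum over $x\in\ball{\sP}$ yields the first term of the claimed formula. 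Interchanging the roles of $\sP$ and $\sQ$ gives the second term, and taking the maximum gives the theorem.

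There is essentially no obstacle here. The only points to check are that $\eta$ maps $\ball{\sP}$ onto $\eta(\ball{\sP})$, which holds by definition, and that the hypothesis $x\in\ball{\sM}$ of Proposition \ref{prop:best_approximation} is satisfied, which follows from the inclusion of unit balls noted above. Neither the faithfulness of $\eta$ nor the uniqueness of the minimizer is needed, since only the value of the distance enters the formula.
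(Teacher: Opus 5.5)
Your proposal is correct and follows the paper's own route. The paper obtains this theorem directly by substituting Proposition \ref{prop:best_approximation} into Definition \ref{def:concrete_definiton}, just as you do, using $\ball{\sP}\subseteq\ball{\sM}$ and the symmetry between $\sP$ and $\sQ$.
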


The following proposition shows the relationship between the Mashood--Taylor distance and the Kadison--Kastler distance.
\begin{prop}\label{prop:mt_and_kk} 
Let $(\sM, \tau)$ be a finite von Neumann algebra. Then for any two subalgebras  $\sP$ and $\sQ$ of $\sM$,
 \[
 \dmt (\sP, \sQ) \leq \dkk(\sP, \sQ).
 \]
\end{prop}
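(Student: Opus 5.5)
The plan is to compare the two Hausdorff distances pointwise. The basic fact is that the $\|\cdot\|_\tau$-norm is dominated by the operator norm: for every $x\in\sM$,
\[
\|\eta(x)\|_\tau^2=\tau(x^*x)\le \|x^*x\|\,\tau(1)=\|x\|^2,
\]
since $\tau$ is a state. So $\|\eta(x)\|_\tau\le\|x\|$.

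Next I fix $x\in\ball{\sP}$ and take any $z\in\ball{\sQ}$. The domination gives
\[
\|\eta(x)-\eta(z)\|_\tau\le\|x-z\|.
\]
Taking the infimum over $z\in\ball{\sQ}$ on both sides yields
\[
\dist\big(\eta(x),\eta(\ball{\sQ})\big)\le d(x,\ball{\sQ}),
\]
where the right-hand side is the operator-norm distance appearing in the definition of $\dkk$. Taking the supremum over $x\in\ball{\sP}$ gives
\[
\sup_{x\in\ball{\sP}}\dist\big(\eta(x),\eta(\ball{\sQ})\big)\le \sup_{x\in\ball{\sP}} d(x,\ball{\sQ}).
\]
The same argument with the roles of $\sP$ and $\sQ$ swapped gives the symmetric inequality.

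Taking the maximum of the two one-sided inequalities and comparing with Definition \ref{def:concrete_definiton} and the definition of $\dkk$ yields $\dmt(\sP,\sQ)\le\dkk(\sP,\sQ)$. Note that here $\dkk$ is computed inside the normed space $\sM$ with the operator norm, and $\dmt$ uses the same unit balls $\ball{\sP}$ and $\ball{\sQ}$ in operator norm.

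No step is a real obstacle. The only point needing care is that the unit balls in both definitions are the operator-norm unit balls. Because of this, the sets over which the infima and suprema range are the same on both sides, and only the norm used for measuring changes. This is also why the argument needs no conditional expectations or completeness, and it applies to arbitrary subalgebras, not just von Neumann ones.
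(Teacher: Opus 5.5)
Your proof is correct and takes the same route as the paper. The paper's proof simply cites the inequality $\|\eta(x)\|_\tau\le\|x\|_{\mathrm{op}}$. Your write-up adds the routine infimum, supremum and maximum steps, and correctly notes that both distances use the same operator-norm unit balls.
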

\begin{proof}
    The proof follows from the following well-known fact that for any $x\in \sM$,
    \[
    \|\eta(x)\|_{\tau}\leq \|x\|_{op}.
    \]
\end{proof}

    \section{Distances and Interior Angles of Spin Model Subfactors}
    \label{sec:spin-model}

In this section, we investigate the relative geometric properties and positioning of spin model subfactors constructed from Hadamard unitaries (see \cite{jones-sunder, nicoara2010, BG2025-1}). Spin model subfactors provide a fascinating and rich family of inclusions within the ambient hyperfinite $\mathrm{II}_1$ factor, $\kR$. When working with such an abundance of subfactors, a natural geometric question arises: how can we rigorously quantify the "distance" or relative orientation between two distinct subfactors within this continuous family?

To answer this, we first establish the foundational framework by detailing the canonical Jones towers and their unitarily twisted counterparts, showing exactly how these subfactors sit inside $\kR$. Building on the Bakshi--Guin equivalence (see {\cite[\S4]{BG2025-1}}), we then specialize to the $2\times 2$ case, where the entire family of spin model subfactors can be elegantly parameterized by a single continuous variable $\alpha \in [0, \pi)$. 

The primary purpose of this section is to explicitly compute the Mashood--Taylor distance, $\dmt$, between any two distinct $2\times 2$ spin model subfactors, $\kR_{\sfH_{\alpha}}$ and $\kR_{\sfH_{\beta}}$. Strikingly, we will prove that this distance is governed by a beautifully simple trigonometric identity (see Theorem \ref{thm:mt-distance-2by2}). Finally, we explore the structural consequences of this calculation, directly linking the Mashood--Taylor distance to the interior angle between the subfactors (see Corollary \ref{cor:dmt-is-same-alpha-2by2}), and demonstrating that a maximal distance of $1$ exactly characterizes when the subfactors and their intersection form a commuting square (see Corollary \ref{cor:comm-sq-d-1-2times2}).

We begin with a few lemmas concerning Jones' basic constructions. Let $\tau_{k-1}$ and $\tau_1$ denote the normalized traces on $\mn{n}^{\otimes (k-1)}$ and $\mn{n}$ respectively, so that $\tau_k = \tau_{k-1} \otimes \tau_1$.

\begin{lem}\label{lem:jones-proj-mn-gen}
    Let $\eta_k : \mn{n}^{\otimes k} \to L^2(\mn{n}^{\otimes k}, \tau_k)$ be the canonical inclusion mapping the algebra into its GNS Hilbert space. The Jones projection $e_k$ decomposes as $\operatorname{id}_{L^2(\mn{n}^{\otimes (k-1)})} \otimes e_1$, where $e_1$ is the Jones projection for the base case $\C\cong \C \mI_n \subseteq \mn{n}$.
\end{lem}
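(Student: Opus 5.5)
The plan is to identify $e_k$ as the Jones projection of the inclusion $\mn{n}^{\otimes (k-1)}\otimes \C\mI_n \subseteq \mn{n}^{\otimes k}$, i.e.\ the projection onto $L^2(\mn{n}^{\otimes(k-1)}\otimes\C\mI_n)$. By \eqref{eqn:jones-proj}, $e_k(\eta_k(x)) = \eta_k(E(x))$, where $E$ is the unique $\tau_k$-preserving conditional expectation onto that subalgebra. The proof therefore reduces to two things: computing $E$ explicitly, and realizing the GNS space of the tensor product as a Hilbert space tensor product.

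\textbf{Step 1 (the Hilbert space).} Since $\tau_k=\tau_{k-1}\otimes\tau_1$, the map
\[
\eta_k(a\otimes b)\mapsto \eta_{k-1}(a)\otimes\eta_1(b)
\]
preserves inner products on elementary tensors, because $\tau_k((a'\otimes b')^*(a\otimes b))=\tau_{k-1}(a'^*a)\,\tau_1(b'^*b)$. Elementary tensors span $\mn{n}^{\otimes k}$, so this map extends to a unitary
\[
L^2(\mn{n}^{\otimes k},\tau_k)\cong L^2(\mn{n}^{\otimes(k-1)},\tau_{k-1})\otimes L^2(\mn{n},\tau_1).
\]
Everything is finite-dimensional, so no closure issues arise.

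\textbf{Step 2 (the conditional expectation).} I claim $E=\operatorname{id}\otimes\tau_1(\cdot)\mI_n$, that is, $E(a\otimes b)=\tau_1(b)\,a\otimes \mI_n$. To verify this, check that the map is unital, positive (indeed it is the slice map of a state), idempotent, and bimodular over $\mn{n}^{\otimes(k-1)}\otimes\C\mI_n$, and that it preserves $\tau_k$. Uniqueness of the trace-preserving conditional expectation then identifies it with $E$. The base case $k=1$ gives $E_{\C}(b)=\tau_1(b)\mI_n$, so $e_1\eta_1(b)=\tau_1(b)\eta_1(\mI_n)$, which is the projection onto $\C\eta_1(\mI_n)$.

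\textbf{Step 3 (conclusion).} For $x=a\otimes b$,
\[
e_k\eta_k(a\otimes b)=\eta_k(\tau_1(b)\,a\otimes\mI_n)\mapsto \eta_{k-1}(a)\otimes \tau_1(b)\eta_1(\mI_n)=(\operatorname{id}\otimes e_1)(\eta_{k-1}(a)\otimes\eta_1(b)).
\]
Both sides are linear and bounded, and they agree on a spanning set, so $e_k=\operatorname{id}\otimes e_1$ under the identification of Step 1.

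The only real subtlety is interpretive: one has to fix that $e_k$ means the Jones projection for $\mn{n}^{\otimes(k-1)}\subseteq\mn{n}^{\otimes k}$, with the last tensor factor being the one killed. Step 2 is the key check, and it is routine once the slice map is written down.
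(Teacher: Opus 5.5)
Your proposal is correct and follows the same route as the paper: identify $L^2(\mn{n}^{\otimes k},\tau_k)$ with $L^2(\mn{n}^{\otimes(k-1)},\tau_{k-1})\otimes L^2(\mn{n},\tau_1)$ via $\eta_k(a\otimes b)\mapsto\eta_{k-1}(a)\otimes\eta_1(b)$, write the trace-preserving expectation as the slice map $a\otimes b\mapsto\tau_1(b)\,a\otimes\mI_n$, and verify $e_k=\operatorname{id}\otimes e_1$ on elementary tensors. Beyond the paper's argument, you also justify that the identification is unitary and derive the formula for the expectation from uniqueness of the $\tau_k$-preserving conditional expectation; the paper simply asserts both as standard.
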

\begin{proof}
    By the standard properties of the GNS construction for tensor products of matrix algebras, the Hilbert space factorizes up to unitary equivalence
    $$L^2\left(\mn{n}^{\otimes k}, \tau_k\right) \cong L^2\left(\mn{n}^{\otimes (k-1)}, \tau_{k-1}\right) \otimes L^2(\mn{n}, \tau_1).$$
    
    Under this identification, for any elementary tensor $x \otimes y \in \mn{n}^{\otimes (k-1)} \otimes \mn{n}$, the canonical inclusion factors as
    $$\eta_k(x \otimes y) = \eta_{k-1}(x) \otimes \eta_1(y).$$
    
    The trace-preserving conditional expectation $E_k : \mn{n}^{\otimes (k-1)} \otimes \mn{n} \to \mn{n}^{\otimes (k-1)} \otimes \mathbb{C}$ operates by taking the trace of the second tensor factor, i.e.
    $$E_k(x \otimes y) = x \otimes \tau_1(y)\mI_n.$$
    
    where $\mI_n$ is the identity matrix in $\mn{n}$.
    
    By definition, the Jones projection $e_k$ is the orthogonal projection onto the closed subspace generated by $\eta_k\left(\mn{n}^{\otimes (k-1)}\right)$. Its action on the dense subspace $\eta_k\left(\mn{n}^{\otimes k}\right)$ is characterized by $e_k \eta_k(m) = \eta_k(E_k(m))$ for all $m \in \mn{n}^{\otimes k}$. Applying this to elementary tensors yields
    $$e_k(\eta_{k-1}(x) \otimes \eta_1(y)) = \eta_k(E_k(x \otimes y)) = \eta_k(x \otimes \tau_1(y)\mI_n) = \eta_{k-1}(x) \otimes \eta_1(\tau_1(y)\mI_n).$$
    
    Observe that the action leaves the first tensor factor $\eta_{k-1}(x)$ completely invariant. On the second factor, the map $\eta_1(y) \mapsto \eta_1(\tau_1(y)\mI_n)$ is precisely the action of the Jones projection $e_1$ corresponding to the conditional expectation $E_1 : \mn{n} \to \mathbb{C}$ from the base case. Therefore, as an operator on $L^2\left(\mn{n}^{\otimes (k-1)}, \tau_{k-1}\right) \otimes L^2(\mn{n}, \tau_1)$, we have $e_k = \operatorname{id}_{L^2\left(\mn{n}^{\otimes (k-1)}, \tau_{k-1} \right)} \otimes e_1$. 
\end{proof}

\begin{lem}\label{lem:hyperfinite-ii-1-basic-cons}
    Let $k \ge 1$ be an integer. Consider the unital inclusion of finite-dimensional von Neumann algebras $\mn{n}^{\otimes (k-1)}\subseteq \mn{n}^{\otimes k}$, formed by the canonical identification $\mn{n}^{\otimes (k-1)} \cong \mn{n}^{\otimes (k-1)} \otimes \mathbb{C} \subseteq \mn{n}^{\otimes (k-1)} \otimes \mn{n}$. Let $\tau_k$ be the canonical normalized trace on $\mn{n}^{\otimes k}$, and $E_k: \mn{n}^{\otimes k} \to \mn{n}^{\otimes (k-1)}$ be the unique $\tau_k$-preserving conditional expectation. Then the Jones basic construction $\langle \mn{n}^{\otimes k}, e_k \rangle$ is spatially isomorphic to $\mn{n}^{\otimes (k+1)}$.
\end{lem}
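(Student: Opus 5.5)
We will identify the basic construction concretely inside $\sB(L^2(\mn{n}^{\otimes k},\tau_k))$ and show that the von Neumann algebra $\langle \mn{n}^{\otimes k}, e_k\rangle$ coincides with a copy of $\mn{n}^{\otimes(k-1)}\otimes \sB(L^2(\mn{n},\tau_1))\cong \mn{n}^{\otimes(k-1)}\otimes\mn{n^2}=\mn{n}^{\otimes(k+1)}$. Throughout we use the factorization
\[
L^2(\mn{n}^{\otimes k},\tau_k)\cong L^2(\mn{n}^{\otimes(k-1)},\tau_{k-1})\otimes L^2(\mn{n},\tau_1)
\]
from the proof of Lemma \ref{lem:jones-proj-mn-gen}, under which left multiplication by $x\otimes y$ becomes $L_x\otimes L_y$ and $e_k=\operatorname{id}\otimes e_1$.

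\emph{Step 1 (reduction to the base case).} Since $\langle \mn{n}^{\otimes k},e_k\rangle$ is generated by $L(\mn{n}^{\otimes(k-1)})\otimes L(\mn{n})$ together with $\operatorname{id}\otimes e_1$, it equals $L(\mn{n}^{\otimes(k-1)})\otimes \langle L(\mn{n}), e_1\rangle$. So it suffices to show that $\langle L(\mn{n}),e_1\rangle=\sB(L^2(\mn{n},\tau_1))$, i.e.\ the basic construction of $\C\subseteq \mn{n}$ is all of $\sB(L^2(\mn{n}))\cong \mn{n^2}$.

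\emph{Step 2 (base case).} The projection $e_1$ is the rank-one projection onto $\C\eta_1(\mI_n)$. For $a,b\in\mn{n}$, $L_a e_1 L_{b^*}$ is the rank-one operator $\xi\mapsto \langle \xi,\eta_1(b)\rangle\,\eta_1(a)$. Since $\eta_1$ is onto the finite-dimensional space $L^2(\mn{n})$, these rank-one operators span all of $\sB(L^2(\mn{n}))$. Hence $\langle L(\mn{n}),e_1\rangle=\sB(L^2(\mn{n}))$, which has dimension $n^4$ and is $\cong\mn{n^2}=\mn{n}\otimes\mn{n}$. (Alternatively, one can invoke the general fact that the basic construction equals $J\sB' J$ with $\sB=\C$, giving $J\C J'=\sB(L^2)$.)

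\emph{Step 3 (assembling).} Combining the two steps gives
\[
\langle \mn{n}^{\otimes k},e_k\rangle = L(\mn{n}^{\otimes (k-1)})\otimes \sB(L^2(\mn{n}))\cong \mn{n}^{\otimes(k-1)}\otimes\mn{n}\otimes\mn{n}=\mn{n}^{\otimes(k+1)}.
\]
The identification is implemented by the unitary $L^2(\mn{n}^{\otimes k})\cong L^2(\mn{n}^{\otimes (k-1)})\otimes \C^{n}\otimes\C^{n}$ (using $L^2(\mn{n})\cong\C^n\otimes\C^n$), which makes the isomorphism spatial. Under it, $\mn{n}^{\otimes k}$ sits as the first $k$ tensor factors acting on the left.

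The only delicate point is bookkeeping. We must check that the copy of $\mn{n}^{\otimes k}$ obtained this way is compatible with the standard embedding $\mn{n}^{\otimes k}\subseteq\mn{n}^{\otimes(k+1)}$ used later. Concretely, $L(\mn{n})$ on $L^2(\mn{n})\cong \C^n\otimes\overline{\C^n}$ acts as $\mn{n}\otimes \mI_n$, so the inclusion becomes $x\mapsto x\otimes\mI_n$, as expected. A quicker alternative check is the index: the commutant of $\mn{n}^{\otimes k}$ on its $L^2$ is $J\mn{n}^{\otimes k}J$, and $\langle \mn{n}^{\otimes k},e_k\rangle=J(\mn{n}^{\otimes(k-1)})'J$ is a factor of dimension $n^{2(k+1)}$, hence a full matrix algebra $\mn{n^{k+1}}$.
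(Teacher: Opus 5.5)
Your proposal is correct and follows essentially the same route as the paper: factor $L^2(\mathbb{M}_n^{\otimes k})$ as $L^2(\mathbb{M}_n^{\otimes(k-1)})\otimes L^2(\mathbb{M}_n)$ with $e_k=\operatorname{id}\otimes e_1$, reduce to $\mathbb{M}_n^{\otimes(k-1)}\otimes\langle \mathbb{M}_n,e_1\rangle$, and identify the base case with $\mathbb{M}_n\otimes\mathbb{M}_n$. The only difference is that you prove the base case directly, by showing that the rank-one operators $L_a e_1 L_{b^*}$ span $\mathcal{B}(L^2(\mathbb{M}_n))$, whereas the paper cites the authors' earlier computation of the basic construction for $\mathbb{C}\subseteq\mathbb{M}_n$.
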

\begin{proof}
Let $\sM:= \mn{n}^{\otimes k}$ and $\sA := \mn{n}^{\otimes (k-1)}$
    The Jones basic construction algebra, denoted $\langle \sM, e_k \rangle$, is the von Neumann algebra acting on $L^2\left(\sM, \tau_k\right)$ generated by $\sM$ (acting by left multiplication) and the Jones projection $e_k$.
    
    From the factorization $\sM = \sA \otimes \mn{n}$ and Lemma \ref{lem:jones-proj-mn-gen}, we can write the generators of the basic construction as
    $$\langle \sM, e_k \rangle = \left( (\sA \otimes \mn{n}) \cup \{ \operatorname{id}_{L^2(\sA)} \otimes e_1 \} \right)''.$$
    
    Because the operators in $\sA$ act strictly on the first Hilbert space factor $L^2(\sA, \tau_{k-1})$, and the operators in $\mn{n}$ along with the projection $e_1$ act strictly on the second Hilbert space factor $L^2(\mn{n}, \tau_1)$. Thus, we have
    $$\langle \sM, e_k \rangle \cong \sA \otimes \langle \mn{n}, e_1 \rangle.$$
   
    Now the basic construction for the inclusion $\mathbb{C} \subseteq \mn{n}$ (see \cite[Theorem 5.1]{ghoshkumar-kk2026}) yields:
    $$\langle \mn{n}, e_1 \rangle \cong \mn{n} \otimes \mn{n}.$$
    
    Hence, we get
    $$\langle \sM, e_k \rangle \cong \mn{n}^{\otimes (k-1)} \otimes (\mn{n} \otimes \mn{n})\cong \mn{n}^{\otimes (k+1)}.$$
    
    This completes the proof.
\end{proof}

\begin{lem}\label{lem:index-invariance}
 For each $k \ge 0$, consider the inclusion
$\mn{n}^{\otimes k} \subseteq^{E_k} \mn{n}^{\otimes (k+1)}$,
where $E_k$ denotes the canonical trace-preserving conditional expectation, characterized by
\(    \tau_k \circ E_k = \tau_{k+1}. 
   \)   
   Then $\operatorname{Ind}(\widetilde{E_k})= \operatorname{Ind}(E_0)$ for each $k$, where $\widetilde{E_k}$ is the Watatani dual coditional expectation corresponding to $E_k$.
\end{lem}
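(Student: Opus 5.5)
We plan to compute both indices explicitly as scalars, using Watatani's index for finite-dimensional inclusions. For $E_0:\mn{n}\to\C$, the normalized trace $\tau_1$, the quasi-basis $\{\sqrt{n}\,e_{ij}\}_{i,j}$ (matrix units) satisfies
\[
\sum_{i,j} \sqrt{n}e_{ij}\,\tau_1\bigl((\sqrt{n}e_{ij})^*x\bigr)=\sum_{i,j} n\,e_{ij}\tfrac1n x_{ij}=x,
\]
so $\operatorname{Ind}(E_0)=\sum_{i,j} n\,e_{ij}e_{ji}=n\sum_{i}\sum_j e_{ii}=n^2\mI_n$. Thus the claim is that $\operatorname{Ind}(\widetilde{E_k})=n^2$ for every $k$.

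The first step is to compute $\operatorname{Ind}(E_k)$. Write $\mn{n}^{\otimes(k+1)}=\mn{n}^{\otimes k}\otimes\mn{n}$ as in Lemma~\ref{lem:jones-proj-mn-gen}, so that $E_k=\operatorname{id}\otimes\tau_1$. The family $\{\mI\otimes\sqrt{n}e_{ij}\}$ is then a quasi-basis for $E_k$: the reconstruction identity holds on elementary tensors $x\otimes y$ by the computation above, and extends by linearity. Hence $\operatorname{Ind}(E_k)=\mI\otimes n^2\mI_n=n^2$.

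The second step passes to the dual expectation. By Lemma~\ref{lem:hyperfinite-ii-1-basic-cons}, the basic construction is $\mn{n}^{\otimes(k+2)}$ with Jones projection $e_k=\operatorname{id}\otimes e_1$. Watatani's dual expectation $\widetilde{E_k}:\langle\mn{n}^{\otimes(k+1)},e_k\rangle\to\mn{n}^{\otimes(k+1)}$ is characterized by $\widetilde{E_k}(xe_ky)=\operatorname{Ind}(E_k)^{-1}xy$. Watatani's general result gives $\operatorname{Ind}(\widetilde{E_k})=\operatorname{Ind}(E_k)$ whenever the index is a scalar, which is automatic here by the first step. To be concrete, we will also check this directly: if $\{u_i\}$ is a quasi-basis for $E_k$, then $\{\sqrt{\operatorname{Ind}(E_k)}\,u_ie_k\}$ is a quasi-basis for $\widetilde{E_k}$, and
\[
\sum_i \operatorname{Ind}(E_k)\,u_ie_ku_i^*=\operatorname{Ind}(E_k)\cdot 1,
\]
using $\sum_i u_ie_ku_i^*=1$ in the basic construction. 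Combining the two steps gives $\operatorname{Ind}(\widetilde{E_k})=n^2=\operatorname{Ind}(E_0)$.

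The main obstacle is the quasi-basis verification for $\widetilde{E_k}$, namely showing $\sum_i u_ie_ku_i^*=1$ and that $\widetilde{E_k}$ is correctly normalized. For this we will use the tensor factorization: the relation reduces to the identity $\sum_{i,j} n\,e_{ij}e_1e_{ji}=1$ in $\langle\mn{n},e_1\rangle$, which can be checked directly on $L^2(\mn{n},\tau_1)$, since $e_1$ is the rank-one projection onto $\eta_1(\mI_n)$.
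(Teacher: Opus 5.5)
Your argument is correct and uses the same fact the paper relies on: Watatani's Proposition 2.3.4, which gives $\operatorname{Ind}(\widetilde{E})=\operatorname{Ind}(E)$ for scalar index via the quasi-basis $\{\operatorname{Ind}(E)^{1/2}u_ie\}$; the paper's proof simply cites it in one line. Your explicit quasi-basis $\{\mI\otimes\sqrt{n}\,e_{ij}\}$ also supplies the step $\operatorname{Ind}(E_k)=n^2=\operatorname{Ind}(E_0)$, which the bare citation leaves implicit; in the paper this would have to come inductively from $\widetilde{E_k}=E_{k+1}$, and Lemma~\ref{lem:dual-exp-mn} proves that identity using this very lemma.
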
 
\begin{proof}
    It follows from \cite[Proposition 2.3.4]{watatani1990}.
\end{proof}

\begin{lem}\label{lem:dual-exp-mn}
For each $k \ge 1$, consider the inclusion
$\mn{n}^{\otimes k} \subseteq^{E_k} \mn{n}^{\otimes (k+1)}$,
where $E_k$ denotes the canonical trace-preserving conditional expectation, characterized by
\(    \tau_{k+1} \circ E_k = \tau_{k+1}.
   \)
Then the Watatani dual conditional expectation $\widetilde{E_k}$ preserves the canonical normalized trace $\tau_{k+2}$. That is, $\tau_{k+1} \circ \tilde{E}_k = \tau_{k+2}$.
\end{lem}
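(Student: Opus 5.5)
Our approach is to use Watatani's description of the dual conditional expectation for finite-index inclusions of $C^*$-algebras. For $\sB\subseteq^E\sA$ with quasi-basis $\{(u_i,u_i^*)\}$, the basic construction $\sC=\langle \sA,e\rangle$ is spanned by elements $xey$ with $x,y\in\sA$. The dual expectation $\widetilde{E}:\sC\to\sA$ is determined by
\[
\widetilde{E}(xey)=\operatorname{Ind}(E)^{-1}xy,\qquad x,y\in\sA .
\]
We apply this with $\sB=\mn{n}^{\otimes k}$, $\sA=\mn{n}^{\otimes(k+1)}$ and $E=E_k$. By Lemma \ref{lem:hyperfinite-ii-1-basic-cons}, $\sC\cong\mn{n}^{\otimes(k+2)}$. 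Since $\mn{n}^{\otimes (k+2)}$ is a factor, it carries a unique tracial state, namely $\tau_{k+2}$. It therefore suffices to show that $\tau_{k+1}\circ\widetilde{E_k}$ is a tracial state on $\sC$.

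The functional is clearly positive, since it is a composition of positive maps, and it is unital because $\widetilde{E_k}(1)=1$. So the key step is traciality. By linearity it is enough to check $\varphi(ab)=\varphi(ba)$ for $\varphi:=\tau_{k+1}\circ\widetilde{E_k}$ and $a=xey$, $b=zew$ with $x,y,z,w\in\sA$. Using $eye=E_k(y)e$, we get
\[
ab=xE_k(yz)ew,\qquad ba=zE_k(wx)ey .
\]
Applying the formula for $\widetilde{E_k}$ gives
\[
\varphi(ab)=\operatorname{Ind}(E_k)^{-1}\tau_{k+1}\bigl(xE_k(yz)w\bigr)
=\operatorname{Ind}(E_k)^{-1}\tau_{k+1}\bigl(E_k(wx)\,E_k(yz)\bigr).
\]
The second equality uses traciality of $\tau_{k+1}$, the bimodule property of $E_k$ over $\sB$, and $\tau_{k+1}\circ E_k=\tau_{k+1}$. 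The same manipulation gives $\varphi(ba)=\operatorname{Ind}(E_k)^{-1}\tau_{k+1}(E_k(yz)E_k(wx))$. These two expressions agree because $\tau_{k+1}$ is tracial on $\sB$. Uniqueness of the trace then yields $\tau_{k+1}\circ\widetilde{E_k}=\tau_{k+2}$.

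The main obstacle is bookkeeping rather than substance. We must check that elements $xey$ really span $\sC$; this holds by Watatani since the index is finite, with $\operatorname{Ind}(E_k)=n^2$ by Lemma \ref{lem:index-invariance}. We must also check that the isomorphism of Lemma \ref{lem:hyperfinite-ii-1-basic-cons} matches the abstract $\sC$, so that "the canonical trace $\tau_{k+2}$" is meaningful. Uniqueness of the tracial state on a full matrix algebra sidesteps any need to track that isomorphism explicitly.

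We could also verify directly that $\tau_{k+2}(e)=\operatorname{Ind}(E_k)^{-1}=\tau_{k+1}(\widetilde{E_k}(e))$ as a sanity check of the normalization. This is not needed, since both functionals are unital and unitality already pins down the normalization.
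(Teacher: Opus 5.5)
Your proof is correct, but it takes a different route from the paper's. The paper checks the identity directly on the spanning elements $xe_{k+1}y$. It evaluates $\tau_{k+1}(\widetilde{E_k}(xe_{k+1}y))=n^{-2}\tau_{k+1}(xy)$ from Watatani's formula. It then computes $\tau_{k+2}(xe_{k+1}y)=\tau_{k+2}(yxe_{k+1})$ by hand, using three facts: the tensor form $e_{k+1}=\mathbb{I}_{n^k}\otimes e_1$ of Lemma \ref{lem:jones-proj-mn-gen}, the relation $e_1(b\otimes\mathbb{I}_n)e_1=\tau_1(b)e_1$, and the fact that $e_1$ is a rank-one projection in $\mn{n}\otimes\mn{n}$, so $\tau_2(e_1)=n^{-2}$. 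In effect the paper proves the Markov property $\tau_{k+2}(we_{k+1})=n^{-2}\tau_{k+1}(w)$ explicitly and matches it with the index.

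You instead show that $\tau_{k+1}\circ\widetilde{E_k}$ is a tracial state, and your computation with $eye=E_k(y)e$ and the bimodule property is right. You then appeal to uniqueness of the tracial state on the full matrix algebra $\mn{n}^{\otimes(k+2)}$. This frees you from the explicit model: you never need $\tau(e_{k+1})$, the concrete position of $\mn{n}^{\otimes(k+1)}$ inside $\mn{n}^{\otimes(k+2)}$, or the numerical value of the index. The Markov property then comes out as a consequence, $\tau_{k+2}(we_{k+1})=\tau_{k+1}(\widetilde{E_k}(we_{k+1}))=\operatorname{Ind}(E_k)^{-1}\tau_{k+1}(w)$. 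Your argument also applies verbatim to any trace-preserving expectation of finite scalar index whose basic construction has a unique tracial state. The paper's computation, in turn, is purely matricial and does not invoke uniqueness of the trace.

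One small remark on your citation. What you actually use is that $\operatorname{Ind}(E_k)$ is a scalar, so that you can pull it out of $\tau_{k+1}$. This holds because the index is central in the factor $\mn{n}^{\otimes(k+1)}$, or directly from the quasi-basis $\{\sqrt{n}(1\otimes E_{ij})\}$ of $E_k=\operatorname{id}\otimes\tau_1$, which gives $\operatorname{Ind}(E_k)=n^2$. Lemma \ref{lem:index-invariance}, as stated, concerns $\operatorname{Ind}(\widetilde{E_k})$ rather than $\operatorname{Ind}(E_k)$, so it is not quite the right reference for that value.
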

\begin{proof}
    Let $M_k = \mathbb{M}_n^{\otimes k}$. The inclusion $M_k \subseteq M_{k+1}$ is canonically identified as $x \mapsto x \otimes \mathbb{I}_n$.

    The canonical trace-preserving conditional expectation $E_k: \mathbb{M}_n^{\otimes (k+1)} \to \mathbb{M}_n^{\otimes k}$ is given by 
    $$E_k(x \otimes y) = x \otimes \tau_1(y)\mathbb{I}_n$$
   for $x \in \mathbb{M}_n^{\otimes k}$, $y \in \mathbb{M}_n$ and $M_k$ is identified with $M_k \otimes \mathbb{C}\mathbb{I}_n$. 
    
Since elements of the form $x e_{k+1} y$ span $M_{k+2}$ for all $x,y \in M_{k+1}$, it suffices to verify the trace-preserving property on these elements. First, let us evaluate the left-hand side. Applying $\tau_{k+1}$ to the definition of $\tilde{E}_k$ and using Lemma \ref{lem:index-invariance} and trace preserving property of $E_k$, we get
    $$\tau_{k+1}(\tilde{E}_k(x e_{k+1} y)) =  \tau_{k+1}\left((\operatorname{Ind}(E_0))^{-1}E_{k}(xy)\right)=\tau_{k+1}\left(\frac{1}{n^2} x y\right) = \frac{1}{n^2} \tau_{k+1}(xy).$$

    Now, let us evaluate the right-hand side, $\tau_{k+2}(x e_{k+1} y)$. Using the trace property of $\tau_{k+2}$, we get $$\tau_{k+2}(x e_{k+1} y) = \tau_{k+2}(yx e_{k+1}).$$

    We now evaluate the trace of $w e_{k+1}$ for any $w \in M_{k+1}$. The $M_{k+2}$ can be decomposes as $\mathbb{M}_n^{\otimes k} \otimes (\mathbb{M}_n \otimes \mathbb{M}_n)$, and by the Lemma \ref{lem:jones-proj-mn-gen} the projection is $e_{k+1} = \mathbb{I}_{n^k} \otimes e_1$. A general element $w \in M_{k+1}$ takes the form $a \otimes (b \otimes \mathbb{I}_n)$.

    Thus, $w e_{k+1}$ decomposes over the tensor product as $a \otimes ((b \otimes \mathbb{I}_n)e_1)$. Evaluating the trace yields:
    $$\tau_{k+2}(w e_{k+1}) = \tau_k(a) \cdot \tau_2((b \otimes \mathbb{I}_n)e_1).$$

    Using the trace property in $M_2$ and the fact that $e_1$ is a projection ($e_1^2 = e_1$), we get
    $$\tau_2((b \otimes \mathbb{I}_n)e_1) = \tau_2(e_1 (b \otimes \mathbb{I}_n) e_1).$$
    
    By the defining property of the Jones projection, $e_1 (b \otimes \mathbb{I}_n) e_1 = E_1(b) e_1 = \tau_1(b)e_1$. Therefore
    $$\tau_2(e_1 (b \otimes \mathbb{I}_n) e_1) = \tau_2(\tau_1(b)e_1) = \tau_1(b)\tau_2(e_1).$$

    Because $e_1$ is a rank-1 projection in $\mathbb{M}_n \otimes \mathbb{M}_n \cong \mathbb{M}_{n^2}$, its normalized trace is $\frac{1}{n^2}$. Substituting this back gives:
    $$\tau_{k+2}(w e_{k+1}) = \tau_k(a) \tau_1(b) \frac{1}{n^2} = \tau_{k+1}(w) \frac{1}{n^2}.$$

    Letting $w = yx$, we obtain,
    $$\tau_{k+2}(x e_{k+1} y) = \frac{1}{n^2} \tau_{k+1}(yx)= \tau_{k+1}(xy).$$
    Thus, we get
    $$\tau_{k+1}(\tilde{E}_k(x e_{k+1} y)) = \tau_{k+2}(x e_{k+1} y).$$
    This completes the proof!
\end{proof}

\begin{prop}\label{prop:jones-tower-for-hyperfin-ii-1}
Consider the unital inclusion $\C\mI_n \subseteq^{E_0} \mn{n}$, where $E_0 = \frac{1}{n}\operatorname{Tr} \otimes \mI_n.$ The infinite Jones tower obtained by iteratively applying the Jones basic construction is given by the sequence
\[
\C \subseteq^{E_0}_{e_1} \mn{n}
\subseteq^{E_1}_{e_2} \mn{n}^{\otimes 2}
\subseteq \cdots
\subseteq \mn{n}^{\otimes k}
\subseteq^{E_k}_{e_{k+1}} \mn{n}^{\otimes (k+1)}
\subseteq \cdots,
\]
where, for each $k \geq 0$, $E_k$ denotes the canonical trace-preserving conditional expectation and the corresponding Jones projection is given by $e_{k+1}
=
\operatorname{id}_{L^2(\mn{n}^{\otimes k})} \otimes e_1.$ Moreover the Watatani's dual conditional expectation, $\widetilde{E_k}= E_{k+1}$.
\end{prop}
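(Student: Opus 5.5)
The plan is an induction on $k$ that stacks the lemmas already proved. For the base step, $k=0$, I will take the inclusion $\C\mI_n\subseteq^{E_0}\mn{n}$ with $E_0=\tau_1(\cdot)\mI_n$, which is the unique trace-preserving expectation. Its basic construction is $\langle\mn{n},e_1\rangle\cong\mn{n}\otimes\mn{n}=\mn{n}^{\otimes 2}$ by \cite[Theorem 5.1]{ghoshkumar-kk2026}, and $e_1$ is the rank-one projection onto $\C\eta_1(\mI_n)$. This gives the first two steps of the tower.

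For the inductive step, suppose the tower has been identified up to $\mn{n}^{\otimes k}\subseteq^{E_{k-1}}\mn{n}^{\otimes(k+1)}$ (with the index shift fixed by the statement), where $\mn{n}^{\otimes k}$ sits as $x\mapsto x\otimes\mI_n$ and $E_{k-1}$ is the trace-preserving expectation. Several things then follow from earlier results.
\begin{enumerate}
\item Lemma \ref{lem:jones-proj-mn-gen} gives the Jones projection of the next inclusion $\mn{n}^{\otimes k}\subseteq\mn{n}^{\otimes(k+1)}$ as $e_{k+1}=\operatorname{id}_{L^2(\mn{n}^{\otimes k})}\otimes e_1$.
\item Lemma \ref{lem:hyperfinite-ii-1-basic-cons} shows that the basic construction $\langle\mn{n}^{\otimes(k+1)},e_{k+1}\rangle$ is spatially $\mn{n}^{\otimes(k+2)}$.
\item Under this identification, $\mn{n}^{\otimes(k+1)}$ embeds as $x\otimes\mI_n$: it acts by left multiplication on the first factors of $L^2(\mn{n}^{\otimes k})\otimes L^2(\mn{n})$, and $e_1$ together with $\mn{n}$ generates only the last new tensor leg.
\end{enumerate}
So the next rung has exactly the asserted form.

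Next comes the identification $\widetilde{E_k}=E_{k+1}$. Lemma \ref{lem:dual-exp-mn}, together with the index constancy of Lemma \ref{lem:index-invariance}, shows that Watatani's dual expectation $\widetilde{E_k}$ from $\mn{n}^{\otimes(k+2)}$ onto $\mn{n}^{\otimes(k+1)}$ preserves the normalized traces. Because $\tau_{k+2}$ is faithful, a trace-preserving conditional expectation onto a subalgebra is unique: it is the orthogonal projection in $L^2$ restricted to the algebra. Hence $\widetilde{E_k}$ equals the canonical expectation $E_{k+1}$, that is, $x\otimes y\mapsto x\otimes\tau_1(y)\mI_n$. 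With this, the tower generated by iterating basic constructions (with dual expectations at each stage) coincides with the stated sequence, and the induction closes.

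I expect the main obstacle to be bookkeeping rather than substance. The key point is that the spatial isomorphism in Lemma \ref{lem:hyperfinite-ii-1-basic-cons} must carry the embedded copy of $\mn{n}^{\otimes(k+1)}$ to $\mn{n}^{\otimes(k+1)}\otimes\C\mI_n$ and must carry the trace-preserving structure compatibly. Only then can the uniqueness argument be applied to the expectation onto the correct subalgebra. I will check this first on elementary tensors, using $\eta_{k+1}(x\otimes y)=\eta_k(x)\otimes\eta_1(y)$, and then extend by linearity and density.
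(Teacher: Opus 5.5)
Your proposal is correct and is essentially the paper's argument: the paper's proof just cites Lemma~\ref{lem:jones-proj-mn-gen}, Lemma~\ref{lem:hyperfinite-ii-1-basic-cons} and Lemma~\ref{lem:dual-exp-mn}, and you organize exactly these ingredients into an explicit induction. Your two additions---checking that $\mn{n}^{\otimes(k+1)}$ sits as $x\otimes\mI_n$ inside the basic construction, and using uniqueness of the trace-preserving conditional expectation to pass from $\tau_{k+1}\circ\widetilde{E_k}=\tau_{k+2}$ to $\widetilde{E_k}=E_{k+1}$---spell out steps the paper leaves implicit; note only that Lemma~\ref{lem:dual-exp-mn} is stated for $k\ge 1$, so $\widetilde{E_0}=E_1$ needs the same computation run once at $k=0$.
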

\begin{proof}
    Follows from Lemma \ref{lem:jones-proj-mn-gen}, Lemma \ref{lem:hyperfinite-ii-1-basic-cons} and Lemma \ref{lem:dual-exp-mn}.
\end{proof}

\begin{prop}[{\cite[Proposition 2.2]{nicoara2010}}]\label{prop:nic-tower}
Consider the unital inclusion $\Deltan\subseteq^{E_{\Deltan}} \mn{n}$, where $E_{\Deltan}$ is the canonical $\operatorname{Tr}$ preserving conditional expectation. The infinite Jones tower obtained by iteratively applying the Jones basic construction is given by the sequence
    \[
    \Deltan\subseteq_{e_2} \mn{n}\subseteq_{e_3}
    \cdots
    \subseteq
    \mn{n}^{\otimes (k-1)} \otimes \Deltan
    \subseteq_{e_{2k}}
    \mn{n}^{\otimes k}
    \subseteq_{e_{2k+1}}
    \mn{n}^{\otimes k} \otimes \Deltan
    \subseteq \cdots
    \]
    Where the corresponding Jones projection $e_j$ is given recursively by
    \begin{align*}
        e_2 &:= \frac{1}{n} \sum_{i,j=1}^n E_{ij}, \quad
        e_3 := \sum_{i=1}^n (E_{ii} \otimes E_{ii}), \\
        e_{2k} &:= \bigotimes_{1}^{k-1} (\mI_n \otimes e_2), \quad
        e_{2k+1} := \bigotimes_{1}^{k-1} (\mI_n \otimes e_3),
        \qquad k=2,3,\dots.
    \end{align*}
\end{prop}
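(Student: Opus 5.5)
The plan is to identify each step of the tower by Jones' abstract characterization of the basic construction, and not by computing $L^2$-spaces at every level. Concretely, I would use the following criterion for a unital inclusion of finite-dimensional von Neumann algebras $\sN\subseteq^{E}\sM\subseteq\sP$ with a Markov trace. If there is a projection $e\in\sP$ such that $e$ commutes with $\sN$, $exe=E(x)e$ for all $x\in\sM$, $\sP$ is generated by $\sM\cup\{e\}$, and $x\mapsto xe$ is injective on $\sN$, then $\sP\cong\langle\sM,e\rangle$ with $e$ as Jones projection (Pimsner--Popa; Jones). The index of the trace-preserving expectation $\Deltan\subseteq\mn{n}$ is $n$, and the normalized traces form a Markov trace of modulus $n$ at every stage. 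So it suffices to verify these algebraic identities for the listed $e_j$ and then argue by induction.

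\textbf{First two steps.} I would compute the basic construction of $\Deltan\subseteq\mn{n}$ directly, as in the proof of Lemma \ref{lem:hyperfinite-ii-1-basic-cons}. Identify $L^2(\mn{n},\operatorname{Tr})\cong\C^n\otimes\C^n$ via $E_{ij}\mapsto \varepsilon_i\otimes\varepsilon_j$. Then $\mn{n}$ acts on the first leg, and the modular conjugation $J$ is the flip composed with complex conjugation. The Jones projection onto $L^2(\Deltan)$ is the projection onto $\spn\{\varepsilon_i\otimes\varepsilon_i\}$, which is $\sum_i E_{ii}\otimes E_{ii}$; this is the shape of $e_3$ in the statement. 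The basic construction is $J(\Deltan)'J$, and $J\Deltan J$ acts as $\Deltan$ on the second leg. Hence $\langle\mn{n},e\rangle=\mn{n}\otimes\Deltan$.

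For the next inclusion $\mn{n}\subseteq\mn{n}\otimes\Deltan$, I would check the criterion with the candidate $e=\frac1n\sum_{i,j}E_{ij}$ (suitably tensored). This is the rank-one projection onto $\frac{1}{\sqrt n}\sum_i\varepsilon_i$. The check consists of three facts:
\begin{itemize}
\item $e\,(x\otimes d)\,e=\tau(d)\,x\otimes e$, which is exactly the trace-preserving expectation onto $\mn{n}$;
\item $e$ commutes with $\mn{n}\otimes 1$;
\item $\mn{n}\otimes\Deltan$ together with $e$ generates $\mn{n}\otimes\mn{n}$, because $\Deltan$ and this "Fourier" rank-one projection generate $\mn{n}$.
\end{itemize}
Injectivity is automatic since $\mn{n}$ is a factor.

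\textbf{Induction.} The two verified steps are the pattern
\[
\sA\otimes\Deltan\subseteq\sA\otimes\mn{n}\subseteq\sA\otimes\mn{n}\otimes\Deltan\subseteq\sA\otimes\mn{n}\otimes\mn{n},
\]
tensored with the identity on an auxiliary $\sA=\mn{n}^{\otimes(k-1)}$. The basic construction commutes with tensoring by a fixed finite-dimensional factor: this is the same argument as in Lemma \ref{lem:hyperfinite-ii-1-basic-cons}, where the Jones projection becomes $\mI\otimes e$ as in Lemma \ref{lem:jones-proj-mn-gen}. This yields the recursive formulas $e_{2k}=\mI\otimes e_2$ and $e_{2k+1}=\mI\otimes e_3$ in the tensor-factor positions listed in the statement. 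Finally, I would check that at each stage the conditional expectation used is the Watatani dual of the previous one. This follows from trace preservation and uniqueness of the trace-preserving expectation, exactly as in Lemma \ref{lem:dual-exp-mn}, using that each $e_j$ has normalized trace $1/n$.

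\textbf{Main obstacle.} I expect the main difficulty to be bookkeeping of identifications. Which tensor leg carries $\Deltan$ at each stage must be chosen consistently, so that the $e_j$ given by the explicit formulas, rather than unitary conjugates of them, satisfy $e_jxe_j=E(x)e_j$. I would fix the Kronecker convention of \S\ref{sec:prelims} once and verify the identity on matrix units $E_{ab}\otimes E_{cc}$. The generation claim, that $\Deltan$ and $e_2$ generate $\mn{n}$, is the one nontrivial algebraic input. It follows because $E_{ii}\,e_2\,E_{jj}=\frac1nE_{ij}$.
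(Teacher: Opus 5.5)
The paper gives no proof of this proposition; it is quoted from Nicoara, so there is no in-text argument to compare against. Your proof is correct and self-contained. It follows the same pattern the paper uses for the tower of $\C\subseteq\mn{n}$ (Lemmas \ref{lem:jones-proj-mn-gen}--\ref{lem:dual-exp-mn}), in three steps:
\begin{enumerate}
\item a direct $L^2$ computation for $\Deltan\subseteq\mn{n}$, giving $\mn{n}\otimes\Deltan$ with Jones projection $\sum_i E_{ii}\otimes E_{ii}$;
\item identification of the next step $\mn{n}\subseteq\mn{n}\otimes\Deltan\subseteq\mn{n}\otimes\mn{n}$, with Jones projection $\mI_n\otimes e_2$;
\item stability of the basic construction under tensoring with $\mn{n}^{\otimes(k-1)}$.
\end{enumerate}
The checks you list for the second step are the right ones: $e(x\otimes d)e=\tau(d)(x\otimes 1)e$, commutation with $\mn{n}\otimes 1$, generation via $E_{ii}e_2E_{jj}=\frac1n E_{ij}$, and injectivity from factoriality of $\mn{n}$. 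You could also avoid the abstract criterion in that step. On $L^2(\mn{n}\otimes\Deltan)\cong\C^n\otimes\C^n\otimes\C^n$, right multiplication by $\mn{n}\otimes 1$ acts on the middle leg. Its commutant is $\mn{n}\otimes 1\otimes\mn{n}$, and the projection onto $L^2(\mn{n}\otimes 1)$ is $1\otimes 1\otimes e_2$.

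One thing to fix is the criterion itself. As you list it, with only commutation with $\sN$, $exe=E(x)e$, generation, and injectivity on $\sN$, it is false. Take $\sP=\langle\sM,e_{\sN}\rangle\oplus\sM$ with $\sM$ embedded diagonally and $e=(e_{\sN},0)$. All four conditions hold, yet $\sP$ is strictly larger than the basic construction. You need one further hypothesis:
\begin{itemize}
\item either $\sP$ is a factor, in which case $\spn\sM e\sM$ is a nonzero two-sided ideal of $\sP$ and hence all of $\sP$;
\item or there is a faithful trace on $\sP$ with $\operatorname{tr}(xe)=\lambda\operatorname{tr}(x)$ for $x\in\sM$, where $\lambda^{-1}$ is the index; this forces the unit of that ideal to be $1$.
\end{itemize}
In your application $\sP=\mn{n}\otimes\mn{n}$ is a factor and the normalized trace is Markov, so nothing breaks once the hypothesis is stated explicitly.

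Finally, your induction yields $e_{2k}=\mI_n^{\otimes(k-1)}\otimes e_2$ and $e_{2k+1}=\mI_n^{\otimes(k-1)}\otimes e_3$. Read literally, the statement's $\bigotimes_1^{k-1}(\mI_n\otimes e_2)$ lies in $\mn{n}^{\otimes 2(k-1)}$ rather than $\mn{n}^{\otimes k}$ when $k\ge 3$. Your formula is the correct reading of it.
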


\begin{prop}[{\cite[Proposition 2.3]{nicoara2010}}]\label{prop:nic-unitary-tower}
    Let $u$ be an $n\times n$ Hadamard unitary in $\mn{n}$. Consider the unital inclusion $u\Deltan u^* \subseteq^{E_u} \mn{n}$, where $E_u:= \operatorname{Ad}_u \circ E_{\Deltan} \circ \operatorname{Ad}_{u^*}$. The infinite Jones tower obtained by iteratively applying the Jones basic construction is given by the sequence
    \[
    u\Deltan u^*
    \subseteq_{e_2}u_1 \mn{n}u_1^*\subseteq \cdots
    \subseteq
    u_{2k-1}\mn{n}^{\otimes k}(u_{2k-1})^*
    \subseteq_{e_{2k+1}}
    u_{2k}(\mn{n}^{\otimes k}\otimes\Deltan)(u_{2k})^*
    \subseteq_{e_{2k+2}} \cdots.
    \]
    Here, $u_{2k-1}\in \mn{n}^{\otimes k}\otimes\Deltan$ and
    $u_{2k}\in\mn{n}^{\otimes(k+1)}$ are unitaries defined recursively by
    \begin{align*}
        D_u := \sqrt{n}\sum_{i,j=1}^n
        \overline{u_{ij}}(E_{jj}\otimes E_{ii}), &\quad
        u_1 := (u\otimes\mI_n)D_u,\\
        u_{2k} := u_{2k-1}
        \left(\otimes^k\mI_n\otimes u\right),
        &\quad
        u_{2k+1} := \prod_{i=0}^k
        \left(\otimes^i\mI_n\otimes u_1\otimes^{k-i}\mI_n\right),
        \quad k=1,2,\dots.
    \end{align*}
    The Jones projections $e_k$ are the same as those defined in the preceding proposition.
\end{prop}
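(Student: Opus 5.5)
The plan is to prove the statement by induction along the tower, using the standard abstract characterization of the basic construction (Jones, Pimsner--Popa). For finite-dimensional inclusions $\sA\subseteq^{E}\sB\subseteq\sC$ with faithful tracial data, $\sC$ is the basic construction with Jones projection $e\in\sC$ as soon as three conditions hold:
\begin{enumerate}
\item[(a)] $e$ commutes with $\sA$;
\item[(b)] $exe=E(x)e$ for all $x\in\sB$;
\item[(c)] $\sC$ is generated as an algebra by $\sB$ and $e$ (equivalently $\sC=\spn\,\sB e\sB$).
\end{enumerate}
Given that $a\mapsto ae$ is injective on $\sA$, which follows from faithfulness of the trace, condition (c) can be replaced by a dimension count. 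So at each level I will exhibit the candidate algebra $u_j(\cdots)u_j^*$ and check (a)--(c) for the projection $e_{j+1}$ from Proposition~\ref{prop:nic-tower}.

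First I will record the elementary facts about the unitaries. Since $u$ is Hadamard, $|u_{ij}|=1/\sqrt n$, so $D_u$ is a diagonal unitary in $\Deltan\otimes\Deltan$. Hence $u_1=(u\otimes\mI_n)D_u$ is unitary, and so are all $u_{2k}$ and $u_{2k+1}$, being products of tensor-shifted copies of $u$ and $u_1$. Next I check that the algebras nest: $u_{2k-1}\mn{n}^{\otimes k}u_{2k-1}^*\subseteq u_{2k}(\mn{n}^{\otimes k}\otimes\Deltan)u_{2k}^*$ and so on. For the odd-to-even step this is immediate from $u_{2k}=u_{2k-1}(\otimes^k\mI_n\otimes u)$, since the extra factor lies in a commuting tensor slot. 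For the even-to-odd step it is the place where $D_u$ enters.

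The base case is $u\Deltan u^*\subseteq\mn{n}\subseteq u_1\mn{n}u_1^*$ with $e_2=\frac1n\sum E_{ij}$. I will verify (a) by computing $u_1^*e_2u_1$, or equivalently by checking that $e_2$ commutes with $u\,\mathrm{diag}(\lambda)u^*$ viewed inside the larger algebra. The explicit form $D_u=\sqrt n\sum\overline{u_{ij}}E_{jj}\otimes E_{ii}$ is designed precisely so that $D_u$ intertwines $e_2$ with the conjugated diagonal. For (b), the identity $e_2xe_2=E_u(x)e_2$ amounts to the Hadamard relation $\sum_i|u_{ij}|^2=1$ together with the flatness $|u_{ij}|^2=1/n$, which makes the diagonal of $u^*xu$, conjugated back, reproduce $E_u$. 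For (c), dimension counting suffices: the index of $u\Deltan u^*\subseteq\mn{n}$ is $n$, and the candidate algebra has the right dimension.

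For the inductive step I will exploit the periodic structure. Let $\sigma$ be the shift endomorphism $x\mapsto\mI_n\otimes x$ on the tensor tower. Then $e_{2k}$ and $e_{2k+1}$ are built from shifted copies of $e_2$ and $e_3$, and $u_{2k+1}$ is a product of shifted copies of $u_1$. Checking (a)--(c) at level $j$ therefore reduces, after conjugating by the factors of $u_{j}$ that commute with $e_{j+1}$, to the level-one and level-two computations placed in the last tensor slots. I expect the main obstacle to be the bookkeeping here. One must show that all factors of $u_{2k+1}$ except the last one or two commute with $e_{2k+1}$ and $e_{2k+2}$, and that the remaining factors act on $e_{j+1}$ exactly as in the base case. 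I would first try to establish the two local identities
\[
u_1^*e_3u_1=e_3,\qquad (\mI_n\otimes u)^*\,e_2\,(\mI_n\otimes u)\ \text{commutes with}\ u\Deltan u^*\otimes\mI_n,
\]
both of which should follow from the Hadamard property. With these in hand, I would propagate them along the tower by applying $\sigma$ and using that the differently shifted factors act on disjoint tensor slots.
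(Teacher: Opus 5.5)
The paper does not prove this statement; it cites Nicoara. Your proposal has a genuine gap: it misidentifies which inclusion is being iterated and which projection does the work, so the base case you set up is false.

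\textbf{The wrong inclusion.} The twisted column is the tower of $\C\subseteq u\Deltan u^*$. It sits over the standard tower $\C\subseteq\Deltan\subseteq\mn{n}\subseteq\mn{n}\otimes\Deltan\subseteq\cdots$ through the commuting square formed by $\C\subseteq\Deltan$ and $u\Deltan u^*\subseteq\mn{n}$, as the diagram after the proposition shows. It is not the tower of $u\Deltan u^*\subseteq\mn{n}$. Consequently each of your base-case checks fails:
\begin{itemize}
\item The chain $u\Deltan u^*\subseteq\mn{n}\subseteq u_1\mn{n}u_1^*$ does not exist. $\mn{n}$ and $u_1\mn{n}u_1^*$ are distinct $n^2$-dimensional subalgebras of $\mn{n}\otimes\Deltan$; the latter contains $e_3$, see below.
\item Your dimension count fails, since the basic construction of $u\Deltan u^*\subseteq\mn{n}$ is $n^3$-dimensional.
\item Condition (a) fails in general. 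For $u=\sfH_{\pi/2}$, the element $\bigl[\begin{smallmatrix}0&-i\\ i&0\end{smallmatrix}\bigr]\in u\Deltan[2]u^*$ does not commute with $e_2$.
\item Even for the correct inclusion $\C\subseteq u\Deltan u^*$, the relation $e_2xe_2=\tau(x)e_2$ on $u\Deltan u^*$ holds only if every column sum of $u$ has modulus one.
\end{itemize}
With the paper's own $e_k$, the labels in the display are shifted by one. Horizontally aligned steps share their projection. So $u\Deltan u^*\subseteq u_1\mn{n}u_1^*$ is generated by $e_3$, exactly like $\mn{n}\subseteq\mn{n}\otimes\Deltan$, and the step into $u_{2k}(\mn{n}^{\otimes k}\otimes\Deltan)u_{2k}^*$ uses $e_{2k+2}$, not $e_{2k+1}$.

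\textbf{The missing idea.} $\operatorname{Ad}u_1$ raises the projection index by one rather than fixing it.
\begin{itemize}
\item Write $D_u=\sum_i d_i\otimes E_{ii}$ with $d_i=\sqrt n\,\mathrm{diag}(\overline{u_{i1}},\dots,\overline{u_{in}})$. Then $u_1^*e_3u_1=\sum_i d_i^*u^*E_{ii}u\,d_i\otimes E_{ii}$.
\item Flatness gives $d_i^*u^*e_i=\sqrt n\,(|u_{i1}|^2,\dots,|u_{in}|^2)^T=n^{-1/2}(1,\dots,1)^T$, so $u_1^*e_3u_1=e_2\otimes\mI_n$.
\item Also $u_1^*(u\Deltan u^*)u_1=\Deltan$, because $D_u$ is diagonal.
\end{itemize}
Hence $\operatorname{Ad}u_1$ carries the basic construction $\C\subseteq\Deltan\subseteq\mn{n}$ with projection $e_2$ trace-preservingly onto $\C\subseteq u\Deltan u^*\subseteq u_1\mn{n}u_1^*$ with projection $e_3$. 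This is the base step, and it also shows $e_3\in u_1\mn{n}u_1^*\setminus\mn{n}$. Your proposed identity $u_1^*e_3u_1=e_3$ is therefore false for every Hadamard $u$, since the left side $e_2\otimes\mI_n$ is not diagonal.

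\textbf{The correct propagation.} Set $A_0=\Deltan$, $A_1=\mn{n}$, $A_2=\mn{n}\otimes\Deltan,\dots$, $A_{-1}=\C$ and $u_0=u$. For $j\ge 1$ what propagates is
\begin{itemize}
\item $u_j^*e_{j+2}u_j=e_{j+1}$, and
\item $u_j^*u_{j-1}$ commutes with $A_{j-1}$.
\end{itemize}
Examples are $u_1^*(\mI_n\otimes e_2)u_1=(\mI_n\otimes u)e_3(\mI_n\otimes u)^*$, $u_2^*u_1=\mI_n\otimes u^*$ and $u_3^*u_2=\mI_n\otimes D_u^*$. Then $\operatorname{Ad}u_j$ maps the standard triple $A_{j-2}\subseteq A_{j-1}\subseteq A_j$ with $e_{j+1}$ onto the twisted triple with $e_{j+2}$. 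With the indices corrected in this way, the shift-and-locality bookkeeping you describe does go through.
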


\begin{definition}[Spin Model Subfactor]\label{def:spin-model-subfactor}
    Let $u \in \mn{n}$ be a Hadamard unitary. The \emph{spin model subfactor}, denoted by $\kR_u$, is defined as the hyperfinite $\mathrm{II}_1$ factor obtained by taking the weak closure of the inductive limit of the unitarily twisted Jones tower constructed in Proposition \ref{prop:nic-unitary-tower} with respect to the unique canonical normalized trace. 
    
    Similarly, the ambient hyperfinite $\mathrm{II}_1$ factor, denoted by $\kR$, is defined as the weak closure of the inductive limit of the standard, untwisted Jones tower for the inclusion $\Deltan \subseteq \mn{n}$ given in Proposition \ref{prop:nic-tower}. By construction, $\kR_u \subseteq \kR$.
\end{definition}

\begin{prop}
    Let $M_{k-1} \subseteq M_k \subseteq M_{k+1} = \langle M_k, e_{k+1} \rangle$ be any two consecutive inclusions in the unitarily twisted Jones tower defined in Proposition \ref{prop:nic-unitary-tower}, where $[M_k : M_{k-1}] = n$. Let $\tau$ denote the canonical faithful normalized trace on the ambient matrix algebras. Then Watatani's dual conditional expectation $\tilde{E}_k : M_{k+1} \to M_k$ preserves the trace $\tau$, and consequently, $\tilde{E}_k$ coincides exactly with the unique canonical $\tau$-preserving conditional expectation $E_k : M_{k+1} \to M_k$.
\end{prop}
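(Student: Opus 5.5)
We verify that $\tau\circ\tilde{E}_k$ agrees with $\tau$ on a spanning set of $M_{k+1}$; the identification with $E_k$ then follows from uniqueness. Since $M_{k+1}=\langle M_k,e_{k+1}\rangle$ is the basic construction of $M_{k-1}\subseteq M_k$, the elements $x e_{k+1} y$ with $x,y\in M_k$ span $M_{k+1}$. Watatani's dual expectation acts on them by
\[
\tilde{E}_k(x e_{k+1} y)=\operatorname{Ind}(E_{k-1})^{-1}\,xy .
\]
Here $E_{k-1}:M_k\to M_{k-1}$ is the $\tau$-preserving expectation, and its index is constant along the tower; this constancy is the analogue of Lemma~\ref{lem:index-invariance}, again from \cite[Proposition 2.3.4]{watatani1990}. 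So the left side becomes $\tau(\tilde{E}_k(xe_{k+1}y))=\lambda\,\tau(xy)$, where $\lambda=\operatorname{Ind}(E_{k-1})^{-1}$. I expect $\lambda = 1/n$ for the inclusion $u\Deltan u^*\subseteq \mn{n}$, consistent with $[M_k:M_{k-1}]=n$.

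For the right side, traciality gives $\tau(xe_{k+1}y)=\tau(yx\,e_{k+1})$. Setting $w=yx\in M_k$, we use $e_{k+1}we_{k+1}=E_{k-1}(w)e_{k+1}$ to get
\[
\tau(we_{k+1})=\tau(e_{k+1}we_{k+1})=\tau(E_{k-1}(w)e_{k+1}).
\]
This reduces everything to the Markov property $\tau(ze_{k+1})=\lambda\,\tau(z)$ for $z\in M_{k-1}$. Granting it, $\tau(we_{k+1})=\lambda\,\tau(E_{k-1}(w))=\lambda\,\tau(w)$, and both sides equal $\lambda\,\tau(xy)$.

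\textbf{The main obstacle is establishing the Markov property in the twisted tower.} The plan is to reduce it to the untwisted tower of Proposition~\ref{prop:nic-tower}. By Proposition~\ref{prop:nic-unitary-tower}, each $M_j$ is $u_j(\text{untwisted algebra})u_j^*$, with the same Jones projections $e_j$. The ambient trace $\tau$ on $\mn{n}^{\otimes m}$ is the normalized trace, hence invariant under conjugation by the unitaries $u_j$. So it suffices to check $\tau(e_{k+1})=1/n$ and that $e_{k+1}$ is $\tau$-independent of $M_{k-1}$.
\begin{itemize}
\item \emph{Trace of the projection.} From the explicit formulas, $e_2$ is a rank-one projection in $\mn{n}$ and $e_3=\sum_i E_{ii}\otimes E_{ii}$ has rank $n$ in $\mn{n^2}$, so both have normalized trace $1/n$. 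The tensor formulas for $e_{2k}$ and $e_{2k+1}$ then give normalized trace $1/n$ at every stage.
\item \emph{Independence.} I would use the tensor-product position of $e_{k+1}$: it sits in the last one or two tensor legs, which are not occupied by $M_{k-1}$ after twisting. Alternatively, I would cite Nicoara's result \cite{nicoara2010} that the twisted inclusion is again a basic construction with the normalized trace as Markov trace.
\end{itemize}

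Finally, $\tau\circ\tilde{E}_k=\tau$ on $M_{k+1}$, and $\tilde{E}_k$ is a conditional expectation onto $M_k$. The $\tau$-preserving conditional expectation onto $M_k$ is unique, because $\tau(E(x)y)=\tau(E(xy))=\tau(xy)$ for all $y\in M_k$ determines $E(x)$ by faithfulness of $\tau$. Hence $\tilde{E}_k=E_k$.
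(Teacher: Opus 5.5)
Your overall route is the paper's. On the spanning set $xe_{k+1}y$ you compare Watatani's formula $\tilde{E}_k(xe_{k+1}y)=\frac{1}{n}\,xy$ with the Markov relation $\tau(xe_{k+1}y)=\frac{1}{n}\,\tau(xy)$, and both your reduction to $z\in M_{k-1}$ via $e_{k+1}we_{k+1}=E_{k-1}(w)e_{k+1}$ and your closing uniqueness argument are fine. The gap is in the step you yourself single out.

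The tensor-leg argument for independence is false, because the twisting unitaries straddle legs. For example, $D_u=\sum_i d_i\otimes E_{ii}$ with $d_i=\sqrt{n}\sum_j\overline{u_{ij}}E_{jj}$, so $u_1\mn{n}u_1^*=\{\sum_i u d_i x d_i^* u^*\otimes E_{ii} : x\in\mn{n}\}$ genuinely involves the second leg. The Jones projection of $u_1\mn{n}u_1^*\subseteq u_2(\mn{n}\otimes\Deltan)u_2^*$ is the Jones projection $\mI_n\otimes\sum_l E_{ll}\otimes E_{ll}$ of $\mn{n}\otimes\Deltan\subseteq\mn{n}^{\otimes 2}$, and it also acts on the second leg. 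Your unitary-invariance remark does not help as written either: $\tau(wzw^*e)=\tau(z\,w^*ew)$, and you leave $e$ unconjugated.

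What does give the Markov relation is the commuting-square structure behind Proposition~\ref{prop:nic-unitary-tower}. Each twisted algebra lies in the untwisted algebra of the same row of the diagram, and the Jones projection of a twisted pair is the Jones projection of the untwisted pair in the same two rows. For instance, $\C\subseteq u\Deltan u^*$ is implemented by $\sum_i E_{ii}\otimes E_{ii}$, because $E_{\Deltan}(x)=\tau(x)1$ for $x\in u\Deltan u^*$ when $u$ is Hadamard. In the untwisted tower, $\tau(ye)=\frac1n\tau(y)$ holds for all $y$ in the larger algebra of the pair; this is a direct computation with $\sum_l E_{ll}\otimes E_{ll}$ or $\mI_n\otimes\frac1n\sum_{i,j}E_{ij}$. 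Restricting $y$ to the twisted subalgebra gives exactly what you need.

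Alternatively, a single unitary conjugates the pair together with its projection; at the first level one checks $\sum_i E_{ii}\otimes E_{ii}=u_1(e_2\otimes\mI_n)u_1^*$. This is the only sense in which unitary invariance of $\tau$ can legitimately be invoked.

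The paper's proof simply asserts the Markov relation (``the projections are the standard ones''). With either justification in place of the tensor-leg argument, your proof is the paper's, made complete. Your fallback citation of Nicoara would leave you on the same footing as the paper.
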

\begin{proof}
    By the definition of the Jones basic construction, the algebra $M_{k+1}$ is linearly spanned by elements of the form $x e_{k+1} y$, where $x, y \in M_k$.

    Watatani's dual conditional expectation $\tilde{E}_k : M_{k+1} \to M_k$ is uniquely defined on these spanning elements by
    $$\tilde{E}_k(x e_{k+1} y) = \frac{1}{[M_k : M_{k-1}]} x y = \frac{1}{n} x y$$

    According to Proposition \ref{prop:nic-unitary-tower}, the Jones projections $e_{k+1}$ in this unitarily twisted tower are identically equal to the Jones projections in the standard tower (Proposition \ref{prop:nic-tower}).

    Because the canonical normalized trace $\tau$ on the ambient matrix algebra is unitarily invariant (i.e., $\tau(u X u^*) = \tau(X)$), the spatial rotations defining $M_k$ do not alter the trace values. Furthermore, since the projections $e_{k+1}$ are exactly the standard ones, they maintain the standard Markov property with respect to $\tau$ and the algebra $M_k$.

    Specifically, for any $x, y \in M_k$, the trace satisfies the Markov relation with parameter $\frac{1}{n}$:$$\tau(x e_{k+1} y) = \frac{1}{n} \tau(xy)$$

    We now verify that Watatani's dual expectation $\tilde{E}_k$ preserves $\tau$. Applying the trace to the output of $\tilde{E}_k$ for a generic spanning element and using Lemma \ref{lem:index-invariance}, we get
    $$\tau(\tilde{E}_k(x e_{k+1} y)) = \tau\left( \frac{1}{n} x y \right) = \frac{1}{n} \tau(xy).$$
    Comparing this with the Markov property established above, we get
    $$\tau(\tilde{E}_k(x e_{k+1} y)) = \tau(x e_{k+1} y)$$By linearity, this equality extends to finite sums of such elements. Since these elements span $M_{k+1}$, we conclude that for all $z \in M_{k+1}$:$$\tau(\tilde{E}_k(z)) = \tau(z)$$Thus, Watatani's dual conditional expectation is trace-preserving.
\end{proof}

Throughout this article, we use the notation $\hn{n}$ for the set of all Hadamard unitaries in $\mn{n}$, where $n\geq 2$.

The structural relationship between the spin model subfactors corresponding to different Hadamard unitaries can be seamlessly visualized. Let $u, v \in \hn{n}$ be two Hadamard unitaries. The respective twisted Jones towers for $u$ and $v$ each embed into the standard Jones tower for the ambient factor. The inclusions between these towers at every step of the basic construction yield the following large commutative diagram, demonstrating how $\kR_u$ and $\kR_v$ sit inside the ambient factor $\kR$:

$$
\begin{array}{ccccc} 
\kR_u & \lhook\joinrel\longrightarrow & \kR & \longleftarrow\joinrel\rhook & \kR_v \\
\vdots & & \vdots & & \vdots \\ \rotatebox{90}{$\lhook\joinrel\longrightarrow$} & & \rotatebox{90}{$\lhook\joinrel\longrightarrow$} & & \rotatebox{90}{$\lhook\joinrel\longrightarrow$} \\ u_{2k}(\mn{n}^{\otimes k}\otimes\Deltan)(u_{2k})^* & \lhook\joinrel\longrightarrow & \mn{n}^{\otimes (k+1)} & \longleftarrow\joinrel\rhook & v_{2k}(\mn{n}^{\otimes k}\otimes\Deltan)(v_{2k})^* \\ \rotatebox{90}{$\lhook\joinrel\longrightarrow$} & & \rotatebox{90}{$\lhook\joinrel\longrightarrow$} & & \rotatebox{90}{$\lhook\joinrel\longrightarrow$} \\ u_{2k-1}\mn{n}^{\otimes k}(u_{2k-1})^* & \lhook\joinrel\longrightarrow & \mn{n}^{\otimes k} \otimes \Deltan & \longleftarrow\joinrel\rhook & v_{2k-1}\mn{n}^{\otimes k}(v_{2k-1})^* \\ \rotatebox{90}{$\lhook\joinrel\longrightarrow$} & & \rotatebox{90}{$\lhook\joinrel\longrightarrow$} & & \rotatebox{90}{$\lhook\joinrel\longrightarrow$} \\ \vdots & & \vdots & & \vdots \\ \rotatebox{90}{$\lhook\joinrel\longrightarrow$} & & \rotatebox{90}{$\lhook\joinrel\longrightarrow$} & & \rotatebox{90}{$\lhook\joinrel\longrightarrow$} \\ u_2(\mn{n}\otimes\Deltan)u_2^* & \lhook\joinrel\longrightarrow & \mn{n}^{\otimes 2} & \longleftarrow\joinrel\rhook & v_2(\mn{n}\otimes\Deltan)v_2^* \\ \rotatebox{90}{$\lhook\joinrel\longrightarrow$} & & \rotatebox{90}{$\lhook\joinrel\longrightarrow$} & & \rotatebox{90}{$\lhook\joinrel\longrightarrow$} \\ u_1\mn{n}u_1^* & \lhook\joinrel\longrightarrow & \mn{n}\otimes\Deltan & \longleftarrow\joinrel\rhook & v_1\mn{n}v_1^* \\ \rotatebox{90}{$\lhook\joinrel\longrightarrow$} & & \rotatebox{90}{$\lhook\joinrel\longrightarrow$} & & \rotatebox{90}{$\lhook\joinrel\longrightarrow$} \\ u\Deltan u^* & \lhook\joinrel\longrightarrow & \mn{n} & \longleftarrow\joinrel\rhook & v\Deltan v^* \\ \rotatebox{90}{$\lhook\joinrel\longrightarrow$} & & \rotatebox{90}{$\lhook\joinrel\longrightarrow$} & & \rotatebox{90}{$\lhook\joinrel\longrightarrow$} \\ \C & \lhook\joinrel\longrightarrow & \Deltan & \longleftarrow\joinrel\rhook & \C 
\end{array}
$$

\begin{definition}[Bakshi--Guin Equivalence, {\cite[\S~4]{BG2025-1}}]
    Two Hadamard unitaries $u,v\in\hn{n}$ are said to be \emph{equivalent in the sense of Bakshi and Guin}, denoted by $u\bg v$, if there exist a permutation matrix $P$ and a diagonal unitary matrix $D$ such that
    \[
        v=uPD.
    \]
\end{definition}

For $\alpha\in\R$, consider the $2\times2$ Hadamard unitary
\[
\sfH_\alpha
:=
\begin{bmatrix}
1 & 0\\
0 & e^{i\alpha}
\end{bmatrix}
\fourier{2}
=
\frac{1}{\sqrt{2}}
\begin{bmatrix}
1 & 1\\
e^{i\alpha} & -e^{i\alpha}
\end{bmatrix}
\in\hn{2}.
\]

It is straightforward to verify that
\[
\sfH_\alpha \bg \sfH_\beta
\quad\Longleftrightarrow\quad
\beta\equiv\alpha\pmod{\pi}.
\]
Moreover, it follows from \cite[\S5]{BG2025-1} that, for every $\sfH\in\hn{2}$, there exists an $\alpha\in[0,\pi)$ such that $\sfH\bg\sfH_\alpha$. Consequently,
\[
\hn{2}/\bg\,
=
\Big\{
[\sfH_\alpha]_{\bg}:\alpha\in[0,\pi)
\Big\}.
\]

By \cite[Theorem 4.7 (i)]{BG2025-1}, the spin model subfactors $\kR_{\sfH_\alpha}$, $\alpha\in[0,\pi)$, are pairwise distinct. Furthermore, (by \cite[Theorem 5.8]{BG2025-1}) for distinct $\alpha,\beta\in[0,\pi)$, their intersection is a $\II_1$ subfactor of $\kR$ satisfying
\[
[\kR:\kR_{\sfH_\alpha}\cap\kR_{\sfH_\beta}]=4.
\]

Thus, in the $2\times2$ case, every pair of distinct spin model subfactors of $\kR$ is of the form
\[
\bigl(\kR_{\sfH_\alpha},\kR_{\sfH_\beta}\bigr),
\]
for distinct $\alpha,\beta\in[0,\pi)$.

\begin{theorem}[{\cite[Theorem 5.16]{BG2025-1}}]\label{thm:bakshi-guin}
Let $\alpha,\beta\in[0,\pi)$ be distinct. Then there exists an involutive
outer automorphism $\Theta_{\alpha,\beta}\in\operatorname{Out}(\kR)$, that is,
\[
(\Theta_{\alpha,\beta})^2=\operatorname{id}_{\kR},
\]
such that,
\begin{align*}
\kR_{\sfH_\alpha}\cap\kR_{\sfH_\beta}
&\cong
\left\{
\begin{pmatrix}
x & 0\\
0 & \Theta_{\alpha,\beta}(x)
\end{pmatrix}
:x\in\kR
\right\},\\[1ex]
\kR_{\sfH_\beta}
&\cong
\left\{
\begin{pmatrix}
x & y\\
\Theta_{\alpha,\beta}(y) & \Theta_{\alpha,\beta}(x)
\end{pmatrix}
:x,y\in\kR
\right\},\\[1ex]
\kR_{\sfH_\alpha}
&\cong
\begin{pmatrix}
1&0\\
0&e^{i(\alpha-\beta)}
\end{pmatrix}
\left\{
\begin{pmatrix}
x & y\\
\Theta_{\alpha,\beta}(y) & \Theta_{\alpha,\beta}(x)
\end{pmatrix}
:x,y\in\kR
\right\}
\begin{pmatrix}
1&0\\
0&e^{-i(\alpha-\beta)}
\end{pmatrix}.
\end{align*}
Here, all the algebras on the right-hand side are regarded as subalgebras
of $\mn{2}[\kR]$.
\end{theorem}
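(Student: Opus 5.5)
The plan is to reduce everything to the structure of index-$2$ inclusions, and to realise all three algebras inside $\mn{2}[\kR]\cong\mn{2}\otimes\kR$, using the fact that $\kR\cong\mn{2}\otimes\kR$. In the $2\times2$ case the tower of Proposition~\ref{prop:nic-unitary-tower} has $[M_k:M_{k-1}]=2$ at every step. Hence $\kR_{\sfH_\gamma}\subseteq\kR$ has index $2$ for every $\gamma$. By Goldman's theorem, an index-$2$ inclusion of $\II_1$-factors is of the form $\sN\subseteq\sN\rtimes_\theta\Z_2$ for an outer involution $\theta$ of $\sN$.

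The model to aim for is the following. Given an outer involution $\Theta$ of $\kR$, the crossed product $\kR\rtimes_\Theta\Z_2$ embeds in $\mn{2}[\kR]$ via
\[
x\mapsto\begin{pmatrix}x&0\\0&\Theta(x)\end{pmatrix},\qquad u\mapsto\begin{pmatrix}0&1\\1&0\end{pmatrix}.
\]
Its image is exactly $\left\{\begin{pmatrix}x&y\\\Theta(y)&\Theta(x)\end{pmatrix}\right\}$. The diagonal copy $\left\{\mathrm{diag}(x,\Theta(x))\right\}$ then sits in it with index $2$, and in $\mn{2}[\kR]$ with index $4$. This matches $[\kR:\kR_{\sfH_\alpha}\cap\kR_{\sfH_\beta}]=4$ from \cite[Theorem 5.8]{BG2025-1}.

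The steps, in order, are these.

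\begin{enumerate}
\item Identify the ambient $\kR$ with $\mn{2}[\kR']$, where $\kR'$ is the inductive limit of the tower shifted by one step. Concretely, write the ambient level algebras $\mn{2}^{\otimes k}$ and $\mn{2}^{\otimes k}\otimes\Delta^{(2)}$ as $\mn{2}\otimes(\text{shifted algebra})$, peeling off the first tensor leg.
\item Using the explicit recursive unitaries $u_{2k},u_{2k+1}$ for $u=\sfH_\beta$, compute the image of each level $u_{2k}(\mn{2}^{\otimes k}\otimes\Delta^{(2)})u_{2k}^*$ in this picture. I expect it to have the form $\left\{\begin{pmatrix}x&y\\\Theta_k(y)&\Theta_k(x)\end{pmatrix}\right\}$, with $\Theta_k=\operatorname{Ad}(w_k)$ for an explicit self-adjoint unitary $w_k$ coming from $D_{\sfH_\beta}$ and $\sfH_\beta$. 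One checks that the $\Theta_k$ are compatible along the tower and trace preserving. They therefore extend to an automorphism $\Theta_{\alpha,\beta}$ of $\kR'\cong\kR$ with $\Theta_{\alpha,\beta}^2=\mathrm{id}$.
\item Observe that $\sfH_\alpha=\mathrm{diag}(1,e^{i(\alpha-\beta)})\sfH_\beta$. The same diagonal unitary propagates through $D_u$ and $u_1$ into every $u_{2k}$. In the $\mn{2}$-picture it therefore acts as conjugation by $\mathrm{diag}(1,e^{i(\alpha-\beta)})$, which commutes with $\Theta$ up to the identification. This gives the stated form of $\kR_{\sfH_\alpha}$.
\item Intersect the two explicit forms. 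A matrix $\begin{pmatrix}x&y\\\Theta(y)&\Theta(x)\end{pmatrix}$ is invariant under conjugation by $\mathrm{diag}(1,e^{i(\alpha-\beta)})$ with $e^{i(\alpha-\beta)}\neq\pm1$ exactly when $y=0$ (distinct $\alpha,\beta\in[0,\pi)$ rule out the value $\pm1$). This yields the diagonal description of $\kR_{\sfH_\alpha}\cap\kR_{\sfH_\beta}$, and via the index computation it is consistent with \cite[Theorem 5.8]{BG2025-1}.
\item Show that $\Theta_{\alpha,\beta}$ is outer. If it were $\operatorname{Ad}(w)$, then $\mathrm{diag}(1,w^*)$ would conjugate the crossed-product picture into $\kR\otimes\mathrm{C}^*(\Z_2)$. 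The resulting inclusion would then have nontrivial relative commutant, contradicting irreducibility of $\kR_{\sfH_\beta}\subseteq\kR$. Irreducibility holds for spin models with $n=2$, since the relative commutant of the tower equals $\C$.
\end{enumerate}

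The main obstacle is step~2: bookkeeping the recursively defined $u_{2k}$ and $u_{2k+1}$ so that the pieces assemble coherently into a single automorphism on the inductive limit, rather than level-by-level inner automorphisms that fail to be compatible. I would first verify the picture by hand at levels $1$ and $2$, namely $\sfH_\beta\Delta^{(2)}\sfH_\beta^*\subseteq\mn{2}$ and $u_1\mn{2}u_1^*\subseteq\mn{2}\otimes\Delta^{(2)}$, and guess $w_k$ from those cases. I would then prove compatibility, $w_{k+1}|_{\text{level }k}=w_k$ up to the embedding, by induction using the product formula for $u_{2k+1}$.
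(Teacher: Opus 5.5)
The paper gives no proof of this statement; it is quoted from Bakshi--Guin (Theorem 5.16 there), so your outline has to stand on its own. The skeleton is sound, but step~2 is false as you formulated it whenever $\beta\neq0$. Use the identification $\kR=\mathbb{M}_2\otimes\kR'$ obtained by peeling off the first leg. The bottom algebra $\sfH_\beta\Delta^{(2)}\sfH_\beta^*\subseteq\mathbb{M}_2\otimes\mathbb{C}1$ contains
\[
\sfH_\beta\begin{pmatrix}1&0\\0&-1\end{pmatrix}\sfH_\beta^*=\begin{pmatrix}0&e^{-i\beta}\\ e^{i\beta}&0\end{pmatrix}.
\]
This element has the form $\left(\begin{smallmatrix}x&y\\ \Theta(y)&\Theta(x)\end{smallmatrix}\right)$ only if $\Theta(e^{-i\beta}1)=e^{i\beta}1$. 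No automorphism can do this, because $e^{2i\beta}\neq1$ for $\beta\in(0,\pi)$. So no family $\Theta_k$ exists for $u=\sfH_\beta$ itself, and your hand check would already fail at the first level.

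The repair is to apply your step~3 to both subfactors relative to $\fourier{2}=\sfH_0$. Since $\sfH_\gamma=\mathrm{diag}(1,e^{i\gamma})\fourier{2}$, you have $\kR_{\sfH_\gamma}=\mathrm{Ad}\bigl(\mathrm{diag}(1,e^{i\gamma})\otimes1\bigr)(\kR_{\fourier{2}})$ for $\gamma=\alpha,\beta$. Carry out step~2 only for $\fourier{2}$, and precompose your identification with $\mathrm{Ad}\bigl(\mathrm{diag}(1,e^{-i\beta})\otimes1\bigr)$. This also shows that $\Theta_{\alpha,\beta}$ can be chosen independent of $\alpha$ and $\beta$.

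For $\fourier{2}$, the bookkeeping you flag as the main obstacle disappears if you work with Jones projections instead of $u_{2k},u_{2k+1}$.
\begin{itemize}
\item The twisted tower is obtained from $u\Delta^{(2)}u^*$ by successively adjoining the Jones projections of $\Delta^{(2)}\subseteq\mathbb{M}_2$, $\mathbb{M}_2\subseteq\mathbb{M}_2\otimes\Delta^{(2)}$, and so on. Each of these commutes with the bottom $\Delta^{(2)}$. Hence $\mathrm{Ad}(D)$, for $D\in\Delta^{(2)}$ unitary, carries the $u$-tower onto the $Du$-tower, which is step~3 at once.
\item In the peeled picture, the first of these projections is $\mathrm{diag}(E_{11},E_{22})$. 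Every later one is $\mathrm{diag}(e',e')$, with $e'$ running through the Jones projections of the shifted tower.
\item Take the product-type involution $\Theta=\mathrm{Ad}(\sigma\otimes\sigma\otimes\cdots)$ of $\kR'=\bigotimes_{j\ge1}\mathbb{M}_2$, where $\sigma=\left(\begin{smallmatrix}0&1\\1&0\end{smallmatrix}\right)$. It preserves each level of the shifted tower, flips its diagonal level $\Delta^{(2)}$, and fixes every $e'$.
\item Since $\Theta^2=\mathrm{id}$, the matrices $\left(\begin{smallmatrix}x&y\\ \Theta(y)&\Theta(x)\end{smallmatrix}\right)$ with entries in a fixed level form a $*$-algebra containing all the generators. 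A dimension count then gives equality level by level. So $\kR_{\fourier{2}}$ is the fixed-point algebra of $\mathrm{Ad}(\sigma)\otimes\Theta$.
\end{itemize}
Your step~5 works once you normalize $w^2=1$, which is possible because $w^2$ is central. It is quicker to note that for inner $\Theta$ the algebra is $\kR'\oplus\kR'$, which is not a factor.

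Finally, in step~4 the test for membership in $\kR_{\sfH_\alpha}\cap\kR_{\sfH_\beta}$ is not invariance under $\mathrm{Ad}\,\mathrm{diag}(1,\omega)$. It is that both $m$ and $\mathrm{Ad}\,\mathrm{diag}(1,\bar\omega)(m)$ lie in the crossed-product picture. This gives $(\omega-\bar\omega)\Theta(y)=0$, so your conclusion $y=0$ for $\omega\neq\pm1$ still holds.
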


The following elementary lemma is easy to prove. We include a proof for completeness.

\begin{lem}\label{lem:trace-preserving}
  Let  $\sM$ be a $\II_1$-factor with the  unique faithful normal trace $\tau_{\sM}$ and let  $\Theta\in \operatorname{Aut}(\sM)$ such that $\Theta^2= \operatorname{id}$. Then, $\Theta$ is trace preserving and for any $x,y \in \sM$ 
  $$\tau_{\sM}(\Theta(x)y) = \tau_{\sM}(x\Theta(y)).$$ 
\end{lem}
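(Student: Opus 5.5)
The plan is to use uniqueness of the trace on a $\II_1$-factor, and then derive the identity from trace preservation together with the involutive property $\Theta^2=\operatorname{id}$.

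\emph{Step 1 (trace preservation).} Consider the functional $\varphi:=\tau_{\sM}\circ\Theta$ on $\sM$. Since $\Theta$ is a $*$-automorphism, $\varphi$ is a state: it is positive because $\Theta(x^*x)=\Theta(x)^*\Theta(x)\ge 0$, and it satisfies $\varphi(1)=\tau_{\sM}(\Theta(1))=\tau_{\sM}(1)=1$. It is tracial, since
\[
\varphi(xy)=\tau_{\sM}(\Theta(x)\Theta(y))=\tau_{\sM}(\Theta(y)\Theta(x))=\varphi(yx).
\]

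\emph{Step 2 (normality).} To invoke uniqueness, $\varphi$ must be normal. Any $*$-automorphism of a von Neumann algebra is automatically normal, because it is an order isomorphism and so preserves suprema of bounded increasing nets. Composing with the normal state $\tau_{\sM}$ therefore gives a normal tracial state. By uniqueness of the normal tracial state on the $\II_1$-factor $\sM$, we get $\varphi=\tau_{\sM}$, that is, $\tau_{\sM}\circ\Theta=\tau_{\sM}$.

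Step 2 is the only point requiring care, and it is only mildly delicate. Since a $\II_1$-factor has a unique tracial state even without assuming normality, one could alternatively bypass the normality check entirely by citing that fact.

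\emph{Step 3 (the identity).} For $x,y\in\sM$, apply trace preservation to the element $x\Theta(y)$ and then use $\Theta^2=\operatorname{id}$:
\[
\tau_{\sM}(x\Theta(y))=\tau_{\sM}\bigl(\Theta(x\Theta(y))\bigr)=\tau_{\sM}\bigl(\Theta(x)\Theta^2(y)\bigr)=\tau_{\sM}(\Theta(x)y).
\]
This is exactly the claimed identity.
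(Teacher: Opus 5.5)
Your proposal is correct and follows essentially the same route as the paper: the paper also shows that $\tau_{\sM}\circ\Theta$ is a normalized normal tracial functional, invokes uniqueness of the trace on the $\II_1$-factor, and then obtains the identity from $\Theta^2=\operatorname{id}$ exactly as in your Step 3 (applying trace preservation to $\Theta(x)y$ rather than to $x\Theta(y)$). Your explicit justification of normality supplies a point the paper only asserts.
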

\begin{proof}
    Let $\Theta \in \operatorname{Aut}(\sM)$. We can define a new functional on $\sM$ by composing the trace with this automorphism
    $$\tau'(x) := \tau_{\sM}(\Theta(x)).$$
    Clearly this is faithful, normal  functional. It is a normalized trace because $$\tau'(1) = \tau_{\sM}(\Theta(1)) = \tau_{\sM}(1) = 1.$$
    Also it satisfies the trace property: $\tau'(xy) = \tau_{\sM}(\Theta(xy)) = \tau_{\sM}(\Theta(x)\Theta(y)) = \tau_{\sM}(\Theta(y)\Theta(x)) = \tau_{\sM}(\Theta(yx)) = \tau'(yx)$. Because the normalized trace on a II$_1$ factor is unique, it implies that $\tau' = \tau_{\sM}$. Therefore, $\tau_{\sM}(\Theta(x)) =\tau'(x)= \tau_{\sM}(x)$ for all $x \in \sM$. This implies that $\Theta$ is trace preserving. Let $x,y \in \sM$. Then, we get
  \[
  \tau_{\sM}(\Theta(x)y) = \tau_{\sM}(\Theta(\Theta(x)y))= \tau_{\sM}(\Theta(\Theta(x))\Theta(y))= \tau_{\sM}(x\Theta(y)).
  \]
\end{proof}

\begin{lem}\label{lem:condit-exp}
Let $\alpha,\beta\in[0,\pi)$ be distinct and $\Theta_{\alpha, \beta}\in \operatorname{Out}(\kR)$ as given by Theroem \ref{thm:bakshi-guin}. Then the unique trace-preserving conditional expectations $E_{\kR_{\sfH_\alpha}}: \mathbb{M}_2(\kR) \to \kR_{\sfH_{\alpha}}$ and $E_{\kR_{\sfH_\beta}}: \mathbb{M}_2(\kR) \to \kR_{\sfH_\beta}$ are given as follows:

$$E_{\kR_{\sfH_\alpha}}\left( \begin{bmatrix} m_{11} & m_{12} \\ m_{21} & m_{22} \end{bmatrix} \right) = \begin{bmatrix} x & y e^{i(\beta-\alpha)} \\ \Theta_{\alpha, \beta} (y) e^{i(\alpha-\beta)} & \Theta_{\alpha, \beta} (x) \end{bmatrix}$$
where $x = \frac{1}{2}(m_{11} + \Theta_{\alpha, \beta} (m_{22}))$ and $y = \frac{1}{2}(e^{i(\alpha-\beta)}m_{12} + e^{i(\beta-\alpha)}\Theta_{\alpha, \beta} (m_{21}))$, and
$$E_{\kR_{\sfH_\beta}}\left( \begin{bmatrix} m_{11} & m_{12} \\ m_{21} & m_{22} \end{bmatrix} \right) = \begin{bmatrix} \frac{m_{11} + \Theta_{\alpha, \beta} (m_{22})}{2} & \frac{m_{12} + \Theta_{\alpha, \beta} (m_{21})}{2} \\ \Theta_{\alpha, \beta} \left(\frac{m_{12} + \Theta_{\alpha, \beta} (m_{21})}{2}\right) & \Theta_{\alpha, \beta} \left(\frac{m_{11} + \Theta_{\alpha, \beta} (m_{22})}{2}\right) \end{bmatrix}$$
where $m=\begin{bmatrix} m_{11} & m_{12} \\ m_{21} & m_{22} \end{bmatrix} \in \mathbb{M}_2(\kR)$.
 \end{lem}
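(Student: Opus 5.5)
The plan is to check directly that the proposed maps are the trace-preserving conditional expectations. We work in $\mn{2}[\kR]$ with the trace $\tau=\frac12(\tau_\kR\otimes\mathrm{Tr})$, that is,
\[
\tau(m)=\tfrac12\bigl(\tau_\kR(m_{11})+\tau_\kR(m_{22})\bigr),
\]
and use the identifications of Theorem \ref{thm:bakshi-guin}. Since the trace-preserving conditional expectation onto a von Neumann subalgebra is unique, it is enough to show three things for each proposed map $E$:
\begin{enumerate}[label=(\roman*)]
\item $E$ takes values in the subalgebra and fixes it pointwise;
\item $E$ is $\tau$-preserving;
\item $E$ is bimodular over the subalgebra.
\end{enumerate}
A cleaner route is to show that $E$ is the orthogonal projection of $L^2$ onto the closure of the subalgebra, restricted to $\eta(\mn{2}[\kR])$. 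That amounts to (i) together with
\[
\tau\bigl(z^*(m-E(m))\bigr)=0 \quad \text{for all } z \text{ in the subalgebra.}
\]
This characterizes $E_\sN$ by \eqref{eqn:jones-proj}.

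\textbf{The case $\kR_{\sfH_\beta}$.} Write $x=\frac12(m_{11}+\Theta(m_{22}))$ and $y=\frac12(m_{12}+\Theta(m_{21}))$, with $\Theta=\Theta_{\alpha,\beta}$.

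For (i): the output has the shape $\begin{pmatrix} x & y\\ \Theta(y)&\Theta(x)\end{pmatrix}$, so it lies in $\kR_{\sfH_\beta}$. If $m_{22}=\Theta(m_{11})$ and $m_{21}=\Theta(m_{12})$, then $\Theta^2=\mathrm{id}$ gives $x=m_{11}$ and $y=m_{12}$, so the map fixes the subalgebra.

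For the orthogonality condition, take $z=\begin{pmatrix} a&b\\ \Theta(b)&\Theta(a)\end{pmatrix}$. We get
\[
2\tau(z^*m)=\tau_\kR\bigl(a^*m_{11}+\Theta(b)^*m_{21}+b^*m_{12}+\Theta(a)^*m_{22}\bigr).
\]
Lemma \ref{lem:trace-preserving} gives $\tau_\kR(\Theta(a^*)m_{22})=\tau_\kR(a^*\Theta(m_{22}))$, and the analogous identity for $b$. Hence
\[
2\tau(z^*m)=2\,\tau_\kR(a^*x)+2\,\tau_\kR(b^*y).
\]
This expression depends on $m$ only through $(x,y)$. Since $E(m)$ has the same $(x,y)$, it follows that $\tau(z^*m)=\tau(z^*E(m))$, as required.

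\textbf{The case $\kR_{\sfH_\alpha}$.} Here $\kR_{\sfH_\alpha}=W\kR_{\sfH_\beta}W^*$ with $W=\mathrm{diag}(1,e^{i(\alpha-\beta)})$, a unitary that is scalar in each entry. Therefore
\[
E_{\kR_{\sfH_\alpha}}=\operatorname{Ad}_W\circ E_{\kR_{\sfH_\beta}}\circ\operatorname{Ad}_{W^*},
\]
and this is $\tau$-preserving because $\tau$ is unitarily invariant. The rest is a computation. Conjugating by $W^*$ sends
\[
m_{12}\mapsto e^{i(\alpha-\beta)}m_{12},\qquad m_{21}\mapsto e^{i(\beta-\alpha)}m_{21},
\]
and leaves the diagonal entries unchanged. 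Applying the formula for $E_{\kR_{\sfH_\beta}}$ then produces the stated $x$ and $y$; here we use that $\Theta$ is linear, so scalars pass through it. Finally, conjugating by $W$ multiplies the $(1,2)$ entry by $e^{i(\beta-\alpha)}$ and the $(2,1)$ entry by $e^{i(\alpha-\beta)}$, which gives exactly the displayed matrix.

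The main obstacle is bookkeeping rather than anything conceptual. One has to make sure the trace on $\mn{2}[\kR]$ used in the identification of Theorem \ref{thm:bakshi-guin} is the normalized $\frac12\tau_\kR\otimes\mathrm{Tr}$; this holds by uniqueness of the trace on the $\II_1$ factor $\mn{2}[\kR]\cong\kR$. One also has to keep the phase conventions in $W$ versus $W^*$ consistent, so that the signs of $\alpha-\beta$ in the final formula come out as stated.
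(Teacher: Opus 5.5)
Your proof is correct. For $\kR_{\sfH_\beta}$ you use the same trace-orthogonality relation as the paper, namely $\tau\bigl((m-E(m))r\bigr)=0$ for $r\in\kR_{\sfH_\beta}$, but you run it in the opposite direction.

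\begin{itemize}
\item The paper writes $E_{\kR_{\sfH_\beta}}(m)$ in the generic form $\begin{pmatrix} x & y\\ \Theta_{\alpha,\beta}(y) & \Theta_{\alpha,\beta}(x)\end{pmatrix}$ and solves for $x,y$ by letting $a,b$ range over $\kR$.
\item You instead verify the candidate formula and conclude by uniqueness of the orthogonal projection onto $L^2(\kR_{\sfH_\beta})$. This is slightly tighter logically, since you never have to read off $x,y$ from an identity in $a,b$.
\end{itemize}

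The real difference is the $\kR_{\sfH_\alpha}$ case. The paper only says that the ``exact same trace-orthogonality procedure,'' with the phases factored in, yields the formula. You instead transport the $\beta$-formula via
\[
E_{\kR_{\sfH_\alpha}}=\operatorname{Ad}_W\circ E_{\kR_{\sfH_\beta}}\circ\operatorname{Ad}_{W^*},
\qquad W=\mathrm{diag}\bigl(1,e^{i(\alpha-\beta)}\bigr).
\]
This turns the phase bookkeeping into a short entrywise computation. It also makes transparent why $y$ carries $e^{i(\alpha-\beta)}$ on $m_{12}$ and $e^{i(\beta-\alpha)}$ on $\Theta_{\alpha,\beta}(m_{21})$, and why the outer phases appear in the output. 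It is the same device the paper itself uses later for $F=\operatorname{Ad}_u\circ E_H\circ\operatorname{Ad}_{u^*}$ in Lemma~\ref{lem:norm_value}.

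The paper's direct computation buys essentially nothing beyond avoiding the conjugation identity. Your route gives a fully checked argument where the paper leaves the $\alpha$ case as a sketch.
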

\begin{proof}
Let 
 $m=\begin{bmatrix} m_{11} & m_{12} \\ m_{21} & m_{22} \end{bmatrix} \in \mathbb{M}_2(\kR)$ be given. The unique normalized trace on $\mathbb{M}_2(\kR)$ is given by $\tau_{\mathbb{M}_2(\kR)}(m) = \frac{1}{2}(\tau_{\kR}(m_{11}) + \tau_{\kR}(m_{22}))$ where $\tau_{\kR}$ is the unique trace on $\kR$. Since $\kR$ is a $\II_1$-factor and $\Theta_{\alpha, \beta}  \in \operatorname{Aut}(\kR)$ is an automorphism such that $\Theta_{\alpha, \beta} ^2= \operatorname{id}$, then by Lemma \ref{lem:trace-preserving}, $\Theta_{\alpha, \beta} $ is trace-preserving and satisfies $\tau_{\kR}(\Theta_{\alpha, \beta} (a)b) = \tau_{\kR}(a\Theta_{\alpha, \beta} (b))$ for all $a, b \in \kR$.

 Let $r= \begin{bmatrix} a & b \\ \Theta_{\alpha, \beta} (b) & \Theta_{\alpha, \beta} (a) \end{bmatrix} \in \kR_{\sfH_\beta}$ be an arbitrary element. Then we get 
 $$\tau_{\mathbb{M}_2(\kR)}((m - E_{\kR_{\sfH_\beta}}(m))r) = 0.$$ 
 Equating $\tau_{\mathbb{M}_2(\kR)}(mr) = \tau_{\mathbb{M}_2(\kR)}(E_{\kR_{\sfH_\beta}}(m)r)$ gives
 $$\frac{1}{2}\tau_{\kR}(m_{11}a + m_{12}\Theta_{\alpha, \beta} (b) + m_{21}b + m_{22}\Theta_{\alpha, \beta} (a)) = \frac{1}{2}\tau_{\kR}(xa + y\Theta_{\alpha, \beta} (b) + \Theta_{\alpha, \beta} (y)b + \Theta_{\alpha, \beta} (x)\Theta_{\alpha, \beta} (a)).$$
 Using the property $\tau_{\kR}(\Theta_{\alpha, \beta} (u)v) = \tau_\kR(u\Theta_{\alpha, \beta} (v))$ and $\Theta_{\alpha, \beta} ^2 = \operatorname{id}$, the right side simplifies to $\tau_\kR(xa + y\Theta_{\alpha, \beta} (b))$. The left side rearranges to $\frac{1}{2}\tau_\kR((m_{11} + \Theta_{\alpha, \beta} (m_{22}))a + (m_{12} + \Theta_{\alpha, \beta} (m_{21}))\Theta_{\alpha, \beta} (b))$.
 Since this must hold for all $a, b \in \kR$, we can equate the terms: $x = \frac{1}{2}(m_{11} + \Theta_{\alpha, \beta} (m_{22}))$, $y = \frac{1}{2}(m_{12} + \Theta_{\alpha, \beta} (m_{21}))$. Therefore, the expectation is 
 $$E_{\kR_{\sfH_\beta}}\left( \begin{bmatrix} m_{11} & m_{12} \\ m_{21} & m_{22} \end{bmatrix} \right) = \begin{bmatrix} \frac{m_{11} + \Theta_{\alpha, \beta} (m_{22})}{2} & \frac{m_{12} + \Theta_{\alpha, \beta} (m_{21})}{2} \\ \Theta_{\alpha, \beta} \left(\frac{m_{12} + \Theta_{\alpha, \beta} (m_{21})}{2}\right) & \Theta_{\alpha, \beta} \left(\frac{m_{11} + \Theta_{\alpha, \beta} (m_{22})}{2}\right) \end{bmatrix}.$$ 
 Similarly, take an arbitrary element in $\kR_{\sfH_\alpha}$ and apply the exact same trace-orthogonality procedure. The diagonal terms behave identically to $\kR_{\sfH_\beta}$, yielding the exact same expression for $x$. For the off-diagonal terms, factoring in the phase shift $e^{i(\beta-\alpha)}$ and its conjugate $e^{i(\alpha-\beta)}$, the trace equivalence demands that
 $$y = \frac{1}{2} \left( e^{i(\alpha-\beta)}m_{12} + e^{i(\beta-\alpha)}\Theta_{\alpha, \beta} (m_{21}) \right).$$
 Thus, the conditional expectation onto $\kR_{\sfH_\alpha}$ is
 $$E_{\kR_{\sfH_\alpha}}\left( \begin{bmatrix} m_{11} & m_{12} \\ m_{21} & m_{22} \end{bmatrix} \right) = \begin{bmatrix} x & y e^{i(\beta-\alpha)} \\ \Theta_{\alpha, \beta} (y) e^{i(\alpha-\beta)} & \Theta_{\alpha, \beta} (x) \end{bmatrix}$$
 where the parameters are 
 $$x = \frac{1}{2}(m_{11} + \Theta_{\alpha, \beta} (m_{22}))$$
 $$y = \frac{1}{2}(e^{i(\alpha-\beta)}m_{12} + e^{i(\beta-\alpha)}\Theta_{\alpha, \beta} (m_{21})).$$
\end{proof}

\begin{lem}\label{lem:one-sided-spin}
Let $\alpha,\beta\in[0,\pi)$ be distinct and $\Theta_{\alpha, \beta}\in \operatorname{Out}(\kR)$ as given by Theorem \ref{thm:bakshi-guin}. Let $\tau$ denote the unique normalized trace on $\mathbb{M}_{2}(\kR)$. Then
$$\sup_{m\in \ball{\kR_{\sfH_{\alpha}}}} \Vert{}\eta(m) - \eta(E_{\kR_{\sfH_{\beta}}}(m))\Vert{}_\tau = \vert{}\sin(\beta-\alpha)\vert{}.$$\end{lem}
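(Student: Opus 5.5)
The plan is to compute directly in the matrix picture of Theorem~\ref{thm:bakshi-guin}, using the explicit conditional expectation of Lemma~\ref{lem:condit-exp}. Write $\Theta=\Theta_{\alpha,\beta}$ and $\theta=\alpha-\beta$. By Theorem~\ref{thm:bakshi-guin}, conjugating by $\mathrm{diag}(1,e^{i\theta})$ shows that a general element of $\kR_{\sfH_\alpha}$ has the form
\[
m=\begin{bmatrix} x & e^{-i\theta}y\\ e^{i\theta}\Theta(y) & \Theta(x)\end{bmatrix},\qquad x,y\in\kR .
\]

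First I apply the formula for $E_{\kR_{\sfH_\beta}}$ from Lemma~\ref{lem:condit-exp}, using $\Theta^2=\operatorname{id}$.
\begin{itemize}
\item The $(1,1)$ entry is $\tfrac12(x+\Theta(\Theta(x)))=x$.
\item The $(1,2)$ entry is $\tfrac12(e^{-i\theta}y+\Theta(e^{i\theta}\Theta(y)))=\cos\theta\, y$.
\end{itemize}
Hence
\[
m-E_{\kR_{\sfH_\beta}}(m)=\begin{bmatrix}0 & -i\sin\theta\, y\\ i\sin\theta\,\Theta(y) & 0\end{bmatrix}.
\]
With $\tau=\tfrac12(\tau_\kR\oplus\tau_\kR)$, the trace preservation of $\Theta$ from Lemma~\ref{lem:trace-preserving} gives
\[
\|\eta(m)-\eta(E_{\kR_{\sfH_\beta}}(m))\|_\tau^2=\sin^2\theta\,\|y\|_{2}^2 .
\]

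For the upper bound, I compute $\tau(m^*m)$ in the same way: the cross terms do not enter the diagonal, and $\Theta$ is trace preserving, so $\tau(m^*m)=\|x\|_2^2+\|y\|_2^2$. Therefore $\|y\|_2^2\le\|\eta(m)\|_\tau^2\le\|m\|^2\le 1$ for $m\in\ball{\kR_{\sfH_\alpha}}$, and the supremum is at most $|\sin\theta|=|\sin(\beta-\alpha)|$.

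For attainment, take $x=0$ and $y=1$. Then $m=\begin{bmatrix}0&e^{-i\theta}\\ e^{i\theta}&0\end{bmatrix}$ is a self-adjoint unitary, so it lies in the unit ball. It also lies in $\kR_{\sfH_\alpha}$ because $\Theta(1)=1$. For this $m$ the distance equals $|\sin\theta|$ exactly.

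The main obstacle is careful bookkeeping of the phase conventions: the sign of $\theta$ in the conjugation from Theorem~\ref{thm:bakshi-guin} and the corresponding $(2,1)$ entry. This is harmless, since only $\cos\theta$ and $|\sin\theta|$ survive. I also need to check that the $L^2$-norm on $\mathbb{M}_2(\kR)$ is the normalized one, so that the factor $\tfrac12$ cancels against the two off-diagonal entries.
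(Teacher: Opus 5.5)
Your proposal is correct and follows essentially the same route as the paper. You write $m\in\kR_{\sfH_\alpha}$ in the matrix form of Theorem~\ref{thm:bakshi-guin}, apply Lemma~\ref{lem:condit-exp} to get $\|\eta(m)-\eta(E_{\kR_{\sfH_\beta}}(m))\|_\tau=|\sin(\beta-\alpha)|\,\|y\|_2$, bound $\|y\|_2\le 1$, and attain the bound at $x=0$, $y=1$. The only cosmetic difference is that you bound $\|y\|_2$ via $\tau(m^*m)=\|x\|_2^2+\|y\|_2^2\le\|m\|^2$, whereas the paper compresses $m^*m\le 1$ to its $(2,2)$ corner before taking the trace.
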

\begin{proof}
Let $m$ be an arbitrary element in $\ball{\kR_{\sfH_\alpha}}$. From the definition of $\kR_{\sfH_\alpha}$, it has the form, 
$$m = \begin{bmatrix} x & y e^{i(\beta-\alpha)} \\ \Theta_{\alpha, \beta} (y) e^{i(\alpha-\beta)} & \Theta_{\alpha, \beta} (x) \end{bmatrix} \quad \text{for } x, y \in \kR.$$
Using the conditional expectation formula onto $\kR_{\sfH_\beta}$ from the Lemma \ref{lem:condit-exp}, we substitute the entries of $m$ : $m_{11} = x$, $m_{12} = y e^{i(\beta-\alpha)}$, $m_{21} = \Theta_{\alpha, \beta} (y) e^{i(\alpha-\beta)}$, $m_{22} = \Theta_{\alpha, \beta} (x)$. The diagonal entries of $E_{\kR_{\sfH_\beta}}(m)$ are $\frac{1}{2}(x + \Theta_{\alpha, \beta} (\Theta_{\alpha, \beta} (x))) = x$ (since $\Theta_{\alpha, \beta} ^2 = \text{id}$). The $(1,2)$ entry is
$$\frac{1}{2}(m_{12} + \Theta_{\alpha, \beta} (m_{21})) = \frac{1}{2}\left(y e^{i(\beta-\alpha)} + y e^{i(\alpha-\beta)}\right) = y \cos(\beta-\alpha).$$
Thus, the projection is
$$E_{\kR_{\sfH_{\beta}}}(m) = \begin{bmatrix} x & y \cos(\beta-\alpha) \\ \Theta_{\alpha, \beta} (y) \cos(\beta-\alpha) & \Theta_{\alpha, \beta} (x) \end{bmatrix}.$$

Subtracting the projection $E_{\kR_{\sfH_{\beta}}}(m)$ from $m$, we get

\begin{align*}
    z &= \begin{bmatrix} 0 & y(e^{i(\beta-\alpha)} - \cos(\beta-\alpha)) \\ \Theta_{\alpha, \beta} (y)(e^{i(\alpha-\beta)} - \cos(\beta-\alpha)) & 0 \end{bmatrix}\\
&= \begin{bmatrix} 0 & i y \sin(\beta-\alpha) \\ -i \Theta_{\alpha, \beta} (y) \sin(\beta-\alpha) & 0 \end{bmatrix}.
\end{align*}

First, note that
$$z^* z = \sin^2(\beta-\alpha) \begin{bmatrix} 0 & i \Theta_{\alpha, \beta} (y^*) \\ -i y^* & 0 \end{bmatrix} \begin{bmatrix} 0 & i y \\ -i \Theta_{\alpha, \beta} (y) & 0 \end{bmatrix} = \sin^2(\beta-\alpha) \begin{bmatrix} \Theta_{\alpha, \beta} (y^* y) & 0 \\ 0 & y^* y \end{bmatrix}.$$
We now apply the trace on $\mn{2} \otimes \kR$, we get
$$\tau(z^* z) = \frac{1}{2} \left[ \tau_\kR(\sin^2(\beta-\alpha) \Theta_{\alpha, \beta} (y^* y)) + \tau_\kR(\sin^2(\beta-\alpha) y^* y) \right].$$
Because $\Theta_{\alpha, \beta} $ is trace-preserving, $\tau_\kR(\Theta_{\alpha, \beta} (y^* y)) = \tau_\kR(y^* y) = \Vert{}y\Vert{}_{\tau_\kR}^2$. Then, we get 
$$\Vert{}\eta(z)\Vert{}_\tau^2 = \sin^2(\beta-\alpha) \Vert{}y\Vert{}_{\tau_\kR}^2.$$
Therefore, the $\tau$-norm distance for a given $m$ is
$$\Vert{}\eta(m) - \eta(E_{\kR_{\sfH_\beta}}(m))\Vert{}_\tau = \vert{}\sin(\beta-\alpha)\vert{} \Vert{}y\Vert{}_{\tau_\kR}$$
Since $m$ is in the closed unit ball with respect to the operator norm, this implies $m^* m \le 1_{\mn{2} \otimes \kR}$. Computing the $(2,2)$ diagonal entry of $m^* m$
$$(m^* m)_{22} = (y e^{i(\beta-\alpha)})^*(y e^{i(\beta-\alpha)}) + \Theta_{\alpha, \beta} (x)^*\Theta_{\alpha, \beta} (x) = y^* y + \Theta_{\alpha, \beta} (x^* x) \le 1_\kR$$
Applying the trace $\tau_\kR$ to this inequality gives 
$$\tau_\kR(y^* y) + \tau_\kR(\Theta_{\alpha, \beta} (x^* x)) \le 1 \implies \Vert{}y\Vert{}_{\tau_\kR}^2 + \Vert{}x\Vert{}_{\tau_\kR}^2 \le 1.$$
This forces $\Vert{}y\Vert{}_{\tau_\kR} \le \sqrt{1 - \Vert{}x\Vert{}_{\tau_\kR}^2} \le 1$. Thus, we get
$$\Vert{}\eta(m) - \eta(E_{\kR_{\sfH_{\beta}}}(m))\Vert{}_\tau = \vert{}\sin(\beta-\alpha)\vert{} \Vert{}y\Vert{}_{\tau_\kR}\leq \vert{}\sin(\beta-\alpha)\vert{}.$$ 
We can construct a specific element $m_0 \in \ball{\kR_{\sfH_{\alpha}}}$ that attains this bound. Choose $x = 0$ and $y = 1_\kR$. This gives the matrix
$$m_0 = \begin{bmatrix} 0 & e^{i(\beta-\alpha)} 1_\kR \\ e^{i(\alpha-\beta)} 1_\kR & 0 \end{bmatrix}.$$
$$m_0^* m_0 = \begin{bmatrix} 0 & e^{-i(\alpha-\beta)} 1_\kR \\ e^{-i(\beta-\alpha)} 1_\kR & 0 \end{bmatrix} \begin{bmatrix} 0 & e^{i(\beta-\alpha)} 1_\kR \\ e^{i(\alpha-\beta)} 1_\kR & 0 \end{bmatrix} = \begin{bmatrix} 1_\kR & 0 \\ 0 & 1_\kR \end{bmatrix} = 1_{\mn{2} \otimes \kR}.$$
Since $m_0^* m_0$ is the identity, its operator norm is exactly $\Vert{}m_0\Vert{} = 1$, so $m_0 \in \ball{\kR_{\sfH_{\alpha}}}$. Now, we get
$$\Vert{}\eta(m_0) - \eta(E_{\kR_{\sfH_\beta}}(m_0))\Vert{}_\tau = \vert{}\sin(\beta-\alpha)\vert{} (1) = \vert{}\sin(\beta-\alpha)\vert{}.$$
Therefore, 
$$\sup_{m\in \ball{\kR_{\sfH_{\alpha}}}} \Vert{}\eta(m) - \eta(E_{\kR_{\sfH_{\beta}}}(m))\Vert{}_\tau = \vert{}\sin(\beta-\alpha)\vert{}.$$
\end{proof}

\begin{lem}\label{lem:other-sided-spin}
    Let $\alpha,\beta\in[0,\pi)$ be distinct and $\Theta_{\alpha, \beta}\in \operatorname{Out}(\kR)$ as given by Theorem \ref{thm:bakshi-guin}. Let $\tau$ denote the unique normalized trace on $\mathbb{M}_{2}(\kR)$. Then
    $$\sup_{n\in \ball{\kR_{\sfH_{\beta}}}}\|\eta(n)- \eta(E_{\kR_{\sfH_{\alpha}}}(n))\|_{\tau}= \vert{}\sin(\beta-\alpha)\vert{}.$$
\end{lem}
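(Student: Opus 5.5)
We mirror the proof of Lemma \ref{lem:one-sided-spin}, with the roles of $\kR_{\sfH_\alpha}$ and $\kR_{\sfH_\beta}$ interchanged, using the explicit formula for $E_{\kR_{\sfH_\alpha}}$ from Lemma \ref{lem:condit-exp}.

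First, take an arbitrary $n\in\ball{\kR_{\sfH_\beta}}$. By Theorem \ref{thm:bakshi-guin} it has the form
\[
n=\begin{bmatrix} a & b\\ \Theta_{\alpha,\beta}(b) & \Theta_{\alpha,\beta}(a)\end{bmatrix},\qquad a,b\in\kR.
\]
Substitute $m_{11}=a$, $m_{12}=b$, $m_{21}=\Theta_{\alpha,\beta}(b)$, $m_{22}=\Theta_{\alpha,\beta}(a)$ into the formula for $E_{\kR_{\sfH_\alpha}}$. Using $\Theta_{\alpha,\beta}^2=\operatorname{id}$, this gives $x=a$ and
\[
y=\tfrac12\bigl(e^{i(\alpha-\beta)}+e^{i(\beta-\alpha)}\bigr)b=\cos(\beta-\alpha)\,b.
\]
Hence
\[
n-E_{\kR_{\sfH_\alpha}}(n)=\begin{bmatrix}0 & b\bigl(1-\cos(\beta-\alpha)e^{i(\beta-\alpha)}\bigr)\\ \Theta_{\alpha,\beta}(b)\bigl(1-\cos(\beta-\alpha)e^{i(\alpha-\beta)}\bigr) & 0\end{bmatrix}.
\]
Now observe that
\[
1-\cos\theta\, e^{i\theta}=\sin^2\theta-i\sin\theta\cos\theta=-i\sin\theta\, e^{i\theta},
\]
so each off-diagonal scalar has modulus $|\sin(\beta-\alpha)|$.

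Next, compute $z^*z$ for this difference $z$. It is diagonal with entries $\sin^2(\beta-\alpha)\,\Theta_{\alpha,\beta}(b^*b)$ and $\sin^2(\beta-\alpha)\,b^*b$. Taking the normalized trace on $\mn{2}(\kR)$ and using that $\Theta_{\alpha,\beta}$ is trace preserving (Lemma \ref{lem:trace-preserving}) gives
\[
\|\eta(n)-\eta(E_{\kR_{\sfH_\alpha}}(n))\|_\tau=|\sin(\beta-\alpha)|\,\|b\|_{\tau_\kR}.
\]

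The upper bound comes from the operator-norm constraint. Since $n^*n\le 1$, the $(2,2)$ entry gives $b^*b+\Theta_{\alpha,\beta}(a^*a)\le 1$. Tracing yields $\|b\|_{\tau_\kR}^2+\|a\|_{\tau_\kR}^2\le 1$, so $\|b\|_{\tau_\kR}\le1$. For attainment, take $a=0$ and $b=1_\kR$: then $n_0=\begin{bmatrix}0&1\\1&0\end{bmatrix}$ is a self-adjoint unitary in $\kR_{\sfH_\beta}$, and it achieves the value $|\sin(\beta-\alpha)|$.

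The only potentially delicate step is the phase bookkeeping in the formula for $E_{\kR_{\sfH_\alpha}}$: one must check that the $(2,1)$ entry of $E_{\kR_{\sfH_\alpha}}(n)$ is indeed $\Theta_{\alpha,\beta}(y)e^{i(\alpha-\beta)}$ with the same $y$. Since $\cos(\beta-\alpha)$ is real, this entry equals $\Theta_{\alpha,\beta}(b)\cos(\beta-\alpha)e^{i(\alpha-\beta)}$, which is consistent with the $(2,1)$ entry of the difference displayed above. Everything else is routine.
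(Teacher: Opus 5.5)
Your proposal is correct and follows essentially the same route as the paper's proof. Like the paper, you substitute the generic element of $\kR_{\sfH_\beta}$ into the formula of Lemma~\ref{lem:condit-exp} and use $1-\cos\gamma\, e^{i\gamma}=-i\sin\gamma\, e^{i\gamma}$ to get $\|\eta(n)-\eta(E_{\kR_{\sfH_\alpha}}(n))\|_\tau=|\sin(\beta-\alpha)|\,\|b\|_{\tau_\kR}$; you then bound $\|b\|_{\tau_\kR}\le 1$ via the $(2,2)$ entry of $n^*n$ and attain the bound with the flip unitary $\begin{bmatrix}0&1\\1&0\end{bmatrix}$, with your explicit check of the $(2,1)$ phase spelling out what the paper covers by ``a similar simplification.''
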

\begin{proof}
     Let $n \in \ball{\kR_{\sfH_{\beta}}}$. By definition, $n$ has the form
     $$n = \begin{bmatrix} x & y \\ \Theta_{\alpha, \beta} (y) & \Theta_{\alpha, \beta} (x) \end{bmatrix} \quad \text{for } x, y \in \kR.$$
     
     Using the formula in Lemma \ref{lem:condit-exp} for the conditional expectation onto $\kR_{\sfH_{\alpha}}$, we substitute the entries of $n$: $n_{11} = x$, $n_{12} = y$, $n_{21} = \Theta_{\alpha, \beta} (y)$, $n_{22} = \Theta_{\alpha, \beta} (x)$. The diagonal entries of $E_{\kR_{\sfH_{\alpha}}}(n)$ are $\frac{1}{2}(x + \Theta_{\alpha, \beta} (\Theta_{\alpha, \beta} (x))) = x$. For the $(1,2)$ entry, we first calculate the coefficient $b$ before the phase shift
     $$b = \frac{1}{2}\left( e^{i(\alpha-\beta)}y + e^{i(\beta-\alpha)}\Theta_{\alpha, \beta} (\Theta_{\alpha, \beta} (y)) \right) = y \left( \frac{e^{i(\alpha-\beta)} + e^{-i(\alpha-\beta)}}{2} \right) = y \cos(\beta-\alpha).$$
     Applying the structural phase shift for $\kR_{\sfH_{\alpha}}$, the projection becomes
     $$E_{\kR_{\sfH_{\alpha}}}(n) = \begin{bmatrix} x & y \cos(\beta-\alpha) e^{i(\beta-\alpha)} \\ \Theta_{\alpha, \beta} (y) \cos(\beta-\alpha) e^{i(\alpha-\beta)} & \Theta_{\alpha, \beta} (x) \end{bmatrix}.$$
     
      Subtracting the projection $E_{\kR_{\sfH_\alpha}}(n)$ from $n$, we get 
     $$w = \begin{bmatrix} 0 & y - y \cos(\beta-\alpha) e^{i(\beta-\alpha)} \\ \Theta_{\alpha, \beta} (y) - \Theta_{\alpha, \beta} (y) \cos(\beta-\alpha) e^{i(\alpha-\beta)} & 0 \end{bmatrix}.$$
     We can simplify the $(1,2)$ entry by expressing $\cos(\beta-\alpha)$ in terms of complex exponentials or using Euler's formula directly. Letting $\gamma = \beta - \alpha$, we have
     $$1 - \cos(\gamma)e^{i\gamma} = 1 - \cos(\gamma)(\cos(\gamma) + i\sin(\gamma)) = 1 - \cos^2(\gamma) - i\sin(\gamma)\cos(\gamma)$$$$= \sin^2(\gamma) - i\sin(\gamma)\cos(\gamma) = -i\sin(\gamma)(\cos(\gamma) + i\sin(\gamma)) = -i\sin(\gamma)e^{i\gamma}.$$
     A similar simplification for the $(2,1)$ entry yields $i\sin(\gamma)e^{-i\gamma}$. Thus,:
     $$w = \sin(\beta-\alpha) \begin{bmatrix} 0 & -i y e^{i(\beta-\alpha)} \\ i \Theta_{\alpha, \beta} (y) e^{i(\alpha-\beta)} & 0 \end{bmatrix}.$$
     
     Now we get,
     $$w^* w = \sin^2(\beta-\alpha) \begin{bmatrix} \Theta_{\alpha, \beta} (y^*) \Theta_{\alpha, \beta} (y) & 0 \\ 0 & y^* y \end{bmatrix} = \sin^2(\beta-\alpha) \begin{bmatrix} \Theta_{\alpha, \beta} (y^* y) & 0 \\ 0 & y^* y \end{bmatrix}.$$
     Applying the trace on $\mn{2} \otimes \kR$
     $$\tau(w^* w) = \frac{1}{2} \left[ \tau_\kR(\sin^2(\beta-\alpha) \Theta_{\alpha, \beta} (y^* y)) + \tau_\kR(\sin^2(\beta-\alpha) y^* y) \right].$$
     Since $\Theta_{\alpha, \beta} $ is trace-preserving, this gives the $\tau$-norm distance for an arbitrary $n$
     $$\Vert{}\eta(n) - \eta(E_{\kR_u}(n))\Vert{}_\tau = \vert{}\sin(\beta-\alpha)\vert{} \Vert{}y\Vert{}_{\tau_\kR}.$$
     
      As  $n \in \ball{\kR_{\sfH_{\beta}}}$, the operator norm bound $\Vert{}n\Vert{} \le 1$ implies that the $(2,2)$ entry of $n^*n$ satisfies
     $$y^*y + \Theta_{\alpha, \beta} (x^*x) \le 1_\kR.$$
     Taking the trace $\tau_\kR$ yields $\Vert{}y\Vert{}_{\tau_\kR}^2 + \Vert{}x\Vert{}_{\tau_\kR}^2 \le 1$, meaning $\Vert{}y\Vert{}_{\tau_\kR} \le 1$. Therefore, the distance is bounded above by $\vert{}\sin(\beta-\alpha)\vert{}$. To confirm the supremum is attained, we can choose the specific element $n_0 \in \kR_{\sfH_{\beta}}$ by setting $x = 0$ and $y = 1_\kR$ 
     $$n_0 = \begin{bmatrix} 0 & 1_\kR \\ 1_\kR & 0 \end{bmatrix}.$$
    Since, $n_0^* n_0 = 1_{\mn{2} \otimes \kR}$, we have,  $\Vert{}n_0\Vert{} = 1$ and it lies in the $\ball{\kR_{\sfH_{\beta}}}$. Thus, we get 
    $$\Vert{}\eta(n_0) - \eta(E_{\kR_{\sfH_{\alpha}}}(n_0))\Vert{}_\tau = \vert{}\sin(\beta-\alpha)\vert{} \Vert{}1_\kR\Vert{}_{\tau_\kR} = \vert{}\sin(\beta-\alpha)\vert{}.$$
    Therefore, the supremum is exact
    $$\sup_{n\in \ball{\kR_{\sfH_{\beta}}}} \Vert{}\eta(n) - \eta(E_{\kR_{\sfH_{\alpha}}}(n))\Vert{}_\tau = \vert{}\sin(\beta-\alpha)\vert{}.$$
\end{proof}

\begin{theorem}\label{thm:mt-distance-2by2}
 For two distinct $\alpha,\beta\in[0,\pi)$, we have
  \[
  \dmt(\kR_{\sfH_{\alpha}}, \kR_{\sfH_{\beta}})= \vert{}\sin(\alpha-\beta)\vert{}.
\]
\end{theorem}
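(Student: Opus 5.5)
The plan is to assemble the theorem directly from the two one-sided computations already in hand, using the reformulation of the Mashood--Taylor distance in Theorem \ref{thm:mashood_taylor}. The first step is to transport the problem to the concrete model of Theorem \ref{thm:bakshi-guin}. There, $\kR$ is identified (via the ambient identification used in that theorem) with $\mn{2}[\kR]$ carrying its normalized trace. The subfactors $\kR_{\sfH_\alpha}$ and $\kR_{\sfH_\beta}$ become the explicit matrix subalgebras described there. Since this identification is a trace-preserving $*$-isomorphism, it preserves operator-norm unit balls, $\|\cdot\|_\tau$, and the trace-preserving conditional expectations. Hence $\dmt$ computed in $\kR$ agrees with $\dmt$ computed in $\mn{2}[\kR]$.

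The second step is to apply Theorem \ref{thm:mashood_taylor} with $\sP=\kR_{\sfH_\alpha}$ and $\sQ=\kR_{\sfH_\beta}$. This writes $\dmt(\kR_{\sfH_\alpha},\kR_{\sfH_\beta})$ as the maximum of two suprema. The first is $\sup_{m\in\ball{\kR_{\sfH_\alpha}}}\|\eta(m)-\eta(E_{\kR_{\sfH_\beta}}(m))\|_\tau$, and the second is the analogous quantity with the roles of $\alpha$ and $\beta$ swapped. The conditional expectations appearing here are exactly those computed in Lemma \ref{lem:condit-exp}.

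The third step evaluates each supremum. Lemma \ref{lem:one-sided-spin} gives the first supremum as $|\sin(\beta-\alpha)|$, and Lemma \ref{lem:other-sided-spin} gives the second as $|\sin(\beta-\alpha)|$ as well. The maximum is therefore $|\sin(\beta-\alpha)|=|\sin(\alpha-\beta)|$, which is the claimed formula.

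The only point needing care is the first step: the trace on $\kR$ must correspond to the normalized trace on $\mn{2}[\kR]$ under the identification in Theorem \ref{thm:bakshi-guin}. This holds because $\kR$ is a $\II_1$-factor, whose normalized trace is unique, and any $*$-isomorphism between factors intertwines their normalized traces. Consequently, the conditional expectations and $2$-norms used in the lemmas are the intrinsic ones, and no further obstacle remains.
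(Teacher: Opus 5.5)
Your proposal is correct and matches the paper's proof: the theorem follows by combining the reformulation in Theorem \ref{thm:mashood_taylor} with Lemma \ref{lem:one-sided-spin} and Lemma \ref{lem:other-sided-spin}, each of which gives the value $|\sin(\beta-\alpha)|$. Your extra remark makes explicit a point the paper leaves implicit: the identification of $\kR$ with $\mn{2}[\kR]$ is trace-preserving (by uniqueness of the trace on a $\II_1$-factor), so it preserves unit balls, $2$-norms and conditional expectations.
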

\begin{proof}
 The proof follows from Lemma \ref{lem:one-sided-spin}, Lemma \ref{lem:other-sided-spin} and Theorem \ref{thm:mashood_taylor}.
\end{proof}

\begin{cor}\label{cor:dmt-is-continuous}
    For every $r \in [0,1]$, there exists $2\times2$ Hadamard unitaries $u$ and $v$ such that
    \[
    \dmt(\kR_u,\kR_v)=r.
    \]
\end{cor}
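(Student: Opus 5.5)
The plan is to deduce the corollary directly from Theorem \ref{thm:mt-distance-2by2} by an intermediate value argument. Fix $r\in[0,1]$. By Theorem \ref{thm:mt-distance-2by2}, for distinct $\alpha,\beta\in[0,\pi)$ we have $\dmt(\kR_{\sfH_\alpha},\kR_{\sfH_\beta})=\vert\sin(\alpha-\beta)\vert$. So it suffices to pick $\beta=0$ and $\alpha=\arcsin(r)\in[0,\pi/2]\subseteq[0,\pi)$, and then set $u=\sfH_\alpha$, $v=\sfH_0$. Both are $2\times 2$ Hadamard unitaries, and $\vert\sin(\alpha-0)\vert=\sin(\arcsin r)=r$.

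The only point requiring care is the degenerate case $r=0$, where $\alpha=\beta=0$ are not distinct, so Theorem \ref{thm:mt-distance-2by2} does not apply. Here I would simply take $u=v=\sfH_0$. Then $\kR_u=\kR_v$, and $\dmt(\kR_u,\kR_u)=0$ trivially, since by Theorem \ref{thm:mashood_taylor} each term $\|\eta(x)-\eta(E_{\kR_u}(x))\|_\tau$ vanishes for $x\in\kR_u$.

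For $r\in(0,1]$, $\alpha=\arcsin r\in(0,\pi/2]$ is distinct from $0$, so the theorem applies verbatim. If one additionally wants $u\not\bg v$, this holds because $\alpha\not\equiv 0\pmod\pi$. No step is a genuine obstacle; the argument is a direct substitution, with the $r=0$ case handled separately.
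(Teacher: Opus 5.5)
Your proposal is correct and matches the paper's intended argument: the corollary is stated without proof as an immediate consequence of Theorem \ref{thm:mt-distance-2by2}, obtained by choosing $\alpha,\beta\in[0,\pi)$ with $\vert\sin(\alpha-\beta)\vert=r$. Your separate treatment of $r=0$ via $u=v$ is genuinely needed, since distinct $\alpha,\beta\in[0,\pi)$ never give $\sin(\alpha-\beta)=0$; the paper leaves this case implicit.
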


\begin{cor}\label{cor:dmt-is-same-alpha-2by2}
   For two distinct $\alpha,\beta\in[0,\pi)$, we have
\[
\dmt(\kR_{\sfH_{\alpha}}, \kR_{\sfH_{\beta}})
=
\sqrt{1-\operatorname{cos}\left(\alpha_{\kR_{\sfH_{\alpha}}\cap \kR_{\sfH_{\beta}}}^{\kR}(\kR_{\sfH_{\alpha}},\kR_{\sfH_{\beta}})\right)}.
\]
\end{cor}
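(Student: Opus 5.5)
The plan is to compute the interior angle directly in the matrix model of Theorem \ref{thm:bakshi-guin}. The goal is the identity
\[
\cos\left(\alpha_{\sN}^{\kR}(\kR_{\sfH_\alpha},\kR_{\sfH_\beta})\right)=\cos^2(\beta-\alpha),
\qquad \sN:=\kR_{\sfH_\alpha}\cap\kR_{\sfH_\beta}.
\]
The corollary then follows from Theorem \ref{thm:mt-distance-2by2}, since $1-\cos^2(\beta-\alpha)=\sin^2(\beta-\alpha)$.

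Throughout I write $\gamma=\beta-\alpha$, $\Theta=\Theta_{\alpha,\beta}$, $P=\kR_{\sfH_\alpha}$ and $Q=\kR_{\sfH_\beta}$, all viewed inside $\mn{2}[\kR]$. By Theorem \ref{thm:bakshi-guin}, $\sN$ is the diagonal copy $\{\mathrm{diag}(x,\Theta(x))\}$.

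\textbf{Step 1: identify the relative subspaces.} Since $\sN\subseteq P$ and $\sN\subseteq Q$, the operators $p:=e_P-e_\sN$ and $q:=e_Q-e_\sN$ are projections. From the descriptions of $P$ and $Q$ (and the computation in Lemma \ref{lem:condit-exp}), their ranges are the $\|\cdot\|_\tau$-closures of
\[
K_P=\left\{\eta\begin{bmatrix}0& ye^{i\gamma}\\ \Theta(y)e^{-i\gamma}&0\end{bmatrix}\right\},
\qquad
K_Q=\left\{\eta\begin{bmatrix}0& y\\ \Theta(y)&0\end{bmatrix}\right\},
\qquad y\in\kR .
\]

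\textbf{Step 2: compress one projection by the other.} I will show $pqp=\cos^2\gamma\,p$ (and symmetrically $qpq=\cos^2\gamma\,q$). Take a vector $\xi_y\in K_P$ with parameter $y$. Its image $q\xi_y=e_Q\xi_y-e_\sN\xi_y$ was essentially computed in Lemma \ref{lem:one-sided-spin}: $e_\sN$ kills off-diagonal elements, and $e_Q$ sends $\xi_y$ to $\cos\gamma\,\zeta_y$, where $\zeta_y\in K_Q$ carries the same $y$. Similarly, the computation in Lemma \ref{lem:other-sided-spin} shows $p\zeta_y=\cos\gamma\,\xi_y$. Hence $pqp=\cos^2\gamma\,p$ on a dense subspace of the range of $p$, and $pqp=0$ on its orthocomplement. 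This gives the operator identity.

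\textbf{Step 3: take traces.} Let $\mathrm{tr}$ be the trace on the basic construction. Then
\[
\langle p,q\rangle_\tau=\mathrm{tr}(qp)=\mathrm{tr}(pqp)=\cos^2\gamma\,\mathrm{tr}(p).
\]
For the norms, $\|p\|^2=\mathrm{tr}(p)=[\kR:P]^{-1}-[\kR:\sN]^{-1}$. Since $[\kR:P]=[\kR:Q]=2$ and $[\kR:\sN]=4$, we get $\mathrm{tr}(p)=\mathrm{tr}(q)=1/4$. Therefore the cosine of the interior angle equals $\cos^2\gamma$, as required.

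\textbf{Main obstacle.} The main technical point is making Step 2 rigorous at the level of operators on $L^2(\mn{2}[\kR])$ rather than on elements. I would handle it by checking the identity on the dense set $\eta(\mn{2}[\kR])$, using $e_{\sN}e_P=e_\sN=e_Pe_\sN$ and the explicit conditional expectations of Lemma \ref{lem:condit-exp}. A secondary point is that the trace normalization on the basic construction must make $\mathrm{tr}(e_P)=[\kR:P]^{-1}$. Only the ratio matters, though, so the normalization constant cancels.
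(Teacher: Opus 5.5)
Your argument is correct, but it does not follow the paper's route. The paper proves the corollary in one line, combining Theorem \ref{thm:mt-distance-2by2} with the external result \cite[Theorem 5.10]{BG2025-1}. That citation supplies the value of the interior angle; the paper never computes the angle itself.

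You instead derive $\cos\bigl(\alpha^{\kR}_{\sN}(\kR_{\sfH_\alpha},\kR_{\sfH_\beta})\bigr)=\cos^2(\beta-\alpha)$ directly from the matrix model of Theorem \ref{thm:bakshi-guin}. The steps all hold:
\begin{itemize}
\item $E_{\sN}$ annihilates off-diagonal matrices, so $p=e_P-e_{\sN}$ and $q=e_Q-e_{\sN}$ project onto the closures of the off-diagonal parts.
\item The computations in Lemmas \ref{lem:one-sided-spin} and \ref{lem:other-sided-spin} give $q\xi_y=\cos\gamma\,\zeta_y$ and $p\zeta_y=\cos\gamma\,\xi_y$.
\item Hence $pqp=\cos^2\gamma\,p$. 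The density argument is fine, since both sides are bounded and vanish on $(\operatorname{ran}p)^\perp$.
\item Traciality gives $\mathrm{tr}(qp)=\mathrm{tr}(pqp)$, and the normalization of $\mathrm{tr}$ cancels in the cosine.
\end{itemize}

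Your route buys two things: it is self-contained, and it explains why the formula holds. The same compressions $E_Q|_{P\ominus\sN}$ and $E_P|_{Q\ominus\sN}$ that give $\dmt=|\sin\gamma|$ also give the angle. Concretely, $\xi_y\mapsto\zeta_y$ is isometric (both have norm $\|y\|_{\tau_\kR}$), so $qp$ is $\cos\gamma$ times a partial isometry from $\operatorname{ran}p$ onto $\operatorname{ran}q$. The paper's route is shorter but leaves this link inside the citation.

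You also do not need the index values $[\kR:P]=2$ and $[\kR:\sN]=4$. By symmetry $qpq=\cos^2\gamma\,q$, so $\cos^2\gamma\,\mathrm{tr}(p)=\mathrm{tr}(pq)=\cos^2\gamma\,\mathrm{tr}(q)$. This forces $\mathrm{tr}(p)=\mathrm{tr}(q)$ whenever $\cos\gamma\neq0$, and when $\cos\gamma=0$ both sides of the identity equal $1$.
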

\begin{proof}
    The proof follows from the Theorem \ref{thm:mt-distance-2by2} and \cite[Theorem 5.10]{BG2025-1}.
\end{proof}

\begin{cor}\label{cor:angle-not-quantized}
    For every $\theta  \in [0,\frac{\pi}{2}]$, there exist $\alpha, \beta \in [0, \pi)$ such that
    \[
    \alpha^\kR_{\kR_{\sfH_\alpha} \cap \kR_{\sfH_\beta}}(\kR_{\sfH_\alpha},\kR_{\sfH_\beta})=\theta .
    \]
\end{cor}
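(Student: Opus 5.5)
The plan is to combine Corollary \ref{cor:dmt-is-same-alpha-2by2} with Theorem \ref{thm:mt-distance-2by2}. Write $\theta(\alpha,\beta)$ for the interior angle $\alpha^{\kR}_{\kR_{\sfH_\alpha}\cap\kR_{\sfH_\beta}}(\kR_{\sfH_\alpha},\kR_{\sfH_\beta})$. For distinct $\alpha,\beta\in[0,\pi)$ these two results give
\[
\sin^2(\alpha-\beta)=1-\cos\theta(\alpha,\beta),
\qquad\text{that is,}\qquad
\cos\theta(\alpha,\beta)=\cos^2(\alpha-\beta).
\]
So the first step is simply to record this identity. The interior angle is defined as a number in $[0,\pi]$ via its cosine, and here the cosine lies in $[0,1]$. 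Hence
\[
\theta(\alpha,\beta)=\arccos\bigl(\cos^2(\alpha-\beta)\bigr)\in[0,\tfrac{\pi}{2}].
\]

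The second step is a surjectivity argument. Fix $\alpha=0$ and let $\beta$ range over $(0,\pi)$. The map $\beta\mapsto\cos^2\beta$ is continuous on $(0,\pi/2]$ and decreases from values arbitrarily close to $1$ down to $0$. Given $\theta\in(0,\pi/2]$, we have $\cos\theta\in[0,1)$, so we may choose
\[
\beta=\arccos\sqrt{\cos\theta}\in(0,\tfrac{\pi}{2}].
\]
This choice gives $\cos^2\beta=\cos\theta$, and therefore $\theta(0,\beta)=\theta$. For instance, $\theta=\pi/2$ corresponds to $\beta=\pi/2$.

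The main obstacle is the endpoint $\theta=0$. It would require $\cos^2(\alpha-\beta)=1$, that is, $\alpha\equiv\beta\pmod\pi$. That forces $\alpha=\beta$ in $[0,\pi)$, where the intersection is no longer a proper index-$4$ subfactor and the interior-angle formula degenerates.

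I would handle $\theta=0$ by interpreting the statement for a coinciding pair. For $\alpha=\beta$ we have $e_{\kR_{\sfH_\alpha}}=e_{\kR_{\sfH_\beta}}$, and $e_{\kR_{\sfH_\alpha}}\neq e_{\kR_{\sfH_\alpha}\cap\kR_{\sfH_\beta}}$ is impossible since the intersection is $\kR_{\sfH_\alpha}$ itself. The definition then becomes $0/0$. So I would either adopt the convention that the angle of a subfactor with itself is $0$, which is the limiting value of the formula as $\beta\to\alpha$, or restrict the claim to $\theta\in(0,\pi/2]$ and note that $0$ is attained as a limit.

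I expect the intended reading is that the corollary follows directly from the identity above together with the intermediate value theorem. Any endpoint issue at $\theta=0$ should be dealt with by this convention.
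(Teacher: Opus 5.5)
Your argument is correct and is the intended one; the paper gives no separate proof. Combining Theorem~\ref{thm:mt-distance-2by2} with Corollary~\ref{cor:dmt-is-same-alpha-2by2} (equivalently, \cite[Theorem 5.10]{BG2025-1}) yields $\cos\alpha^{\kR}_{\kR_{\sfH_\alpha}\cap\kR_{\sfH_\beta}}(\kR_{\sfH_\alpha},\kR_{\sfH_\beta})=\cos^2(\alpha-\beta)$. Taking $\alpha=0$ and $\beta=\arccos\sqrt{\cos\theta}$ then gives every $\theta\in(0,\pi/2]$.

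Your diagnosis of the endpoint is also right. For distinct $\alpha,\beta\in[0,\pi)$ one has $\cos^2(\alpha-\beta)<1$, and for $\alpha=\beta$ the defining quotient is $0/0$. So $\theta=0$ is reached only under the convention you propose. This is a defect in the statement itself (and in the matching claim in the introduction), not in your proof.
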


\begin{cor}\label{cor:comm-sq-d-1-2times2}
   Let $\alpha,\beta\in[0,\pi)$ be distinct. Then the following quadruple
    \[
\begin{array}{ccc}
\kR_{\sfH_\alpha}  & \subset & \kR \\
\cup && \cup \\
\kR_{\sfH_\alpha} \cap \kR_{\sfH_\beta} & \subset & \kR_{\sfH_\beta}
\end{array}
\]
is a commuting square if and only if $\dmt (\kR_{\sfH_\alpha}, \kR_{\sfH_\beta}) = 1.$
\end{cor}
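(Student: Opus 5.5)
The plan is to route everything through the interior angle and the two corollaries just proved. The preliminaries recall \cite[Proposition 2.4]{bakshi-etal-2019}: for a quadruple of $\II_1$-factors $\sN\subseteq\sP,\sQ\subseteq\sM$, the quadruple is a commuting square if and only if $\alpha^{\sM}_{\sN}(\sP,\sQ)=\pi/2$. I would apply this with $\sM=\kR$, $\sP=\kR_{\sfH_\alpha}$, $\sQ=\kR_{\sfH_\beta}$ and $\sN=\kR_{\sfH_\alpha}\cap\kR_{\sfH_\beta}$.

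The first step is to check the hypotheses. By \cite[Theorem 5.8]{BG2025-1}, for distinct $\alpha,\beta$ the intersection $\sN$ is a $\II_1$-subfactor with $[\kR:\sN]=4$. Hence $\sN\subseteq\kR_{\sfH_\alpha},\kR_{\sfH_\beta}\subseteq\kR$ is a quadruple of $\II_1$-factors with finite index. In particular the interior angle is defined, since $e_{\kR_{\sfH_\alpha}}\ne e_\sN$ because $\kR_{\sfH_\alpha}\neq\sN$, the subfactors being pairwise distinct by \cite[Theorem 4.7]{BG2025-1}. So the quadruple is a commuting square if and only if
\[
\cos\bigl(\alpha^{\kR}_{\sN}(\kR_{\sfH_\alpha},\kR_{\sfH_\beta})\bigr)=0.
\]

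The second step uses Corollary \ref{cor:dmt-is-same-alpha-2by2}, which gives $\dmt(\kR_{\sfH_\alpha},\kR_{\sfH_\beta})^2=1-\cos(\alpha^{\kR}_{\sN})$. Therefore $\dmt=1$ if and only if $\cos(\alpha^{\kR}_{\sN})=0$. Since the angle lies in $[0,\pi]$, this holds if and only if the angle equals $\pi/2$, which by the first step is equivalent to the commuting square condition. This chain of equivalences proves both directions at once.

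As an alternative and a sanity check, Theorem \ref{thm:mt-distance-2by2} shows that $\dmt=1$ exactly when $|\sin(\alpha-\beta)|=1$, i.e.\ $|\alpha-\beta|=\pi/2$. One could instead verify the commuting square directly from Lemma \ref{lem:condit-exp}: in the model of Theorem \ref{thm:bakshi-guin}, compose $E_{\kR_{\sfH_\alpha}}\circ E_{\kR_{\sfH_\beta}}$. Starting from the $(1,2)$ entry $y$, one application of the two expectations leaves $y\cos(\beta-\alpha)e^{i(\beta-\alpha)}$. The square commutes (the composition equals $E_\sN$, which kills the off-diagonal part) exactly when $\cos(\beta-\alpha)=0$.

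The only delicate point, and the main obstacle, is the identification of the intersection with the diagonal algebra and the expectation onto it. The first route avoids this entirely by invoking the quoted results, so I would present that route and mention the direct computation only as confirmation.
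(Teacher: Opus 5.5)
Your main route is the paper's own proof. The paper deduces the corollary immediately from Corollary~\ref{cor:dmt-is-same-alpha-2by2} together with \cite[Proposition 2.4]{bakshi-etal-2019}, and your hypothesis check (that the intersection is a finite-index $\II_1$-subfactor, by \cite[Theorem 5.8]{BG2025-1}) only spells out what the paper leaves implicit. Your alternative check through Lemma~\ref{lem:condit-exp} and Theorem~\ref{thm:mt-distance-2by2} is also sound: $E_{\kR_{\sfH_\alpha}}\circ E_{\kR_{\sfH_\beta}}$ agrees with the expectation onto the diagonal model of the intersection exactly when $\cos(\beta-\alpha)=0$, and this gives the equivalence without invoking the interior angle at all.
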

\begin{proof}
    This follows immediately from Corollary \ref{cor:dmt-is-same-alpha-2by2} and \cite[Proposition 2.4]{bakshi-etal-2019}.
\end{proof}

\begin{cor}\label{cor:comm-sq-dkk-1-2times2}
   Let $\alpha,\beta\in[0,\pi)$ be distinct. If the following quadruple is a commuting square
    \[
\begin{array}{ccc}
\kR_{\sfH_\alpha}  & \subset & \kR \\
\cup && \cup \\
\kR_{\sfH_\alpha} \cap \kR_{\sfH_\beta} & \subset & \kR_{\sfH_\beta}
\end{array}
\]
then $\dkk (\kR_{\sfH_\alpha}, \kR_{\sfH_\beta}) = 1.$
\end{cor}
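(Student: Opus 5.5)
The plan is to derive this from Corollary \ref{cor:comm-sq-d-1-2times2} together with Proposition \ref{prop:mt_and_kk} and the trivial upper bound for the Kadison--Kastler distance.

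\emph{Step 1.} Assume the quadruple is a commuting square. Corollary \ref{cor:comm-sq-d-1-2times2} applies directly, since $\alpha,\beta$ are distinct, and yields $\dmt(\kR_{\sfH_\alpha},\kR_{\sfH_\beta})=1$. By Theorem \ref{thm:mt-distance-2by2} this is the same as $|\sin(\alpha-\beta)|=1$, that is, $\beta=\alpha\pm\pi/2$ within $[0,\pi)$. We do not need this explicit form, but it makes a useful sanity check.

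\emph{Step 2.} Proposition \ref{prop:mt_and_kk} gives $\dmt(\kR_{\sfH_\alpha},\kR_{\sfH_\beta})\le \dkk(\kR_{\sfH_\alpha},\kR_{\sfH_\beta})$. Combining this with Step 1 gives $\dkk\ge 1$.

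\emph{Step 3.} Remark \ref{KK-facts}(i) gives $\dkk\le 1$, so $\dkk(\kR_{\sfH_\alpha},\kR_{\sfH_\beta})=1$.

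The only point needing care is the setting in which $\dkk$ and $\dmt$ are compared. Both distances must be computed for the same concrete algebras, regarded as subalgebras of $\kR\cong\mn{2}[\kR]$ with its faithful trace. The comparison of Proposition \ref{prop:mt_and_kk} holds because $\|\eta(x)\|_\tau\le\|x\|$ for all $x$ in the ambient algebra. Since the identification of Theorem \ref{thm:bakshi-guin} is a trace-preserving $*$-isomorphism, it preserves both the operator norm and the $\tau$-norm. Hence transporting the computation through it does not change either distance, and no genuine obstacle remains.
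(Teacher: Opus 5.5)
Your proof is correct and follows the same argument the paper intends. The corollary is stated without proof, and the introduction justifies it by combining the commuting-square $\Rightarrow$ $\dmt=1$ direction of Corollary~\ref{cor:comm-sq-d-1-2times2} with $\dmt\le\dkk$ (Proposition~\ref{prop:mt_and_kk}) and the trivial bound $\dkk\le 1$. Your added remark is also correct: transporting through the identification of Theorem~\ref{thm:bakshi-guin} preserves both distances, since a $*$-isomorphism is isometric and is automatically trace-preserving on a $\II_1$ factor.
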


\subsection{A Family of Regular Spin Model Subfactors}\label{subsec:regularity}

A subalgebra $\sN \subseteq \sM$ of a given von Neumann algebra is said to be
\emph{unitary regular} if
\[
W^*\bigl(\{u\in\sU(\sM):u\sN u^*=\sN\}\bigr)=\sM.
\]
We first show that every Hadamard unitary $u$ satisfying $u\hada\fourier{n}$ gives rise to a unitary regular subfactor $\kR_u\subseteq\kR$.

\begin{prop}\label{prop:uni-reg-spin}
    Let $u$ be an $n\times n$ Hadamard unitary in $\mn{n}$ such that
    $u\hada\fourier{n}$. Then $\kR_u\subseteq\kR$ is unitary regular.
\end{prop}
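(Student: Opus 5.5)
The plan is to reduce to the Fourier matrix itself and then handle that case by an explicit normalizer computation.

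\emph{Reduction.} Unitary regularity of an inclusion $\sN\subseteq\kR$ is preserved by automorphisms $\Phi$ of $\kR$: if $w$ normalizes $\sN$, then $\Phi(w)$ normalizes $\Phi(\sN)$, and $\Phi$ maps $W^*(\unor(\sN))$ onto $W^*(\unor(\Phi(\sN)))$. By definition of Hadamard equivalence we can write $u=P_1D_1\fourier{n}P_2D_2$ with $P_i$ permutation matrices and $D_i$ diagonal unitaries. Right multiplication by $P_2D_2$ is exactly the Bakshi--Guin relation, and $\fourier{n}P_2D_2\Deltan(\fourier{n}P_2D_2)^*=\fourier{n}\Deltan\fourier{n}^*$. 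So I would check, using the recursive formulas in Proposition~\ref{prop:nic-unitary-tower}, that the twisted towers for $v$ and $vPD$ agree at every level: $D_{vPD}$ differs from $D_v$ by a diagonal unitary lying in the previous algebra, and $PD$ normalizes $\Deltan$. This gives $\kR_{\fourier{n}P_2D_2}=\kR_{\fourier{n}}$.

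For the left factor $W:=P_1D_1$, which normalizes $\Deltan$, I would show that $\operatorname{Ad}(W)$ on $\mn{n}$ extends to a trace-preserving automorphism $\Phi_W$ of the standard tower of Proposition~\ref{prop:nic-tower}. Concretely, I would define $\Phi_W$ level by level by $\operatorname{Ad}(W^{\otimes}\text{-type})$ unitaries, namely tensor powers of $W$ and $\overline{W}$ in the appropriate slots. These fix each $e_j$: $e_3=\sum E_{ii}\otimes E_{ii}$ is fixed by $\operatorname{Ad}(W\otimes W')$ when $W,W'$ induce the same permutation up to phases, and $e_2$ is fixed by $\operatorname{Ad}(W\otimes\overline W)$. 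They also carry $u_k$ to $(Wu)_k$, up to unitaries in the twisted algebras. Then $\Phi_W$ extends to $\kR$ and maps $\kR_{v}$ onto $\kR_{Wv}$. This verification is the step I expect to be the main obstacle. What needs checking is that the unitaries $D_{Wv}$ and $u_{k}$ transform compatibly, i.e.\ $\Phi_W(D_v)$ equals $D_{Wv}$ times something normalizing the relevant level. I would first try it for $W$ diagonal and for $W$ a permutation separately.

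\emph{Fourier case.} Let $S$ be the cyclic shift and $Z$ the clock matrix in $\mn{n}$. Then $\fourier{n}\Deltan\fourier{n}^*=C^*(S)$ and $\Deltan=C^*(Z)$, with $ZSZ^*=\omega S$. Hence $Z$ normalizes the bottom level $\fourier{n}\Deltan\fourier{n}^*$ of the twisted tower. I would show that $Z$, viewed in $\Deltan\subseteq\kR$, normalizes every level $u_k(\cdots)u_k^*$. The point is that $\operatorname{Ad}Z$ fixes the Jones projections $e_j$ for $j\ge 3$, since $Z$ commutes with the later tensor legs, and it preserves the base $C^*(S)\subseteq\mn{n}\subseteq\cdots$, so it preserves the generated tower; hence $Z\kR_{\fourier{n}}Z^*=\kR_{\fourier{n}}$. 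Similarly, $S$ normalizes $\Deltan$ in the standard tower. The twisted tower and the standard tower interlace: $\fourier{n}\Deltan\fourier{n}^*\subseteq\mn{n}\subseteq u_1\mn{n}u_1^*$, and $\kR$ is generated by $\kR_{\fourier{n}}$ together with $\Deltan=C^*(Z)$. Indeed, $\mn{n}=C^*(S,Z)$, and the higher levels of $\kR$ are generated by $\mn{n}$ and the common Jones projections, which lie in $\kR_{\fourier{n}}$ from level $e_3$ on. Therefore $W^*(\kR_{\fourier{n}}\cup\{Z\})=\kR$, and $\kR_{\fourier{n}}$ is unitary regular, being generated with $Z$ by its own unitaries and the normalizing unitary $Z$. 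Combining this with the reduction yields the proposition.
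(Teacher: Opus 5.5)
Your proposal is correct in substance and has the same two-step skeleton as the paper's proof: transport the problem to $\fourier{n}$ by an isomorphism of inclusions, then show that $\kR_{\fourier{n}}\subseteq\kR$ is regular. The paper gets both steps from outside. It takes $(\kR_u\subseteq\kR)\cong(\kR_{\fourier{n}}\subseteq\kR)$ from \cite[Theorem 5.2]{BGG-2025}, and it asserts $\kR\cong\kR_{\fourier{n}}\rtimes\Z_n$ to get regularity. You instead prove both ingredients by hand.

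Your Fourier step is leaner than the crossed-product identification. Regularity only needs two facts: the clock unitary $Z$ normalizes $\kR_{\fourier{n}}$, and $\{\kR_{\fourier{n}},Z\}''=\kR$. No outerness is required. Both facts hold for the following reasons:
\begin{itemize}
\item $\kR_{\fourier{n}}$ is generated by $C^*(S)$ together with the common Jones projections $e_3,e_4,\dots$;
\item $\kR$ is generated by $\mn{n}=C^*(S,Z)$ together with the same projections;
\item each $e_j$ with $j\ge 3$ is the Jones projection of an inclusion whose smaller algebra contains $\Deltan\ni Z$, so it commutes with $Z$.
\end{itemize}

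The reduction you single out as the main obstacle is equally easy, and you need not track $D_v$ or the $u_k$. By the same generation principle, $\kR_v=\{v\Deltan v^*,e_3,e_4,\dots\}''$ depends only on $v\Deltan v^*$, which gives $\kR_{vPD}=\kR_v$ at once. For a monomial $W$, the trace-preserving automorphism $\operatorname{Ad}W$ of the inclusion $\Deltan\subseteq\mn{n}$ extends canonically to the whole Jones tower and fixes every Jones projection. Hence it carries $\kR_v$ onto $\kR_{Wv}$. This automorphism of $\kR$ is exactly the isomorphism of inclusions that the paper's citation is meant to supply.

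Two side remarks in your sketch are inaccurate, although neither is used.
\begin{itemize}
\item The explicit implementers should carry the phases of $W$ only in the first leg. A factor $\overline{W}$ in a later leg moves $e_4=\mI_n\otimes e_2$ unless $D_1$ is scalar; only the permutation part of $W$ belongs in the later legs.
\item The claimed interlacing $\fourier{n}\Deltan\fourier{n}^*\subseteq\mn{n}\subseteq u_1\mn{n}u_1^*$ is false. The algebras $\mn{n}\otimes\mI_n$ and $u_1\mn{n}u_1^*$ are distinct $n^2$-dimensional subalgebras of $\mn{n}\otimes\Deltan$. If they coincided, the twisted tower would exhaust the standard one and force $\kR_{\fourier{n}}=\kR$.
\end{itemize}
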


\begin{proof}
Since $u\hada\fourier{n}$, it follows from \cite[Theorem 5.2]{BGG-2025}
that $\kR_u$ and $\kR_{\fourier{n}}$ are isomorphic. On the other hand,
it is readily verified that
\[
\kR_{\fourier{n}}\rtimes\mathbb{Z}_n\cong\kR.
\]
Consequently,
\[
(\kR_u\subseteq\kR)
\cong
(\kR_{\fourier{n}}\subseteq\kR)
\cong
(\kR_{\fourier{n}}\subseteq
\kR_{\fourier{n}}\rtimes\mathbb{Z}_n),
\]
and the latter inclusion is unitary regular. Hence, $\kR_u\subseteq\kR$
is unitary regular.
\end{proof}

\begin{lem}\label{lem:uncountable-unitaries}
    For any integer $n \ge 2$, there exists an uncountable family of Hadamard unitaries $\{ u_\lambda \}_{\lambda \in I}$ in $\mn{n}$ such that:
    \begin{itemize}
        \item[(i)] $u_\lambda \hada \fourier{n}$ for all $\lambda \in I$.
        \item[(ii)] $u_\lambda \not\bg \fourier{n}$ for all $\lambda \in I$.
        \item[(iii)] $u_\lambda \not\bg u_\mu$ for all $\lambda \neq \mu$ in $I$.
    \end{itemize}
\end{lem}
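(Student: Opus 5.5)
I will take the family to be left diagonal twists of the Fourier matrix,
\[
u_\lambda := D_\lambda\,\fourier{n},\qquad D_\lambda:=\mathrm{diag}(1,e^{i\lambda},1,\dots,1),\qquad \lambda\in I:=(0,\pi).
\]
Each $u_\lambda$ is a Hadamard unitary, since left multiplication by a diagonal unitary preserves both unitarity and the moduli $1/\sqrt n$ of the entries. Hadamard equivalence $\hada$ allows multiplication on both sides by permutation and diagonal unitary matrices, so (i) is immediate. The idea is that Bakshi--Guin equivalence only allows the twist $PD$ on the \emph{right}. A left diagonal twist therefore usually cannot be undone.

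To prove (ii) and (iii) together, I will set $D_0:=\mI_n$, so that $u_0=\fourier{n}$, and show that $u_\lambda\not\bg u_\mu$ for all distinct $\lambda,\mu\in[0,\pi)$. Suppose $u_\lambda=u_\mu PD$. Then
\[
\fourier{n}^*\,\Delta\,\fourier{n}=PD,\qquad \Delta:=D_\mu^*D_\lambda=\mathrm{diag}(d_0,\dots,d_{n-1}),
\]
so the left-hand side must be a monomial matrix. The key computation is that $\fourier{n}^*\Delta\fourier{n}$ is a circulant matrix whose first column is, up to normalisation, the discrete Fourier transform $\hat d$ of $d=(d_0,\dots,d_{n-1})$.

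A circulant matrix is monomial exactly when its first column has a single nonzero entry. Hence $\hat d$ is supported at one point $j$. Inverting the transform, this means $d_k=c\,\omega^{jk}$ for all $k$, for some scalar $c$, where $\omega=e^{2\pi i/n}$. In other words, $\Delta$ is a scalar multiple of a character of $\Z_n$.

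It remains to rule this out. Here $d=(1,e^{i\theta},1,\dots,1)$ with $\theta=\lambda-\mu\in(-\pi,\pi)\setminus\{0\}$.
\begin{itemize}
\item For $n=2$, a scalar multiple of a character of $\Z_2$ has the form $c(1,\pm1)$. So $d=(1,e^{i\theta})$ would force $e^{i\theta}=\pm1$, which is impossible for $\theta\in(-\pi,\pi)\setminus\{0\}$.
\item For $n\ge3$, the entry $d_0=1$ forces $c=1$. Since $n-1\neq1$, the entry $d_{n-1}=1$ forces $\omega^{-j}=1$, i.e.\ $j=0$. Then $d_1=1$, so $e^{i\theta}=1$, again a contradiction.
\end{itemize}
This gives (iii), and taking $\mu=0$ gives (ii). The family is indexed by the uncountable interval $(0,\pi)$.

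The main obstacle is the step identifying when $\fourier{n}^*\Delta\fourier{n}$ is monomial. I will do this by an explicit computation of its $(k,l)$ entry, which equals $\frac1n\sum_m d_m\,\omega^{m(l-k)}$ up to sign convention. This depends only on $l-k$ and is the $(l-k)$-th Fourier coefficient of $d$. Because the row and column supports of a monomial matrix are single points, only one Fourier coefficient can be nonzero. Fourier inversion then gives the character form. Apart from this, only the definitions of $\bg$ and $\hada$ are needed.
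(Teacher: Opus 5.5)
Your proof is correct, and it uses the same family as the paper, but your argument for (ii) and (iii) is different. The paper takes $u_\lambda=\mathrm{diag}(1,\lambda,1,\dots,1)\fourier{n}$ with $\lambda$ on the arc $\{e^{i\theta}:0<\theta<2\pi/n\}$, which for $n=2$ is the same as your parameter set. It then argues row by row. Row $0$ of both sides of $u_\lambda=u_\mu PD$ is constant, which forces $D=\mI_n$. Comparing the sets of entries in row $1$ then only yields $\lambda\mu^{-1}\in\{1,\omega,\dots,\omega^{n-1}\}$, and this leftover root-of-unity ambiguity is why the paper shrinks the parameter set to an arc of length $2\pi/n$. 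You instead rewrite the equivalence as $\fourier{n}^*\Delta\fourier{n}=PD$ and use the circulant structure together with Fourier inversion. This characterizes exactly when two left diagonal twists $D\fourier{n}$ and $D'\fourier{n}$ are Bakshi--Guin equivalent: precisely when $D'^*D$ is a unimodular scalar multiple of $\mathrm{diag}(\omega^{jk})_{k}$ for some $j$ (the converse direction is immediate). Your route costs a little more computation, but it has three advantages. It explains the ambiguity the paper removes by hand. It treats (ii) and (iii) at once through $\mu=0$. And it shows that for $n\ge 3$ the twists by $e^{i\theta}$ and $e^{i\theta'}$ are inequivalent whenever $e^{i\theta}\neq e^{i\theta'}$, since the untwisted entry $d_{n-1}=1$ rules out every nontrivial character — information the paper's row-$1$ comparison never uses. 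So your larger interval $(0,\pi)$ is legitimate. The paper's route is more elementary, needing only two rows and no Fourier inversion.
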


\begin{proof}

    Let $\Omega_n = \{ 1, \omega, \omega^2, \dots, \omega^{n-1} \}$ denote the set of $n$-th roots of unity.
    
    Define the index set $I$ to be the open arc on the unit circle given by:
    $$I := \{ e^{i\theta} \mid 0 < \theta < 2\pi/n \}$$
    Note that $I$ contains no $n$-th roots of unity.

    For each $\lambda \in I$, define the diagonal unitary matrix $D_\lambda = \operatorname{diag}(1, \lambda, 1, \dots, 1)$ and construct the Hadamard unitary:
    $$u_\lambda = D_\lambda \fourier{n}.$$
    Since $\fourier{n}$ is a Hadamard unitary and $u_\lambda$ is obtained from $\fourier{n}$ by left-multiplying by a diagonal unitary matrix, $u_\lambda$ is also a Hadamard unitary. By definition of Hadamard equivalence, this immediately establishes that $u_\lambda \hada \fourier{n}$ for all $\lambda \in I$, proving property $(i)$.

    To prove property $(ii)$, assume for the sake of contradiction that $u_\lambda \bg \fourier{n}$ for some $\lambda \in I$. By the definition of Bakshi--Guin equivalence, there exist an $n \times n$ permutation matrix $P$ and a diagonal unitary matrix $D = \operatorname{diag}(d_0, d_1, \dots, d_{n-1})$ such that
    $$u_\lambda = \fourier{n} P D.$$

    Substitute $u_\lambda = D_\lambda \fourier{n}$. The $(0,0)$-entry of $D_\lambda$ is $1$, so the $0$-th row of $u_\lambda$ is identical to the $0$-th row of $\fourier{n}$, which is $\frac{1}{\sqrt{n}}(1, 1, \dots, 1)$. On the right side, multiplying $\fourier{n}$ by the permutation matrix $P$ leaves the uniform $0$-th row unchanged. Multiplying by $D$ scales the $k$-th entry of the $0$-th row by $d_k$. Equating the $0$-th rows of both sides gives
    $$\frac{1}{\sqrt{n}}(1, 1, \dots, 1) = \frac{1}{\sqrt{n}}(d_0, d_1, \dots, d_{n-1}).$$
    This implies $d_k = 1$ for all $k$, so $D$ is the identity matrix $\mathbb{I}_n$. The equation reduces to $u_\lambda = \fourier{n} P$.

    Now compare the $1$-st row of both sides. The $1$-st row of $u_\lambda = D_\lambda \fourier{n}$ is multiplied by $\lambda$, so its entries form the set $\{ \frac{\lambda}{\sqrt{n}}\omega^k \mid 0 \le k \le n-1 \}$. The $1$-st row of $\fourier{n} P$ is a permutation of the $1$-st row of $\fourier{n}$, so its entries form the set $\{ \frac{1}{\sqrt{n}}\omega^k \mid 0 \le k \le n-1 \}$. Since $u_\lambda = \fourier{n} P$, these two sets of entries must be equal. However, this requires $\lambda = \omega^k$ for some integer $k$, which implies $\lambda \in \Omega_n$. This contradicts our construction that $I \cap \Omega_n = \emptyset$. Thus, $u_\lambda \not\bg \fourier{n}$ for all $\lambda \in I$, proving property $(ii)$.

    To prove property $(iii)$, suppose that $u_\lambda \bg u_\mu$ for some $\lambda, \mu \in I$. By definition, there exist a permutation matrix $P$ and a diagonal unitary matrix $D = \operatorname{diag}(d_0, d_1, \dots, d_{n-1})$ such that:
    $$u_\lambda = u_\mu P D$$

    Substitute the definitions of $u_\lambda$ and $u_\mu$:

    $$D_\lambda \fourier{n} = D_\mu \fourier{n} P D$$

    Consider the $0$-th row of both sides. Since the $(0,0)$-entries of both $D_\lambda$ and $D_\mu$ are $1$, the $0$-th rows of $D_\lambda \fourier{n}$ and $D_\mu \fourier{n}$ are both precisely $\frac{1}{\sqrt{n}}(1, 1, \dots, 1)$. Right-multiplying $D_\mu \fourier{n}$ by $P$ leaves this uniform row unchanged, and right-multiplying by $D$ scales the $k$-th entry by $d_k$. Equating the $0$-th rows of both sides yields:

    $$\frac{1}{\sqrt{n}}(1, 1, \dots, 1) = \frac{1}{\sqrt{n}}(d_0, d_1, \dots, d_{n-1})$$

    This forces $d_k = 1$ for all $k$, so $D = \mathbb{I}_n$. The equivalence simplifies to
    $$u_\lambda = u_\mu P.$$

    Now, consider the $1$-st row of this simplified equation. The $1$-st row of $u_\lambda$ consists of the entries $\frac{\lambda}{\sqrt{n}}\omega^k$ for $0 \le k \le n-1$. The $1$-st row of $u_\mu P$ is a permutation of the $1$-st row of $u_\mu$, which consists of the entries $\frac{\mu}{\sqrt{n}}\omega^k$ for $0 \le k \le n-1$. Because $u_\lambda = u_\mu P$, the sets of entries in their $1$-st rows must be identical

    $$\left\{ \frac{\lambda}{\sqrt{n}} \omega^k \;\middle\vert{}\; 0 \le k \le n-1 \right\} = \left\{ \frac{\mu}{\sqrt{n}} \omega^k \;\middle\vert{}\; 0 \le k \le n-1 \right\}.$$

    This set equality requires that the specific element $\frac{\lambda}{\sqrt{n}} \cdot 1$ from the left set must equal some element in the right set. Thus, there exists an integer $k$ ($0 \le k \le n-1$) such that

    $$\lambda = \mu \omega^k \implies \lambda \mu^{-1} = \omega^k.$$

    Since $\lambda, \mu \in I$, we can write $\lambda = e^{i\theta_1}$ and $\mu = e^{i\theta_2}$ for some angles $\theta_1, \theta_2 \in (0, 2\pi/n)$. The ratio $\lambda \mu^{-1} = e^{i(\theta_1 - \theta_2)}$ has an argument $\theta_1 - \theta_2$ that strictly satisfies:

    $$-\frac{2\pi}{n} < \theta_1 - \theta_2 < \frac{2\pi}{n}.$$

    The only $n$-th root of unity $\omega^k = e^{2\pi i k / n}$ whose argument falls strictly within the open interval $(-2\pi/n, 2\pi/n)$ is $\omega^0 = 1$.

    Therefore, we must have $\lambda \mu^{-1} = 1$, which implies $\lambda = \mu$. By contraposition, if $\lambda \neq \mu$ in $I$, then $u_\lambda \not\bg u_\mu$. This establishes property $(iii)$ and completes the proof.
    
\end{proof}

For von Neumann subalgebras of a given von Neumann algebra, there is a more general notion of \emph{regularity} (see, e.g., \cite{Renault2008CartanSI,Kumjian_86,bakshi-ghosh-2026}). It is straightforward to verify that every unitarily regular subalgebra is regular; however, the converse does not hold in general (see \cite{BGK2026} for further details). Recently, Bakshi and Ghosh proved that, for a subfactor inclusion $\sN \subseteq \sM$ in which $\sN$ and $\sM$ are of the same type, these two notions of regularity are equivalent; that is, every regular subfactor is unitarily regular \cite{bakshi-ghosh-2026}. Thus, for a subfactor inclusion $\sN \subseteq \sM$ of the same type, we shall simply use the term \emph{regular}, without ambiguity.

With this convention in mind, we have the following result.
\begin{prop}\label{prop:uncountable-reg-spin-models}
    For each $n \in \N$, the hyperfinite $\II_1$-factor $\kR$ contains an uncountable family of pairwise distinct regular $n \times n$ spin model subfactors.
\end{prop}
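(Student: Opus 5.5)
The plan is to combine Lemma \ref{lem:uncountable-unitaries} with Proposition \ref{prop:uni-reg-spin} and the distinctness criterion of Bakshi--Guin. Fix $n\in\N$. The statement only has content for $n\ge 2$: for $n=1$ the spin model construction degenerates, and we simply restrict to $n\ge2$, as the paper does with $\hn{n}$. Lemma \ref{lem:uncountable-unitaries} supplies an uncountable family $\{u_\lambda\}_{\lambda\in I}$ of $n\times n$ Hadamard unitaries, indexed by the open arc $I=\{e^{i\theta}:0<\theta<2\pi/n\}$, such that $u_\lambda\hada\fourier{n}$ for every $\lambda$ and $u_\lambda\not\bg u_\mu$ whenever $\lambda\neq\mu$. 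We then consider the family $(\kR_{u_\lambda})_{\lambda\in I}$ of spin model subfactors of $\kR$ from Definition \ref{def:spin-model-subfactor}.

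First, regularity. Since $u_\lambda\hada\fourier{n}$, Proposition \ref{prop:uni-reg-spin} shows that each inclusion $\kR_{u_\lambda}\subseteq\kR$ is unitary regular, and hence regular. Both $\kR_{u_\lambda}$ and $\kR$ are hyperfinite $\II_1$-factors, so the two notions of regularity coincide here by \cite{bakshi-ghosh-2026}, and the convention adopted just before the statement lets us call them simply regular.

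Second, pairwise distinctness. For $\lambda\ne\mu$ we have $u_\lambda\not\bg u_\mu$ by Lemma \ref{lem:uncountable-unitaries}(iii). We then invoke \cite[Theorem 4.7 (i)]{BG2025-1}, already used in the $2\times2$ case, which says that $\kR_u=\kR_v$ if and only if $u\bg v$. Hence $\kR_{u_\lambda}\neq\kR_{u_\mu}$. Because $I$ is uncountable, this produces an uncountable family of pairwise distinct regular $n\times n$ spin model subfactors of $\kR$.

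The main point requiring care is the distinctness step: we must check that the Bakshi--Guin criterion applies to general $n$, not only to $n=2$, with the same convention $v=uPD$ for the equivalence. The cited theorem is stated in that generality, so no obstacle is expected. A secondary point is that Proposition \ref{prop:uni-reg-spin} relies on \cite[Theorem 5.2]{BGG-2025}, which identifies the inclusions $\kR_u\subseteq\kR$ and $\kR_{\fourier{n}}\subseteq\kR$ only up to isomorphism. This is enough, because regularity is invariant under isomorphism of inclusions, while distinctness is handled separately as actual subalgebras inside the fixed $\kR$.
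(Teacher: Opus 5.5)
Your proposal is correct and follows the paper's proof essentially verbatim: you take the uncountable family from Lemma~\ref{lem:uncountable-unitaries}, obtain pairwise distinctness from condition (iii) together with \cite[Theorem 4.7]{BG2025-1}, and obtain regularity from condition (i) together with Proposition~\ref{prop:uni-reg-spin}. Your remark that the statement should be read for $n\ge 2$ is a sensible clarification that the paper leaves implicit.
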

\begin{proof}
Let $\mathcal{U}$ be an uncountable family of Hadamard unitaries in $\mn{n}$ as provided by Lemma \ref{lem:uncountable-unitaries}. By condition~(iii) of the lemma and \cite[Theorem 4.7]{BG2025-1}, the spin model subfactors associated with the unitaries in $\mathcal{U}$ are pairwise distinct. On the other hand, condition~(i) ensures that every $u \in \mathcal{U}$ is Hadamard equivalent to $\fourier{n}$. Therefore, Proposition \ref{prop:uni-reg-spin} implies that each of the corresponding spin model subfactors is regular. Hence, $\mathcal{U}$ gives rise to an uncountable family of pairwise distinct regular $n \times n$ Spin model subfactors of $\kR$.
\end{proof}

    \section{Distances and Commuting Square}
    \label{sec:dist-comm-sq}

Motivated by Corollaries \ref{cor:comm-sq-d-1-2times2} and \ref{cor:comm-sq-dkk-1-2times2}, in this section, we study to what extent the maximal interior angle between two intermediate $\II_1$-subfactors (that is, a commuting square) can capture the maximal distance between them.

We begin with the Remark \ref{rem:comm-sq-vs-max-d}, which establishes that, in general, the notion of a commuting square for a quadruple is not related to the maximal values of the distance. Then, after proving an important observation on the inclusions of finite-index $\II_1$-factors (see Proposition \ref{prop:exist-exp-zero-uni}), we are able to prove that for the inclusions of finite-index subfactors when exactly a commuting square (maximal interior angle) maximizes the two distances (see Theorem \ref{thm:comm-sq-imply-d-1-ii-1} and Theorem \ref{thm:dkk-is-1-for-comm-sq-spin}). We also prove that the intermediate spin model subfactors which form a commuting square in the hyperfinite $\II_1$-factor $\kR$ have maximal distances (see Theorem \ref{thm:spin-model-n-times-n}).

In Theorem \ref{thm:diffuse-comm-sq}, we prove that two diffuse von Neumann subalgebras that are orthogonal in the sense of Popa are necessarily at maximal distance.

\begin{remark}\label{rem:comm-sq-vs-max-d}
The phenomenon observed in Corollary \ref{cor:comm-sq-d-1-2times2} and \ref{cor:comm-sq-dkk-1-2times2} does not hold for a general inclusion of $\II_1$ factors $\sN \subseteq \sM$. In particular, two intermediate subfactors may attain the maximal Kadison--Kastler distance without forming a commuting square. We illustrate this with the following example.

Let $G$ be a finite group acting outerly on the hyperfinite $\II_1$-factor $\kR$, and let $H,K$ be two non-trivial distinct subgroups of $G$ such that $H \cap K \ne \{e \}$. Then it is well known that the following quadruple is not a commuting square:
\[ \begin{array}{ccc} \kR \rtimes H & \subset & \kR \rtimes G \\ \cup && \cup \\ \kR & \subset & \kR \rtimes K \end{array} \]
Hence,
\[
\alpha(\kR\rtimes H,\kR\rtimes K)\neq \frac{\pi}{2},
\]
and consequently, by \cite[Proposition 2.4]{bakshi-etal-2019}, the above square is not a commuting square. On the other hand, \cite[Theorem 5.11]{kumar-2026} implies that
\[
\dmt(\kR\rtimes H,\kR\rtimes K)
=
1
=
\dkk(\kR\rtimes H,\kR\rtimes K).
\]
Thus, the two intermediate subfactors are at maximal Kadison--Kastler distance even though they do not form a commuting square.

For the converse direction, namely, whether the formation of a commuting square forces the Kadison--Kastler distance to be maximal, we provide a sufficient condition in Theorem \ref{thm:comm-sq-imply-d-1-ii-1}.
\end{remark}

\begin{prop}\label{prop:exist-exp-zero-uni}
    Let $\mathcal{N} \subseteq \mathcal{M}$ be an inclusion of $\mathrm{II}_1$-subfactors with $[\mathcal{M}:\mathcal{N}] \in \{2,3\} \cup [4,\infty]$. Then there exists a unitary $u \in \mathcal{M}$ such that $E_{\mathcal{N}}(u) = 0$ where $E_\sN$ be the canonical trace preserving conditional expectation onto $\sN$.
\end{prop}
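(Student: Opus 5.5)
Our plan is to work with the Jones projection $e_{\mathcal N}$ and Pimsner--Popa bases, splitting into the three index regimes $[\mathcal M:\mathcal N]=2$, $=3$, and $\ge 4$ (including $\infty$).

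\emph{Case of index $\ge 4$.} We look for a unitary of the form $u=1-(1+\bar\lambda)\,p$ with $p$ a projection and $\lambda$ a unimodular scalar, or more generally $u=\sum_j \lambda_j p_j$ for a partition of unity $\{p_j\}\subseteq\mathcal M$. Then $E_{\mathcal N}(u)=\sum_j\lambda_j E_{\mathcal N}(p_j)$, and the task becomes finding projections whose conditional expectations are scalar multiples of $1$. For this we use Popa's theorem that, in the relative commutant-free setting, one can find a projection $p\in\mathcal M$ with $E_{\mathcal N}(p)=\lambda 1$ for $\lambda=[\mathcal M:\mathcal N]^{-1}$; more generally, by Popa's construction of a partition of unity, one can choose a partition $\{p_j\}_{j=1}^k$ with $E_{\mathcal N}(p_j)=t_j 1$ for any prescribed weights $t_j\ge 0$, $\sum t_j=1$, provided the weights are compatible with the index. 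In particular, when the index is $\ge 4$ we can take two orthogonal projections $p_1,p_2$ with $E_{\mathcal N}(p_i)=\tfrac12$ (the basic construction $\mathcal N\subseteq\mathcal M\subseteq\langle\mathcal M,e_{\mathcal N}\rangle$ has room for such a choice, since $\mathcal N'\cap\mathcal M$ may be nontrivial but the index is large enough). The unitary $u=p_1-p_2$ then satisfies $E_{\mathcal N}(u)=0$.

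\emph{Cases of index $2$ and $3$.} Here $[\mathcal M:\mathcal N]=k\in\{2,3\}$, and we use the fact that subfactors of these indices are (by Goldman's theorem for index $2$, and the index-$3$ classification of Goldman/Popa) of the form $\mathcal N\subseteq\mathcal N\rtimes\mathbb Z_k$, or else arise as fixed points $\mathcal M^{\mathbb Z_k}$ of an outer action. In the crossed product picture the implementing unitary $u_g$ of a nontrivial group element satisfies $E_{\mathcal N}(u_g)=0$. In the fixed-point picture we instead pick a $\mathbb Z_k$-eigenunitary with nontrivial eigenvalue.

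\emph{The infinite-index case.} Here $\mathcal N'\cap\mathcal M$ may be large, so we would argue via a diffuse masa of $\mathcal M$ on which $E_{\mathcal N}$ is controlled. Alternatively, we exploit the fact that $\mathcal M\ominus\mathcal N$ contains a Haar unitary in $\mathcal M$ by Popa's intertwining results: since $\mathcal M\not\prec_{\mathcal M}\mathcal N$ fails only for finite index, there exist unitaries $u_n$ with $\|E_{\mathcal N}(u_n)\|_2\to 0$, and we then perturb.

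The main obstacle is producing an \emph{exactly} zero expectation, rather than an approximate one, when $\mathcal N'\cap\mathcal M\ne\mathbb C$ or when the index is infinite. I would first try Popa's result \cite{popa-1983}-style: for index $\ge 4$, use the Jones tower to embed the Temperley--Lieb projections $e_1,e_2$ and build the unitary $2e-1$ from a projection $e$ with $E_{\mathcal N}(e)=\tfrac12$. Such an $e$ should exist because the Jones projections generate $\tfrac12$-weighted projections precisely when $\lambda\le\tfrac14$. This is where the threshold $4$ naturally enters.
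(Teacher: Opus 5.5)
Your proposal has genuine gaps in the index-$3$ and infinite-index cases. In the index-$\ge 4$ case it agrees with the paper only after your heuristic is replaced by a precise citation.

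\textbf{Index $3$.} This case rests on a false classification. Subfactors of index $3$ are irreducible with principal graph $D_4$ or $A_5$. Only the $D_4$ (depth-$2$) ones are of the form $\mathcal N\subseteq\mathcal N\rtimes\mathbb Z_3$, equivalently $\mathcal M^{\mathbb Z_3}\subseteq\mathcal M$. Inclusions with principal graph $A_5$ are not covered by your argument. Examples are Jones' Temperley--Lieb subfactor of index $3$ in $\mathscr R$, or $\mathscr R\rtimes\mathbb Z_2\subset\mathscr R\rtimes S_3$ for an outer action of $S_3$. So this case is unproved as written.

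The paper uses no structure theory here. It takes $u=2p-1$ with $E_{\mathcal N}(p)=\tfrac12 1$. The projection $p$ comes from Popa's relative dimensions, which for index $4\cos^2(\pi/n)$ include $\lambda P_1(\lambda)/P_2(\lambda)=\lambda/(1-\lambda)$. At $\lambda=\tfrac13$ this equals $\tfrac12$, for every index-$3$ inclusion.

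Your index-$2$ argument via Goldman's theorem is fine. Even more directly, the projection $e\in\mathcal M$ from a downward basic construction satisfies $E_{\mathcal N}(e)=\lambda 1=\tfrac12 1$, so $u=2e-1$ works.

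\textbf{Index $\ge 4$.} Your reduction to a projection $p$ with $E_{\mathcal N}(p)=\tfrac12 1$ is the paper's; your $u=p_1-p_2$ is just $2p_1-1$. But the existence of $p$ is exactly Popa's theorem that $\tfrac12\in\Lambda(\mathcal M,\mathcal N)$ for $[\mathcal M:\mathcal N]\in[4,\infty]$, and you should cite it. The heuristic you give does not work, for three reasons:
\begin{enumerate}
\item The Jones projections $e_1,e_2$ of the upward tower lie in the basic constructions, not in $\mathcal M$.
\item Projections built from the tunnel Jones projections inside $\mathcal M$ do not automatically have scalar expectation onto $\mathcal N$. Producing one that does is precisely the content of Popa's theorem.
\item The claim that $\tfrac12$ is reached ``precisely when $\lambda\le\tfrac14$'' is contradicted by the cases $\lambda=\tfrac12,\tfrac13$ of the very statement you are proving.
\end{enumerate}

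\textbf{Infinite index.} This case is not proved. Your intertwining argument only yields unitaries with $\|E_{\mathcal N}(u_n)\|_2\to 0$. The step ``we then perturb'' to reach $E_{\mathcal N}(u)=0$ exactly is the whole difficulty, and you supply no lemma for it. The Haar unitary in $\mathcal M\ominus\mathcal N$ is asserted, not derived. The paper covers $[\mathcal M:\mathcal N]=\infty$ by the same citation of Popa's theorem on $[4,\infty]$.
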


\begin{proof}
    Following Popa \cite{popa-1989-rel-dim}, we define the set of relative dimensions $\Lambda(\mathcal{M}, \mathcal{N})$ as the set of all scalars $\alpha \in [0, 1]$ for which there exists a projection $p \in \mathcal{M}$ such that $E_{\mathcal{N}}(p) = \alpha 1_{\mathcal{N}}$. 
    
    If we can show that $1/2 \in \Lambda(\mathcal{M}, \mathcal{N})$, then there exists a projection $p \in \mathcal{M}$ satisfying $E_{\mathcal{N}}(p) = \frac{1}{2} 1_{\mathcal{N}}$. We can then define the element $u = 2p - 1$. Because $p$ is a projection, $u$ is a self-adjoint unitary in $\mathcal{M}$. Applying the conditional expectation yields
    $$E_{\mathcal{N}}(u) = 2 E_{\mathcal{N}}(p) - 1_{\mathcal{N}} = 2\left(\frac{1}{2} 1_{\mathcal{N}}\right) - 1_{\mathcal{N}} = 0,$$
    which would complete the proof. Thus, it suffices to prove that $1/2 \in \Lambda(\mathcal{M}, \mathcal{N})$ for all index values in the given range. Let $\lambda = [\mathcal{M}:\mathcal{N}]^{-1}$.

    \textbf{Case 1:} Assume $[\mathcal{M}:\mathcal{N}] \in [4, \infty]$. 
    By Theorem 5.2 of \cite{popa-1989-rel-dim}, setting the parameter $t = 1/2$ directly implies that $1/2 \in \Lambda(\mathcal{M}, \mathcal{N})$ for any subfactor with index greater than or equal to $4$.

    \textbf{Case 2:} Assume $[\mathcal{M}:\mathcal{N}] \in \{2, 3\}$.
    By the index rigidity theorem, these values correspond to the discrete part of the Jones spectrum, where the index takes the form $4\cos^2(\pi/n)$ for $n=4$ and $n=6$, respectively. By Theorem 5.1 of \cite{popa-1989-rel-dim}, the set of relative dimensions for this discrete spectrum is explicitly given by:
    $$\Lambda(\mathcal{M}, \mathcal{N}) = \{0\} \cup \left\{ \lambda \frac{P_{k-1}(\lambda)}{P_k(\lambda)} \;\middle|\; 1 \le k \le n-2 \right\},$$
    where $P_k(x)$ are polynomials defined recursively by $P_0(x) = 1$, $P_1(x) = 1$, and $P_{k+1}(x) = P_k(x) - x P_{k-1}(x)$.
    \begin{itemize}
        \item For $[\mathcal{M}:\mathcal{N}] = 2$ (which means $n=4$ and $\lambda = 1/2$), evaluating the polynomials for $k=1, 2$ explicitly yields $\Lambda(\mathcal{M}, \mathcal{N}) = \{0, 1/2, 1\}$. Clearly, $1/2 \in \Lambda(\mathcal{M}, \mathcal{N})$.
        \item For $[\mathcal{M}:\mathcal{N}] = 3$ (which means $n=6$ and $\lambda = 1/3$), evaluating the polynomials up to $k=4$ yields the set of relative dimensions $\{0, 1/3, 1/2, 2/3, 1\}$. Again, $1/2 \in \Lambda(\mathcal{M}, \mathcal{N})$.
    \end{itemize}

    In all cases, we have established that $1/2 \in \Lambda(\mathcal{M}, \mathcal{N})$. The existence of the trace-zero unitary $u = 2p - 1$ immediately follows.
\end{proof}

\begin{theorem}\label{thm:dkk-is-1-for-comm-sq-spin}
Let $\sM$ be a finite von Neumann algebra, and let $\sN,\sP,\sQ$ be $\II_1$-subfactors of $\sM$ such that
\[
\begin{array}{ccc}
    \sP & \subset & \sM \\
    \cup && \cup \\
    \sN & \subset & \sQ
\end{array}
\]
is a commuting square. Suppose that the index of at least one of the inclusions $\sN\subseteq\sP$ or $\sN\subseteq\sQ$ belongs to $\{2,3\}\cup[4,\infty]$. Then $\dkk(\sP,\sQ)=1=\dmt(\sP,\sQ)$.
\end{theorem}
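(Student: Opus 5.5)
Without loss of generality, assume the index of $\sN\subseteq\sP$ lies in $\{2,3\}\cup[4,\infty]$; the other case follows by symmetry of the commuting square condition $E_\sP E_\sQ=E_\sN=E_\sQ E_\sP$. Since $\dmt\le\dkk\le 1$ by Proposition \ref{prop:mt_and_kk} and Remark \ref{KK-facts}, it suffices to prove $\dmt(\sP,\sQ)\ge 1$. We will exhibit an element of $\ball{\sP}$ whose $\|\cdot\|_\tau$-distance to $\eta(\ball{\sQ})$ equals $1$, and then use Theorem \ref{thm:mashood_taylor}.

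Apply Proposition \ref{prop:exist-exp-zero-uni} to the inclusion $\sN\subseteq\sP$ of $\II_1$-factors. This gives a unitary $u\in\sP$ with $E^{\sP}_{\sN}(u)=0$, where $E^{\sP}_{\sN}$ is the trace-preserving expectation of $\sP$ onto $\sN$. The trace of $\sM$ restricted to $\sP$ is the unique trace on the factor $\sP$. Hence $E^{\sP}_{\sN}$ coincides with $E_\sN|_{\sP}$, where $E_\sN$ is the $\tau$-preserving expectation from $\sM$, and so $E_\sN(u)=0$. The commuting square gives
\[
E_\sQ(u)=E_\sQ(E_\sP(u))=E_\sN(u)=0 .
\]
By Proposition \ref{prop:best_approximation} we then have $\dist(\eta(u),\eta(\ball{\sQ}))=\|\eta(u)-\eta(E_\sQ(u))\|_\tau=\|\eta(u)\|_\tau=\tau(u^*u)^{1/2}=1$. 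Theorem \ref{thm:mashood_taylor} now yields $\dmt(\sP,\sQ)\ge 1$, and therefore $\dmt(\sP,\sQ)=1=\dkk(\sP,\sQ)$.

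The step needing the most care is identifying the expectations. Proposition \ref{prop:exist-exp-zero-uni} is phrased with the canonical expectation for the inclusion $\sN\subseteq\sP$, whereas the commuting square uses $\tau$-preserving expectations from $\sM$ (Definition \ref{def:comm-sq}). I will reconcile the two via uniqueness of the trace on the $\II_1$-factors $\sN$ and $\sP$, which is also where the factoriality hypothesis is used. The normalization $\tau(1)=1$ on $\sM$ is needed to get exactly $\|\eta(u)\|_\tau=1$.
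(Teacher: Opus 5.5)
Your proposal is correct and follows essentially the same route as the paper. Both arguments place the index hypothesis on $\sN\subseteq\sP$ without loss of generality, take the unitary $u\in\sP$ with $E_\sN(u)=0$ from Proposition \ref{prop:exist-exp-zero-uni}, use the commuting square to get $E_\sQ(u)=0$ and hence $\|\eta(u)-\eta(E_\sQ(u))\|_\tau=1$, and conclude via Theorem \ref{thm:mashood_taylor} and $\dmt\le\dkk\le 1$. The only differences are cosmetic: you get the upper bound from $\dmt\le\dkk\le1$ instead of the paper's Pythagorean estimate, and you state explicitly the identification $E^{\sP}_{\sN}=E_{\sN}|_{\sP}$, which the paper uses implicitly.
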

\begin{proof}
Without loss of generality, assume that $[\sP: \sN]\in \{2,3\}\cup [4, \infty]$. 
Let $E^{\sM}_{\sN}: \sM \to \sN$, $E^{\sM}_{\sP}: \sM \to \sP$ and $E^{\sP}_{\sN}: \sP \to \sN$ be the unique trace-preserving conditional expectations. Then, for any $x\in \sM$, we have
\[
E^{\sP}_{\sN} \circ E^{\sM}_{\sP}(x) = E^{\sM}_{\sN}(x).
\]
Let $x\in \ball{P}$. Then
\[
\|\eta(x)-\eta(E^{\sM}_{\sQ}(x))\|^2_{\tau}
= \|\eta(x)-\eta(E^{\sM}_{\sQ}(E^{\sM}_{\sP}(x)))\|^2_{\tau}
= \|\eta(x-E^{\sM}_{\sN}(x))\|^2_{\tau}
= \|\eta(x)\|^2_{\tau}-\|\eta(E^{\sM}_{\sN}(x))\|^2_{\tau}
\leq 1.
\]
This implies that
\[
\sup_{x \in \ball{P}}\|\eta(x)-\eta(E^{\sM}_{\sQ}(x))\|_{\tau}\leq 1.
\]

Now, by Proposition \ref{prop:exist-exp-zero-uni}, there exists a unitary $u\in \sP$ such that $E^{\sP}_{\sN}(u)=0$. Hence,
\[
\|\eta(u)-\eta(E^{\sM}_{\sQ}(u))\|^2_{\tau}
= \|\eta(u)\|^2_{\tau}-\|\eta(E^{\sM}_{\sN}(u))\|^2_{\tau}
= \|\eta(u)\|^2_{\tau}-\|\eta(E^{\sP}_{\sN}\circ E^{\sM}_{\sP}(u))\|^2_{\tau}
=1.
\]
Therefore,
\[
\sup_{x \in \ball{P}}\|\eta(x)-\eta(E^{\sM}_{\sQ}(x))\|_{\tau}=1.
\]
The result now follows from Theorem \ref{thm:mashood_taylor} and Proposition \ref{prop:mt_and_kk}.
\end{proof}

\begin{theorem}\label{thm:comm-sq-imply-d-1-ii-1}
    Let $\sN \subseteq \sM$ be $\mathrm{II}_1$ subfactors with $[\mathcal{M}:\mathcal{N}]\ge 4$, and let $\sP,\sQ$ be intermediate $\mathrm{II}_1$ subfactors.
    Suppose that
    \[
        \begin{array}{ccc}
            \sP & \subset & \sM \\
            \cup & & \cup \\
            \sN & \subset & \sQ
        \end{array}
    \]
    is a commuting square. Then
    \[
        \dmt(\sP,\sQ)=1=\dkk(\sP,\sQ).
    \]
\end{theorem}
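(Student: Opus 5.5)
The plan is to reduce the statement to Theorem \ref{thm:dkk-is-1-for-comm-sq-spin}. The quadruple $(\sN\subseteq\sP,\sQ\subseteq\sM)$ is a commuting square of $\II_1$-subfactors, so that theorem applies as soon as one of the indices $[\sP:\sN]$ or $[\sQ:\sN]$ lies in $\{2,3\}\cup[4,\infty]$. So the work is to extract such an index bound from the single hypothesis $[\sM:\sN]\ge 4$ together with the commuting square condition.

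\textbf{Step 1 (index multiplicativity for commuting squares).} First I would show that a commuting square of intermediate subfactors forces
\[
[\sP:\sN]\,[\sQ:\sN]\le[\sM:\sN].
\]
The argument I have in mind goes through Pimsner--Popa bases. Take a basis $\{p_i\}$ of $\sP$ over $\sN$ and a basis $\{q_j\}$ of $\sQ$ over $\sN$. The relation $E_\sP E_\sQ=E_\sN$ gives
\[
E_\sN\bigl((p_iq_j)^*p_kq_l\bigr)=E_\sN\bigl(q_j^*E_\sN(p_i^*p_k)q_l\bigr),
\]
so the products $\{p_iq_j\}$ form an orthonormal-type family over $\sN$ inside $L^2(\sM)$. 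Comparing traces of the corresponding projections then gives the inequality.

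This inequality is an upper bound, not a lower bound, so it does not immediately give what is needed. I would therefore use it together with Jones' index quantization: each of $[\sP:\sN]$ and $[\sQ:\sN]$ lies in $\{4\cos^2(\pi/k):k\ge3\}\cup[4,\infty]$.

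\textbf{Step 2 (case split).} If either index lies in $[4,\infty]$, or equals $2=4\cos^2(\pi/4)$ or $3=4\cos^2(\pi/6)$, Theorem \ref{thm:dkk-is-1-for-comm-sq-spin} finishes the proof.

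In the remaining cases both indices lie in $\{1\}\cup\{4\cos^2(\pi/k): k=5 \text{ or } k\ge7\}$. Here I would not use index values directly but rebuild the key step of the proof of Theorem \ref{thm:dkk-is-1-for-comm-sq-spin}. That proof only needs an element $x\in\ball{\sP}$ with $E_\sN(x)=0$ and $\|\eta(x)\|_\tau=1$, that is, a unitary in $\sP$ orthogonal to $\sN$. So the plan is to produce such a unitary in $\sP$ or in $\sQ$ using the fact that the ambient index $[\sM:\sN]$ is at least $4$.

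\textbf{Step 3 (main obstacle).} The main obstacle is the degenerate situation where $\sP$ and $\sQ$ both have small index over $\sN$, including the possibility $\sP=\sN$. In that case the commuting square is automatic, so it carries no information.

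What I would try first is Popa's relative-dimension result, Theorem 5.2 of \cite{popa-1989-rel-dim}. It gives $1/2\in\Lambda(\sM,\sN)$, hence a self-adjoint unitary $w=2p-1\in\sM$ with $E_\sN(w)=0$. I would then attempt to push $w$ into $\sP$ or $\sQ$ through the commuting-square identity $E_\sP E_\sQ=E_\sN$, or decompose it along the basis $\{p_iq_j\}$ from Step 1.

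If this fails, the fallback is to show that in the small-index situation Step 1 forces $\sP\vee\sQ$ to be strictly smaller than $\sM$. One would then need to verify whether the hypotheses implicitly exclude this configuration. In particular, the intended statement may require nontrivial $\sP,\sQ$, so that at least one of them has index $\ge4$ or index in $\{2,3\}$ over $\sN$. Once a unitary in $\sP$ orthogonal to $\sN$ is available, the rest of the argument copies the proof of Theorem \ref{thm:dkk-is-1-for-comm-sq-spin}, finishing with Theorem \ref{thm:mashood_taylor} and Proposition \ref{prop:mt_and_kk}.
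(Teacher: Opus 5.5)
\textbf{There is a genuine gap in Step 3, and it cannot be closed: in that case the statement is false.}

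Your Steps 1--2 are sound. In the cases they cover, they coincide with the paper's argument. The paper's proof of this theorem is word for word the proof of Theorem \ref{thm:dkk-is-1-for-comm-sq-spin}: it applies Proposition \ref{prop:exist-exp-zero-uni} to $\sN\subseteq\sP$ to obtain a unitary $u\in\sP$ with $E^{\sP}_{\sN}(u)=0$. That proposition needs $[\sP:\sN]\in\{2,3\}\cup[4,\infty]$, and this does not follow from $[\sM:\sN]\ge 4$. So the paper silently skips exactly the case you isolate in Step 3.

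Your proposal leaves that case open. Neither of your two ideas works there:
\begin{itemize}
\item Pushing Popa's $w=2p-1\in\sM$ into $\sP$ via $E_\sP$ destroys unitarity.
\item The fallback guess, that nontrivial $\sP,\sQ$ force one index into $\{2,3\}\cup[4,\infty]$, is false.
\end{itemize}

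\textbf{Counterexamples.} If trivial intermediate subfactors are allowed, take $\sP=\sQ=\sN$. Then $E_\sP E_\sQ=E_\sN=E_\sQ E_\sP$, so this is a commuting square, yet $\dmt(\sP,\sQ)=0$.

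For a nondegenerate counterexample, let $\sN_0\subseteq\sM_0$ be the Jones subfactor of index $\phi^2=4\cos^2(\pi/5)$, and set
\[
\sN=\sN_0\bar{\otimes}\sN_0,\quad \sP=\sM_0\bar{\otimes}\sN_0,\quad \sQ=\sN_0\bar{\otimes}\sM_0,\quad \sM=\sM_0\bar{\otimes}\sM_0 .
\]
This is a commuting square of $\II_1$ factors with $[\sM:\sN]=\phi^4>4$ and $\sP,\sQ\neq\sN$. Both $[\sP:\sN]$ and $[\sQ:\sN]$ equal $4\cos^2(\pi/5)$.

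For $x\in\sP$ the commuting square gives $E_\sQ(x)=E_\sN(x)$. By Russo--Dye and convexity of $x\mapsto\|\eta(x)-\eta(E_\sN(x))\|_\tau$,
\[
\sup_{x\in\ball{\sP}}\|\eta(x)-\eta(E_\sQ(x))\|_\tau^2=1-\inf_{u\in\mathcal U(\sP)}\|\eta(E_\sN(u))\|_\tau^2 .
\]
Now approximate by the Temperley--Lieb commuting squares $\{e_2,\dots,e_k\}''\subseteq\{e_1,\dots,e_k\}''$, tensored with matrix algebras. Applying the CS decomposition blockwise (the Bratteli diagram is $A_4$, with weights $(1,\phi,\phi,1)$), one finds that at every level the minimum of $\|E(u)\|_2^2$ over unitaries is exactly $(1-2\lambda)^2=\phi^{-6}$, where $\lambda=\phi^{-2}$. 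This minimum is already attained by $u=2e_1-1$. By the flip symmetry the other side of the Mashood--Taylor distance gives the same value, so
\[
\dmt(\sP,\sQ)=2\sqrt{\lambda(1-\lambda)}\approx 0.97<1 .
\]

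\textbf{What this means for the proof.} What is missing is not a cleverer construction but an extra hypothesis. What is actually provable is Theorem \ref{thm:dkk-is-1-for-comm-sq-spin}, with the index condition imposed on $[\sP:\sN]$ or $[\sQ:\sN]$; the hypothesis $[\sM:\sN]\ge 4$ alone does not suffice. Your Step 3 correctly located the weak point of the paper's argument, but as written your proposal does not prove the statement, and no proof of it exists.
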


\begin{proof}
    Let $E^{\sM}_{\sN}: \sM \to \sN$, $E^{\sM}_{\sP}: \sM \to \sP$ and $E^{\sP}_{\sN}: \sP \to \sN$ be the unique trace-preserving conditional expectations. Then, for any $x\in \sM$, we have
\[
E^{\sP}_{\sN} \circ E^{\sM}_{\sP}(x) = E^{\sM}_{\sN}(x).
\]
Let $x\in \ball{P}$. Then
\[
\|\eta(x)-\eta(E^{\sM}_{\sQ}(x))\|^2_{\tau}
= \|\eta(x)-\eta(E^{\sM}_{\sQ}(E^{\sM}_{\sP}(x)))\|^2_{\tau}
= \|\eta(x-E^{\sM}_{\sN}(x))\|^2_{\tau}
= \|\eta(x)\|^2_{\tau}-\|\eta(E^{\sM}_{\sN}(x))\|^2_{\tau}
\leq 1.
\]
This implies that
\[
\sup_{x \in \ball{P}}\|\eta(x)-\eta(E^{\sM}_{\sQ}(x))\|_{\tau}\leq 1.
\]

Now, by Proposition \ref{prop:exist-exp-zero-uni}, there exists a unitary $u\in \sP$ such that $E^{\sP}_{\sN}(u)=0$. Hence,
\[
\|\eta(u)-\eta(E^{\sM}_{\sQ}(u))\|^2_{\tau}
= \|\eta(u)\|^2_{\tau}-\|\eta(E^{\sM}_{\sN}(u))\|^2_{\tau}
= \|\eta(u)\|^2_{\tau}-\|\eta(E^{\sP}_{\sN}\circ E^{\sM}_{\sP}(u))\|^2_{\tau}
=1.
\]
Therefore,
\[
\sup_{x \in \ball{P}}\|\eta(x)-\eta(E^{\sM}_{\sQ}(x))\|_{\tau}=1.
\]
The result now follows from Theorem \ref{thm:mashood_taylor} and Proposition \ref{prop:mt_and_kk}.
\end{proof}

\begin{theorem}\label{thm:spin-model-n-times-n}
    Let $u,v\in\hn{n}$ with $u\not\bg v$ such that $\kR_u\cap\kR_v$ is a finite-index subfactor of $\kR$.
    If
    \[
        \begin{array}{ccc}
            \kR_u & \subset & \kR \\
            \cup && \cup \\
            \kR_u\cap\kR_v & \subset & \kR_v
        \end{array}
    \]
    is a commuting square, then
    \[
        \dkk(\kR_u,\kR_v)=1=\dmt(\kR_u, \kR_v).
    \]
\end{theorem}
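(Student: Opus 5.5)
The plan is to reduce the statement to Theorem \ref{thm:comm-sq-imply-d-1-ii-1} (or to Theorem \ref{thm:dkk-is-1-for-comm-sq-spin}). We set $\sM:=\kR$, $\sN:=\kR_u\cap\kR_v$, $\sP:=\kR_u$ and $\sQ:=\kR_v$. By Definition \ref{def:spin-model-subfactor}, $\kR_u$ and $\kR_v$ are hyperfinite $\II_1$-factors contained in $\kR$. By hypothesis, $\sN$ is a finite-index subfactor of $\kR$, so $\sN$ is a $\II_1$-factor and $\sN\subseteq\sP,\sQ\subseteq\sM$ is a quadruple of $\II_1$-factors. The commuting square assumption is exactly the hypothesis required in those theorems. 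It therefore remains only to verify an index condition.

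For the index condition we use multiplicativity of the Jones index. The spin model inclusion $\kR_u\subseteq\kR$ has index $n^2$: the twisted tower in Proposition \ref{prop:nic-unitary-tower} sits inside the standard tower with the same Jones projections, and the tower is shifted by two steps of index $n$. The same holds for $\kR_v$. Since $u\not\bg v$, \cite[Theorem 4.7]{BG2025-1} gives $\kR_u\neq\kR_v$, so $\sN\subsetneq\kR_u$. Hence
\[
[\kR:\sN]=[\kR:\kR_u]\,[\kR_u:\sN]\ge n^2\cdot 2\ge 8\ge 4.
\]
In fact, the bound $[\kR:\sN]\ge[\kR:\kR_u]=n^2\ge 4$ already suffices for $n\ge 2$, which is the standing assumption on $\hn{n}$. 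With this, Theorem \ref{thm:comm-sq-imply-d-1-ii-1} applies directly and gives $\dmt(\kR_u,\kR_v)=1=\dkk(\kR_u,\kR_v)$.

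The main obstacle is justifying rigorously that $[\kR:\kR_u]=n^2$, or at least that it is at least $4$, in the form needed for Theorem \ref{thm:comm-sq-imply-d-1-ii-1}. I would cite the standard spin model computation (Jones, Jones--Sunder, \cite{BG2025-1}). As a fallback, I would use Theorem \ref{thm:dkk-is-1-for-comm-sq-spin}, which needs only $[\kR_u:\sN]\in\{2,3\}\cup[4,\infty]$. By Jones' theorem this index lies in $\{4\cos^2(\pi/k)\}\cup[4,\infty]$, and it is greater than $1$ because the intermediate inclusion is proper. One then has to rule out the values $4\cos^2(\pi/5)$ and $4\cos^2(\pi/k)$ for $k\ge 7$. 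The cleanest way to do this is the first route: the total index $[\kR:\sN]$ is at least $4$, so Theorem \ref{thm:comm-sq-imply-d-1-ii-1} applies directly and no case analysis is needed.
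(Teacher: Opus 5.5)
You follow the paper's own proof: it asserts $[\kR:\kR_u\cap\kR_v]\ge 4$ and appeals to Theorem \ref{thm:comm-sq-imply-d-1-ii-1}. But one step in your index computation is wrong, and you are mistaken that this route avoids the intermediate-index analysis. The paper's proof shares that second gap.

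\textbf{The index of $\kR_u$.} In fact $[\kR:\kR_u]=n$, not $n^2$. The twisted tower lags the standard one by a single step, and each level inclusion $u_{2k-1}\mn{n}^{\otimes k}u_{2k-1}^*\subseteq \mn{n}^{\otimes k}\otimes\Deltan$ has index $n$. This matches $\kR_{\fourier{n}}\rtimes\Z_n\cong\kR$. It also matches the fact that for $n=2$ the intersection of two distinct spin model subfactors has total index $4$, which would be impossible if $[\kR:\kR_{\sfH_\alpha}]$ were $4$. So your shortcut $[\kR:\sN]\ge[\kR:\kR_u]\ge 4$ fails for $n=2,3$. What survives is $[\kR:\sN]=n\,[\kR_u:\sN]\ge 2n\ge 4$. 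This uses Jones' theorem and $\sN\subsetneq\kR_u$, and the latter needs a word: if $\kR_u\subseteq\kR_v$, the equal indices force $\kR_u=\kR_v$.

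\textbf{The hidden intermediate-index condition.} The proof of Theorem \ref{thm:comm-sq-imply-d-1-ii-1} never uses $[\sM:\sN]\ge 4$. It applies Proposition \ref{prop:exist-exp-zero-uni} to $\sN\subseteq\sP$, which requires $[\sP:\sN]\in\{2,3\}\cup[4,\infty]$. That is exactly the condition your fallback paragraph identified, so the case analysis is still needed, only hidden inside the cited theorem. The issue is not cosmetic. Suppose $[\sP:\sN]=\lambda^{-1}=4\cos^2(\pi/5)$ and $w\in\sP$ is a unitary with $E_\sN(w)=0$. Then $e_\sN w e_\sN=0$, so $e_\sN$ and $we_\sN w^*$ are orthogonal projections in $\langle\sP,e_\sN\rangle$. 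Their sum $q$ satisfies $E_\sP(q)=2\lambda$. But $2\lambda$ is not in Popa's list $\{0,\lambda,1-\lambda,1\}$ for this index. Hence no such $w$ exists. For example, the tensor square of such a subfactor gives a commuting square of total index $\approx 6.85$ where the argument of Theorem \ref{thm:comm-sq-imply-d-1-ii-1} breaks down.

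\textbf{What you can actually prove.} The commuting square gives $[\kR_u:\sN]\le[\kR:\kR_v]=n$, because a Pimsner--Popa basis of $\sN\subseteq\kR_u$ stays orthonormal over $\kR_v$.
\begin{itemize}
\item For $n=2$ this forces $[\kR_u:\sN]=2$, and Theorem \ref{thm:dkk-is-1-for-comm-sq-spin} then applies legitimately.
\item For $n=3$ the same conclusion would follow from the paper's unreferenced value $[\kR:\sN]=9$.
\item For $n\ge 4$, neither your argument nor the paper's rules out both $[\kR_u:\sN]$ and $[\kR_v:\sN]$ lying outside $\{2,3\}\cup[4,\infty]$.
\end{itemize}
So for $n\ge 4$ you need additional information about $\kR_u\cap\kR_v$ before the distance conclusion follows.
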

\begin{proof}
    We have

    \[
    [\kR:\kR_u\cap\kR_v]=\begin{cases}
        4 &\text{ if } n = 2\\
       9 &\text{ if } n = 3\\
       \ge 4 & \text{ if } n \ge 4.
    \end{cases}
    \]
    The conclusion now follows immediately from Theorem~\ref{thm:comm-sq-imply-d-1-ii-1}.
\end{proof}

\begin{cor}\label{cor:necessity-for-commuting-square}
Let $u,v\in\hn{n}$ with $u\not\bg v$ and $\dkk(\kR_u,\kR_v)\ne 1$. Then either $\kR_u\cap\kR_v$ is not a finite-index subfactor of $\kR$, or the quadruple $(\kR,\kR_u,\kR_v,\kR_u\cap\kR_v)$ does not form a commuting square.
\end{cor}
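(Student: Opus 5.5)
This follows by contraposition from Theorem~\ref{thm:spin-model-n-times-n}. Let $u,v\in\hn{n}$ with $u\not\bg v$ and $\dkk(\kR_u,\kR_v)\neq 1$. Suppose, for a contradiction, that both alternatives in the conclusion fail. Then $\kR_u\cap\kR_v$ is a finite-index subfactor of $\kR$, and the quadruple $(\kR,\kR_u,\kR_v,\kR_u\cap\kR_v)$ is a commuting square.

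These are exactly the hypotheses of Theorem~\ref{thm:spin-model-n-times-n}. That theorem gives $\dkk(\kR_u,\kR_v)=1$, contradicting our assumption. Hence at least one of the two alternatives must hold.

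The one point to check is that the commuting-square hypothesis is used in the form Theorem~\ref{thm:spin-model-n-times-n} requires. In that theorem, the commuting square is understood with respect to the unique trace-preserving conditional expectations onto $\kR_u$, $\kR_v$ and $\kR_u\cap\kR_v$ (Definition~\ref{def:comm-sq}), with $\kR_u\cap\kR_v$ in the bottom-left corner. The corollary's quadruple is the same diagram, so the hypotheses match.

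Since $\dmt\le\dkk$ (Proposition~\ref{prop:mt_and_kk}), Theorem~\ref{thm:spin-model-n-times-n} also forces $\dmt=1$. The same argument therefore gives the stronger statement with the hypothesis weakened to $\dmt(\kR_u,\kR_v)\neq 1$.
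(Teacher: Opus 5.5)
Your argument is correct and follows the paper's route exactly: the corollary is the contrapositive of Theorem~\ref{thm:spin-model-n-times-n}, and the order of $\kR_u,\kR_v$ in the quadruple does not matter because the commuting-square condition is symmetric in the two middle algebras. One slip in your closing remark: $\dmt\le\dkk$ together with $\dkk=1$ does not force $\dmt=1$; the strengthened version with hypothesis $\dmt(\kR_u,\kR_v)\neq 1$ holds because Theorem~\ref{thm:spin-model-n-times-n} asserts $\dmt=1$ directly, and the paper uses Proposition~\ref{prop:mt_and_kk} in the opposite direction, to pass from $\dmt=1$ to $\dkk=1$.
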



\begin{definition}[Orthogonality, \cite{popa-1983}]
 Let $\sM$ be a finite von Neumann algebra with a faithful normal tracial state $\tau$, and let $\sB_1,\sB_2$ be von Neumann subalgebras of $\sM$. We say that $\sB_1$ is orthogonal to $\sB_2$ if $\tau(b_1b_2)=0$ whenever $b_1\in \sB_1$ and $b_2\in \sB_2$ satisfy $\tau(b_1)=\tau(b_2)=0$. 
\end{definition}

\begin{remark}\label{rem:popa-ortho-not-same}
    Note that, by \cite[Lemma 2.1]{popa-1983}, if $\sP$ and $\sQ$ are orthogonal in $\sM$ in the sense of Popa, then $\sP \cap \sQ = \C$ and $(\C,\sP,\sQ,\sM)$ forms a commuting square. On the other hand, by \cite{BG2025-1}, the intersection $\kR_{\sfH_{\alpha}} \cap \kR_{\sfH_{\beta}}$ is a $\II_1$ subfactor and, in particular, can never be equal to $\C$. Thus, for any $\alpha,\beta \in [0,\pi)$, the Spin model subfactors $\kR_{\sfH_{\alpha}}$ and $\kR_{\sfH_{\beta}}$ can never be orthogonal inside the hyperfinite $\II_1$-factor in the sense of Popa.

On the other hand, Corollary~\ref{cor:dmt-is-same-alpha-2by2} shows that whenever the distance between $\kR_{\sfH_{\alpha}}$ and $\kR_{\sfH_{\beta}}$ is maximal, their interior angle over their intersection is maximal, namely,
\[
\alpha_{\kR_{\sfH_{\alpha}} \cap \kR_{\sfH_{\beta}}}^\kR
\bigl(\kR_{\sfH_{\alpha}},\kR_{\sfH_{\beta}}\bigr)
= \frac{\pi}{2}.
\]
Thus, orthogonality in terms of the interior angle is distinct from, and does not imply, orthogonality in the sense of Popa.
\end{remark}

\begin{theorem}\label{thm:diffuse-comm-sq}
Let $(\sM, \tau)$ be a finite von Neumann algebra, and let $\sP,\sQ\subseteq \sM$ be diffuse orthogonal subalgebras. Then
\[
\dkk(\sP,\sQ)=1=\dmt(\sP,\sQ).
\]
\end{theorem}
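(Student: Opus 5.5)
The plan is to reduce everything to the Mashood--Taylor distance. By Proposition \ref{prop:mt_and_kk} and Remark \ref{KK-facts}(i) we have $\dmt(\sP,\sQ)\le\dkk(\sP,\sQ)\le 1$, so it suffices to show $\dmt(\sP,\sQ)\ge 1$. By Theorem \ref{thm:mashood_taylor} this amounts to exhibiting elements $x\in\ball{\sP}$ with $\|\eta(x)-\eta(E_{\sQ}(x))\|_\tau$ arbitrarily close to (in fact equal to) $1$.

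The first step is to record what orthogonality gives for conditional expectations. Take $x\in\sP$ with $\tau(x)=0$. For any $y\in\sQ$, write $y=(y-\tau(y)1)+\tau(y)1$. Orthogonality gives $\tau(x(y-\tau(y)1))=0$, so $\tau(xy)=\tau(x)\tau(y)=0$. Hence $\eta(x)\perp L^2(\sQ)$, which means $E_{\sQ}(x)=\tau(x)1=0$. This is the commuting square $E_\sQ E_\sP=E_{\C}$, as noted in Remark \ref{rem:popa-ortho-not-same} via \cite[Lemma 2.1]{popa-1983}, so that lemma can be cited instead of redoing the computation. Consequently, for every $x\in\ball{\sP}$ with $\tau(x)=0$,
\[
\|\eta(x)-\eta(E_\sQ(x))\|_\tau=\|\eta(x)\|_\tau .
\]

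The second step, which is the only place diffuseness enters and the main point to check, is to produce a unitary $u\in\sP$ with $\tau(u)=0$. Since $\sP$ is diffuse, it contains no minimal projections. A standard halving argument, or the fact that a diffuse abelian von Neumann subalgebra (for instance a masa of $\sP$) is isomorphic to $L^\infty[0,1]$ with the trace given by a diffuse measure, yields a projection $p\in\sP$ with $\tau(p)=1/2$. If one prefers to avoid the masa argument, the intermediate value property of $\tau$ on the projection lattice of a diffuse abelian subalgebra also suffices. Setting $u=2p-1$ gives a self-adjoint unitary in $\sP$ with $\tau(u)=0$, exactly as in the proof of Proposition \ref{prop:exist-exp-zero-uni}. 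Since $u\in\ball{\sP}$ and $\|\eta(u)\|_\tau^2=\tau(u^*u)=1$, the first step gives $\|\eta(u)-\eta(E_\sQ(u))\|_\tau=1$.

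Therefore $\dmt(\sP,\sQ)\ge 1$, and combining with the upper bound gives $\dkk(\sP,\sQ)=1=\dmt(\sP,\sQ)$. Only the diffuseness of one of the two algebras is actually needed. The sole nontrivial ingredient is the existence of a trace-$1/2$ projection in a diffuse finite von Neumann algebra with the restricted trace. If $\tau$ is not a factor trace, one should note that diffuseness of $\sP$ means there are no minimal projections, and a maximal abelian subalgebra of $\sP$ is then also diffuse. This last point is where I would be most careful.
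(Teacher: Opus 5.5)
Your proposal is correct and follows essentially the same route as the paper. Orthogonality gives $E_\sQ(x)=\tau(x)1$ for $x\in\sP$; you rederive this directly, while the paper cites \cite[Lemma 2.1]{popa-1983}. Any trace-zero unitary in the diffuse algebra $\sP$ then witnesses $\dmt(\sP,\sQ)=1$, and $\dkk(\sP,\sQ)=1$ follows. The only differences are cosmetic: the paper takes a Haar unitary, you take $2p-1$ with $\tau(p)=1/2$, and your diffuse-masa justification of that projection is the right one; drop the unqualified ``halving argument'', since Murray--von Neumann halving fails in diffuse type I algebras such as $L^\infty[0,1]$.
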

\begin{proof}
    Since $\sP$ and $\sQ$ are orthogonal then by Lemma 2.1-((i) $\implies$ (iv)) of \cite{popa-1983}, we get $E_\sQ E_\sP = \tau(-) 1_\sM$. Therefore, for any $x \in \sP$, applying $E_\sQ$ yields:
    $$E_\sQ(x) = E_\sQ(E_\sP(x)) =  \tau(x)1_\sM$$
    
    Because $\sP$ is a diffuse von Neumann algebra, it contains a Haar unitary $u$. This unitary satisfies $u \in \ball{\sP}$, $\Vert{}u\Vert{}_\tau = 1$, and $\tau(u) = 0$. 
    
    Then, we get 
    $$\Vert{}\eta(u - E_\sQ(u))\Vert{}_\tau = \Vert{}\eta(u - \tau(u)1)\Vert{}_\tau = \Vert{}\eta(u - 0)\Vert{}_\tau = 1.$$
    
    Since the supremum over $\ball{\sP}$ cannot exceed $1$ (as $\Vert{}\eta(x - E_\sQ(x))\Vert{}_\tau \le \Vert{}\eta(x)\Vert{}_\tau \le 1$), we immediately obtain
    $$\sup_{x \in \ball{\sP}} \Vert{}\eta(x - E_\sQ(x))\Vert{}_\tau = 1.$$
    Then by Theorem \ref{thm:mashood_taylor} and Proposition \ref{prop:mt_and_kk}, we get 
    $$\dkk(\sP, \sQ)= \dmt(\sP, \sQ) = 1.$$
\end{proof}

For two diffuse subalgebras $\sP, \sQ$, the claim that $\dmt(\sP, \sQ) = 1$ implies orthogonality is not true. Below is a concrete counterexample.

\begin{example} \label{exam:ortho-converse-not-true}
Let $\sM = L^\infty(\mathbb{T}^3, \mu \times \mu \times \mu)$, where $\mathbb{T} = \R / 2\pi\Z$ and $\mu$ is the normalized Haar measure. We equip $\sM$ with the standard tracial state $\tau$ given by integration over $\mathbb{T}^3$.

Define the following von Neumann subalgebras:
\begin{align*}
\sP &= \{ f \in \sM \mid f(r, s, t) = g(r, s) \text{ for some } g \in L^\infty(\mathbb{T}^2) \} \cong L^\infty(\mathbb{T}^2) \otimes 1,\\
\sQ &= \{ f \in \sM \mid f(r, s, t) = h(s, t) \text{ for some } h \in L^\infty(\mathbb{T}^2) \} \cong 1 \otimes L^\infty(\mathbb{T}^2).
\end{align*}
Both $\sP$ and $\sQ$ are diffuse, unital subalgebras of $\sM$. We will show that $\dmt(\sP, \sQ) = 1$, but $\sP$ and $\sQ$ are not orthogonal.
\end{example}

\begin{proof}
Notice that $\sP \cap \sQ$ contains all functions that depend only on the middle variable $s$. 
Let $a(r,s,t) = \sqrt{2}\cos(s)$. Since $a$ depends only on $s$, we have $a \in \sP$ and $a \in \sQ$.

Then,
\[
\tau(a) = \int_{\mathbb{T}} \sqrt{2}\cos(s) \,d\mu(s) = 0.
\]
Therefore we have
\[
\tau(aa) = \tau(a^2) = \int_{\mathbb{T}} 2\cos^2(s) \,d\mu(s) = 1 \neq 0= \tau(a) \tau(a).
\]

\noindent\textbf{Claim: }$\dmt(\sP, \sQ) = 1$.\\
Let $u(r,s,t) = e^{ir}$. Clearly, $u \in \sP$ and $\|u\|_\infty = 1$, so $u \in \ball{\sP}$. Applying $E_\sQ$, we get:
\[
E_\sQ(u)(s,t) = \int_{\mathbb{T}} e^{ir} \,d\mu(r) = 0.
\]
Consequently, $\|u - E_\sQ(u)\|_2 = \|u\|_2 = 1$.
Since $\|x - E_\sQ(x)\|_2 \leq \|x\|_2 \leq 1$ for all $x \in \ball{\sP}$, we conclude that
\[
\sup_{x\in \ball{\sP}}\|\eta(x)-\eta(E_{\sQ}(x))\|_{\tau} = 1.
\]

By symmetry, choosing $v(r,s,t) = e^{it} \in \ball{\sQ}$ yields $E_\sP(v) = 0$ and $\|v - E_\sP(v)\|_2 = 1$, which gives
\[
\sup_{z\in \ball{\sQ}}\|\eta(z)-\eta(E_{\sP}(z))\|_{\tau} = 1.
\]

By Theorem \ref{thm:mashood_taylor}, we conclude that $\dmt(\sP, \sQ)= 1$.
\end{proof}

    \section{Distances Between Conjugate Masas in Free Group Factors}
    \label{sec:gr-vNa-dist}

In this section, we shift our focus to the rich, non-commutative geometric landscape of group von Neumann algebras, specifically within the free group factor $L(\mathbb{F}_2)$. While subgroups naturally generate von Neumann subalgebras, the full lattice of subalgebras is vastly larger and often heavily decoupled from the underlying group structure. To rigorously quantify the relative positions and structural differences of these algebras, we must first develop the framework of Fourier analysis on discrete groups. 

The primary purpose of this section is to leverage this Fourier-analytic machinery to explicitly compute the Mashood--Taylor distance between the canonical masa $L(\langle a \rangle)$ and its unitary conjugates $u L(\langle a \rangle) u^*$ (see Theorem \ref{thm:mt_is_trace}). By analyzing the Fourier coefficients of these unitaries, we will prove that this geometric distance continuously attains values in the entire interval $[0,1]$ (see Corollary \ref{cor:mt-attains-val}). Furthermore, we culminate the section with a striking structural result: we construct maximally distant conjugate masas (where the distance is exactly $1$) that do not originate from any subgroup of $\mathbb{F}_2$, highlighting the purely analytic flexibility of these inclusions (see Theorem \ref{thm:dkk-1-for-non-tiv-alg}).

\subsection{Fourier Theory on Discrete Group}

Let $G$ be a countable discrete group. We define the von Neumann algebra associated with $G$ as follows:
\[
L(G):= \{\lambda_g: g\in G \}'' \subseteq \sB(\ell^2(G))
\]

For $x\in L(G)$ we define a complex valued function $\widehat{x}$ on $G$ by
\[
\widehat{x}(g):=\tau(\lambda_g^* x)\,\quad (g\in G).
\]
We define $\widehat{L(G)}:= \{ \widehat{x}: x\in L(G) \}$.

\begin{lem}\label{lem:fourier_coeff}
    Let $G$ be a countable discrete group. Let $\Phi: \eta(L(G)) \to \ell^{2}(G)$
    be the linear isometry defined by
    \[
        \Phi(\eta(x)) := x\delta_e, \qquad x\in L(G).
    \]
    Then the following hold:
    \begin{enumerate}
        \item[(i)] $\Phi$ extends to a unitary operator from $L^2(L(G),\tau)$ onto $\ell^2(G)$. Consequently,
        \[
            \{\eta(\lambda_g):g\in G\}
        \]
        is an orthonormal basis for $L^2(L(G),\tau)$.

        \item[(ii)] For every $x\in L(G)$, the $g$-th Fourier coefficient $\langle \eta(x),\eta(\lambda_g)\rangle_{L^2}$ of $\eta(x)$ is equal to $\widehat{x}(g).$ Consequently,
        \begin{align*}
            \eta(x)
            = \sum_{g\in G}\widehat{x}(g)\eta(\lambda_g),\quad \text{ and }\quad
            \eta(x^*)
            = \sum_{g\in G}
            \overline{\widehat{x}(g)}
            \eta(\lambda_{g^{-1}}),
        \end{align*}
        where both series converge in the Hilbert space $L^2(L(G),\tau)$.
    \end{enumerate}
\end{lem}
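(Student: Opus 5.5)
The plan is to work through the standard identification of $L^2(L(G),\tau)$ with $\ell^2(G)$ via the vector $\delta_e$, which is cyclic and separating for $L(G)$ and implements the trace. First I check that $\Phi$ is well defined and isometric. For $x\in L(G)$ the trace is $\tau(x)=\langle x\delta_e,\delta_e\rangle$, so
\[
\|x\delta_e\|^2=\langle x^*x\delta_e,\delta_e\rangle=\tau(x^*x)=\|\eta(x)\|_\tau^2 .
\]
Hence $\eta(x)\mapsto x\delta_e$ is a well-defined linear isometry on the dense subspace $\eta(L(G))$, and it extends by continuity to an isometry $\Phi$ on all of $L^2(L(G),\tau)$.

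Next I show that $\Phi$ is surjective. Its range is closed, being the image of a complete space under an isometry. It contains $\lambda_g\delta_e=\delta_g$ for every $g\in G$, and these vectors form the canonical orthonormal basis of $\ell^2(G)$. So the range is all of $\ell^2(G)$, and $\Phi$ is unitary. Since $\Phi(\eta(\lambda_g))=\delta_g$ and unitaries preserve orthonormal bases, $\{\eta(\lambda_g):g\in G\}$ is an orthonormal basis of $L^2(L(G),\tau)$. This proves (i).

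For (ii), the inner product formula gives directly
\[
\langle\eta(x),\eta(\lambda_g)\rangle_\tau=\tau(\lambda_g^*x)=\widehat{x}(g).
\]
The first expansion of $\eta(x)$ is then the Fourier expansion with respect to the orthonormal basis from (i), and it converges in $L^2$ by Parseval's identity. For the expansion of $\eta(x^*)$, I compute its coefficients:
\[
\widehat{x^*}(h)=\tau(\lambda_h^*x^*)=\tau\big((x\lambda_h)^*\big)=\overline{\tau(x\lambda_h)}=\overline{\tau(\lambda_hx)}=\overline{\widehat{x}(h^{-1})}.
\]
Here I use $\tau(y^*)=\overline{\tau(y)}$, the trace property, and $\lambda_h=\lambda_{h^{-1}}^*$. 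Substituting $h=g^{-1}$ reindexes the sum into the stated form, and convergence again follows from Parseval.

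None of these steps is a real obstacle. The only points that need care are the well-definedness of the extension, which the isometry handles, and the surjectivity of $\Phi$, which follows from the density of the span of $\{\delta_g\}$ together with the closedness of the range.
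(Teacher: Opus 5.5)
Your proposal is correct and follows essentially the same route as the paper: extend the isometry $\eta(x)\mapsto x\delta_e$ using that its range contains every $\delta_g=\lambda_g\delta_e$, and identify the Fourier coefficients as $\tau(\lambda_g^*x)$. Your version is more complete than the paper's sketch. You read the coefficient directly off the GNS inner product, and you explicitly verify $\widehat{x^*}(h)=\overline{\widehat{x}(h^{-1})}$ to justify the expansion of $\eta(x^*)$, which the paper leaves implicit.
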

\begin{proof}
\noindent \textbf{(i).}  Since  the range of $\Phi$ is dense and it is isometry so it extends to an unitary from $L^2(L(G),\tau)$ onto $\ell^2(G)$.\\  
   \noindent \textbf{(ii).} First note that $\tau(\lambda_g^*x)$ is equal to the $g$-th Fourier coefficient of $x\delta_e$ for any $x \in L(G)$. Indeed,
   \[
   \langle x\delta_e, \delta_g \rangle_{\ell^2}= \langle x\delta_e, \lambda_g \delta_e \rangle_{\ell^2}=\langle \lambda_g^* x \delta_e, \delta_e \rangle_{\ell^2}= \tau(\lambda_g^* x).
   \]
   Hence we get
   \begin{align*}
       \widehat{x} = \Phi^{-1} (x\delta_e) = \Phi^{-1} \left( \sum_{g\in G} \tau(\lambda_g^* x) \delta_g   \right) = \sum_{g\in G} \tau(\lambda_g^* x) \Phi^{-1}(\delta_g) = \sum_{g\in G} \tau(\lambda_g^* x) \Phi^{-1} (\lambda_g \delta_e) = \sum_{g\in G} \tau(\lambda_g^* x) \widehat{\lambda_g}
   \end{align*}
\end{proof}

\begin{lem}\label{lem:mult_is_convolution}
Let $G$ be a countable discrete group. Then for all $x \in L(G)$, we have $\widehat{x} \in \ell^2(G)$. Moreover, $\ell^1(G) \subseteq \widehat{L(G)} \subseteq \ell^2(G)$. Consequently, for all $x, y \in L(G)$ and for every $g \in G$, the convolution sum defining $(\widehat{x} \ast \widehat{y})(g)$ converges absolutely, and we have
$$\widehat{x} * \widehat{y} = \widehat{xy}.$$
In particular, $\widehat{x} \ast \widehat{y} \in \ell^2(G)$.
\end{lem}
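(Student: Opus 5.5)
The plan is to use Lemma~\ref{lem:fourier_coeff} to control the Fourier coefficients in $\ell^2$, and then to compute the Fourier coefficient of a product directly from the trace.

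\textbf{Step 1: $\widehat{x}\in\ell^2(G)$.} By Lemma~\ref{lem:fourier_coeff}(ii), $\widehat{x}(g)=\langle x\delta_e,\delta_g\rangle$. Hence $\widehat{x}$ is exactly the coordinate function of the vector $x\delta_e\in\ell^2(G)$, and
\[
\|\widehat{x}\|_2=\|x\delta_e\|=\|\eta(x)\|_\tau .
\]
This gives $\widehat{L(G)}\subseteq\ell^2(G)$.

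\textbf{Step 2: $\ell^1(G)\subseteq\widehat{L(G)}$.} Take $f\in\ell^1(G)$ and set $x=\sum_g f(g)\lambda_g$. This series converges absolutely in operator norm, since $\|\lambda_g\|=1$, so $x\in L(G)$. Because $\tau(\lambda_g^*\lambda_h)=\delta_{g,h}$ and $\tau$ is norm continuous, we get $\widehat{x}(g)=f(g)$.

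\textbf{Step 3: $\widehat{xy}=\widehat{x}*\widehat{y}$.} Write the convolution as
\[
(\widehat{x}*\widehat{y})(g)=\sum_h \widehat{x}(h)\,\widehat{y}(h^{-1}g).
\]
Absolute convergence follows from Cauchy--Schwarz. The function $h\mapsto\widehat{y}(h^{-1}g)$ is a reindexing of $\widehat{y}$ (via the bijection $h\mapsto h^{-1}g$), so it lies in $\ell^2$ with the same norm. Thus the sum is bounded by $\|\widehat{x}\|_2\|\widehat{y}\|_2$.

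To identify the sum with $\widehat{xy}(g)$, compute
\[
\widehat{xy}(g)=\tau(\lambda_g^*xy)=\langle y\delta_e,\,x^*\lambda_g\delta_e\rangle=\langle y\delta_e,\,x^*\delta_g\rangle .
\]
Expand both vectors in the basis $\{\delta_k\}$.
\begin{itemize}
\item The $k$-th coordinate of $y\delta_e$ is $\widehat{y}(k)$.
\item The $k$-th coordinate of $x^*\delta_g$ is $\langle x^*\delta_g,\delta_k\rangle=\langle \delta_g, x\delta_k\rangle$.
\end{itemize}
Next use that $x$ commutes with right translations $\rho_k$ (since $L(G)\subseteq R(G)'$). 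This gives $x\delta_k=x\rho_{k^{-1}}\delta_e=\rho_{k^{-1}}x\delta_e$. Its $g$-coordinate is therefore $\widehat{x}(gk^{-1})$.

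Parseval then gives
\[
\widehat{xy}(g)=\sum_k \widehat{x}(gk^{-1})\,\widehat{y}(k).
\]
Substituting $h=gk^{-1}$, so that $k=h^{-1}g$, this is exactly $(\widehat{x}*\widehat{y})(g)$.

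Finally, applying Step 1 to $xy\in L(G)$ shows that $\widehat{x}*\widehat{y}=\widehat{xy}\in\ell^2(G)$.

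The only delicate point is justifying the right-translation commutation. It needs just that the right regular representation commutes with $\lambda$, and hence with $L(G)$ by the bicommutant theorem.
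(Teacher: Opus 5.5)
Your proof is correct and follows essentially the paper's route: you identify $\widehat{x}$ with the coordinate vector of $x\delta_e$, get absolute convergence from Cauchy--Schwarz, and expand $\widehat{xy}(g)=\langle y\delta_e,x^*\delta_g\rangle$ in the basis $\{\delta_k\}$. The differences are minor: you evaluate $\langle x\delta_k,\delta_g\rangle=\widehat{x}(gk^{-1})$ via commutation with right translations, where the paper implicitly uses traciality $\tau(\lambda_g^*x\lambda_k)=\tau(\lambda_k\lambda_g^*x)$, and your Step 2 supplies the inclusion $\ell^1(G)\subseteq\widehat{L(G)}$, which the paper's proof never actually addresses.
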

\begin{proof}
From Lemma \ref{lem:fourier_coeff}-(ii) we get that $\widehat{x}(g) = \langle \eta(x), \eta(\lambda_g)\rangle_{L^2}$. Thus 
    \[
    \sum_g |\widehat{x}(g)|^2 = \|\eta(x)\|_\tau^2 <\infty.
    \]
     The convolution of $\widehat{x}$ and $\widehat{y}$ at $g \in G$ is defined as
     $$(\widehat{x} * \widehat{y})(g) = \sum_{h \in G} \widehat{x}(h) \widehat{y}(h^{-1}g).$$
    By Cauchy-Schwartz inequality, this convolution sum converges absolutely, i.e., $(\widehat{x} * \widehat{y})(g)$ exists for all $g \in G$. 
    
    Now for any $x,y \in L(G)$ and $g\in G$, we have 

    \begin{align*}
        \widehat{xy}(g) = \langle \eta(xy), \eta(\lambda_g)\rangle_{L^2}= \langle xy \delta_e, \delta_g \rangle_{\ell^2}
    &=\langle y \delta_e, x^*\delta_g\rangle_{\ell^2}\\
    &= \sum_{h \in G} \langle y \delta_e, \delta_h \rangle_{\ell^2} \langle  \delta_h, x^*\delta_g \rangle_{\ell^2}\\
    &= \sum_h \widehat{y}(h) \langle \lambda_g^* x \lambda_h \delta_e, \delta_e \rangle =\sum_h \widehat{y}(h)\widehat{x}(gh^{-1})=(\widehat{x} \ast \widehat{y})(g).
    \end{align*}

This completes the proof!
\end{proof}

\begin{prop}\label{prop:fourier-facts}
    Let $G$ be a countable discrete group. Then the following statements hold:
    \begin{itemize}
        \item[(i)] $\widehat{L(G)}$ is an involutive algebra with respect to the convolution product, where the involution $\xi^*$ of $\xi:G\to\C$ is defined by
        \[
            \xi^*(g):=\overline{\xi(g^{-1})}, \qquad g\in G.
        \]
        
        \item[(ii)] The map
        \begin{align*}
            \fF_G:L(G) &\to \widehat{L(G)}\\
            x &\mapsto \widehat{x}
        \end{align*}
        is a $*$-algebra isomorphism. The map $\fF_G$ is called the \emph{Fourier transform} on $G$.
        \item[(iii)] For any $g\in G$, $\fF_G(\lambda_g) = \widehat{\lambda_g} = \delta_g$.
        \item[(iv)] For any $g\in G$ and $x \in L(G)$, we have $\fF_G(\lambda_g x) = \fF_G(x) (g^{-1}\, - )$.
        \item[(v)] For any $x \in L(G)$, we have
        \[
       x = 0 \iff  \forall g\in G,\,\, \widehat{x}(g) = 0.
        \]
        \item[(vi)] For a subgroup $H \le G$, we have $\fF_{H} = \fF_G |_{L(H)}$. (Notice that $\widehat{\lambda}_h(g)=\tau(\lambda_{g^{-1}h}) =0$ for any $g \not \in H$.) 
    \end{itemize}
\end{prop}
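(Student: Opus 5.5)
The plan is to derive all six items from Lemma~\ref{lem:fourier_coeff} and Lemma~\ref{lem:mult_is_convolution}, which already give $\widehat{x}(g)=\langle \eta(x),\eta(\lambda_g)\rangle_{\tau}$, the expansion $\eta(x)=\sum_g \widehat{x}(g)\eta(\lambda_g)$, and $\widehat{xy}=\widehat{x}*\widehat{y}$. I would begin with the injectivity statement (v), since everything else rests on it. If $\widehat{x}(g)=0$ for all $g$, then $\eta(x)=0$ by the orthonormal basis expansion in Lemma~\ref{lem:fourier_coeff}(i), so $\tau(x^*x)=0$. Faithfulness of $\tau$ then gives $x=0$. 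The converse direction is trivial.

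Next I would handle (ii) and (i) together. The map $\fF_G$ is clearly linear, and Lemma~\ref{lem:mult_is_convolution} shows it is multiplicative from the product on $L(G)$ to convolution. For the involution, I would compute $\widehat{x^*}(g)=\tau(\lambda_g^*x^*)=\overline{\tau(x\lambda_g)}$. The trace property then gives $\overline{\tau(x\lambda_g)}=\overline{\tau(\lambda_{g^{-1}}^*x)}=\overline{\widehat{x}(g^{-1})}=(\widehat{x})^*(g)$. By (v), $\fF_G$ is injective, and it is surjective onto $\widehat{L(G)}$ by definition. Item (i) now follows by transport of structure. Since $\widehat{L(G)}$ is the image of a $*$-algebra under a map intertwining product with convolution and adjoint with $\xi\mapsto\xi^*$, it is closed under these operations. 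Associativity and the involution axioms also transfer, because $\widehat{x}*\widehat{y}=\widehat{xy}$ and $(\widehat{x})^*=\widehat{x^*}$ hold identically.

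The remaining items are direct computations. For (iii), $\widehat{\lambda_g}(h)=\tau(\lambda_{h^{-1}g})$, which equals $1$ if $h=g$ and $0$ otherwise. For (iv), $\widehat{\lambda_gx}(h)=\tau(\lambda_{h^{-1}}\lambda_g x)=\tau(\lambda_{(g^{-1}h)}^*x)=\widehat{x}(g^{-1}h)$. Equivalently, this is $\delta_g*\widehat{x}$ via (iii) and multiplicativity. For (vi), I would use that the trace on $L(H)$ is the restriction of the trace on $L(G)$, since both are the vector state at $\delta_e$. Then $\fF_H(x)(h)=\fF_G(x)(h)$ for $h\in H$. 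For $x\in L(H)$ and $g\notin H$, I claim $\fF_G(x)(g)=0$. This holds on the dense span of $\{\lambda_h\}_{h\in H}$ as noted in the statement. It extends to all of $L(H)$ by $\|\cdot\|_\tau$-continuity of $x\mapsto\tau(\lambda_g^*x)$ together with Kaplansky density. So $\fF_G|_{L(H)}$ coincides with $\fF_H$, regarding functions on $H$ as functions on $G$ extended by zero.

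The main obstacle I foresee is bookkeeping rather than depth. Care is needed with the convention $\widehat{x}(g)=\tau(\lambda_g^*x)$ in (iv), to make sure it yields $g^{-1}h$ and not $hg^{-1}$. In (vi), the identification of $\widehat{L(H)}$ as functions on $H$ versus on $G$ must also be made precise. The inclusion $\ell^1(G)\subseteq\widehat{L(G)}$ from Lemma~\ref{lem:mult_is_convolution} is not needed here.
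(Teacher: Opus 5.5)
Your proposal is correct. The paper gives no proof of this proposition: it is listed as a collection of standard facts, and the only justification offered is the parenthetical computation in (vi). Your route fills this in the natural way. You establish (v) first, from the orthonormal expansion of Lemma~\ref{lem:fourier_coeff} and faithfulness of $\tau$. You then obtain (ii) from Lemma~\ref{lem:mult_is_convolution} together with the computation $\widehat{x^*}(g)=\overline{\tau(x\lambda_g)}=\overline{\widehat{x}(g^{-1})}$, and deduce (i) by transport of structure. Your index bookkeeping in (iii) and (iv) is right, giving $g^{-1}h$ as required.

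In (vi) you also supply something the paper's parenthetical leaves out. The observation $\widehat{\lambda_h}(g)=0$ for $g\notin H$ only covers the generators, and an extension to arbitrary $x\in L(H)$ is genuinely needed. Kaplansky density is more than that step requires. The functional $x\mapsto\tau(\lambda_g^*x)=\langle x\delta_e,\delta_g\rangle$ is already weak-operator continuous, and the span of $\{\lambda_h\}_{h\in H}$ is weak-operator dense in $L(H)$ by the bicommutant theorem.

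Your remark that ``both traces are the vector state at $\delta_e$'' is made precise by one identification. The algebra $L(H)\subseteq\mathcal{B}(\ell^2(H))$ must be identified with the von Neumann subalgebra of $L(G)$ generated by $\{\lambda_h\}_{h\in H}$. This works because $\lambda^G|_H$ is an amplification of $\lambda^H$ over the right cosets $Hk$, and under it $\delta_e\in\ell^2(G)$ corresponds to $\delta_e\in\ell^2(H)$.
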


\begin{lem}\label{lem:fourier-trans-clasical}
    Let $G$ be a discrete abelian group. Its Pontryagin dual $\widehat{G}$ is the group of all characters $\chi: G \to \mathbb{T}$, which is a compact abelian group equipped with a normalized Haar measure.

    Let $\fF_G^c: \ell^2(G) \to L^2(\widehat{G})$ be defined as follows:
     $$\fF_G^c(\xi)(\chi) = \sum_{g \in G} \xi(g)\chi(g),$$
   for all $\xi \in \ell^2(G)$ and $\chi \in \widehat{G}$, where the sum converges in $L^2(\widehat{G})$. Then the restriction of $\fF_G^c$ to the subspace $\widehat{L(G)}$ yields a $*$-algebra isomorphism onto $L^\infty(\widehat{G})$.
\end{lem}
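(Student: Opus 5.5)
The plan is to reduce the statement to the classical Fourier--Plancherel picture for the compact abelian group $\widehat{G}$. By Lemma \ref{lem:fourier_coeff} and Proposition \ref{prop:fourier-facts}, $\fF_G$ already identifies $L(G)$ with the convolution algebra $\widehat{L(G)}\subseteq\ell^2(G)$. We then compose with the classical transform $\fF_G^c$. For abelian $G$, the classical Plancherel theorem gives that $\fF_G^c:\ell^2(G)\to L^2(\widehat{G})$ is a unitary, since the characters $\chi\mapsto\chi(g)$, $g\in G$, form an orthonormal basis of $L^2(\widehat{G})$. So the sum defining $\fF_G^c(\xi)$ converges in $L^2$, and $\fF_G^c$ is injective and isometric. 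It remains to show three things: that $\fF_G^c$ maps $\widehat{L(G)}$ into $L^\infty(\widehat{G})$, that it is onto $L^\infty(\widehat{G})$, and that it is multiplicative and $*$-preserving.

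The cleanest route is to realize everything spatially. Let $V:=\fF_G^c\circ\Phi$, a unitary $L^2(L(G),\tau)\to L^2(\widehat{G})$, or equivalently a unitary $\ell^2(G)\to L^2(\widehat{G})$ sending $\delta_g$ to the character function $e_g(\chi)=\chi(g)$. Then $V\lambda_g V^*$ is multiplication by $e_g$, since $e_g e_h=e_{gh}$. Hence $VL(G)V^*$ is the von Neumann algebra generated by $\{M_{e_g}:g\in G\}$. The characters separate points of $\widehat{G}$, so by Stone--Weierstrass their span is dense in $C(\widehat{G})$. Its weak closure is therefore the full multiplication algebra $L^\infty(\widehat{G})$, which is a masa in $\sB(L^2(\widehat{G}))$. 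This gives $\operatorname{Ad}V: L(G)\to L^\infty(\widehat{G})$, a normal $*$-isomorphism onto.

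Finally we must check that $\operatorname{Ad}V$ agrees with $\fF_G^c\circ\fF_G$ restricted to $L(G)$. For $x\in L(G)$, write $VxV^*=M_f$ with $f\in L^\infty(\widehat{G})$. Applying this to the constant function $1=V\delta_e$ gives
\[
f = M_f 1 = Vx\delta_e = \fF_G^c\big(\Phi(\eta(x))\big)=\fF_G^c(\widehat{x}),
\]
using $\Phi(\eta(x))=x\delta_e=\sum_g\widehat{x}(g)\delta_g$ from Lemma \ref{lem:fourier_coeff}(ii). Thus $\fF_G^c(\widehat{x})=f$ is essentially bounded. The map $\widehat{x}\mapsto f$ is the composite of the $*$-isomorphisms $\fF_G^{-1}$ (Proposition \ref{prop:fourier-facts}(ii)) and $\operatorname{Ad}V$, so it is a $*$-algebra isomorphism of $\widehat{L(G)}$, with convolution and $\xi^*$, onto $L^\infty(\widehat{G})$. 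As a sanity check, $\fF_G^c(\xi^*)=\overline{\fF_G^c(\xi)}$ follows directly from $\chi(g^{-1})=\overline{\chi(g)}$.

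The main obstacle is the surjectivity and identification step: showing that the von Neumann algebra generated by the characters is exactly $L^\infty(\widehat{G})$, and that the $L^2$-limit defining $\fF_G^c(\widehat{x})$ coincides with the multiplier symbol. I would handle this through the computation $f=M_f1$, which avoids any pointwise convergence issues. The density argument needs the normalized Haar measure being a regular Borel measure on the compact group, which is standard.
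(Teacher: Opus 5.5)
Your argument is correct, but it runs in the opposite direction from the paper's.

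The paper works at the level of formulas. It checks $\fF_G^c(\xi*\eta)=\fF_G^c(\xi)\,\fF_G^c(\eta)$ and $\fF_G^c(\xi^*)=\overline{\fF_G^c(\xi)}$ by expanding and regrouping the double series $\sum_g\sum_h \xi(h)\eta(h^{-1}g)\chi(g)$. It then shows $f=\fF_G^c(\widehat{x})\in L^\infty(\widehat{G})$ by proving that $M_f=\fF_G^c\, x\,(\fF_G^c)^{-1}$ is bounded by $\|x\|$. Finally, it asserts surjectivity by noting that every $M_f$ conjugates back to a bounded operator on $\ell^2(G)$.

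You instead start from the spatial isomorphism $\operatorname{Ad}V$ and compute $V\lambda_gV^*=M_{e_g}$. Stone--Weierstrass and the weak density of $C(\widehat{G})$ in $L^\infty(\widehat{G})$ then give $VL(G)V^*=L^\infty(\widehat{G})$. Only after that do you identify the symbol, via $f=M_f1=Vx\delta_e=\fF_G^c(\widehat{x})$.

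What your route buys:
\begin{itemize}
\item Multiplicativity and $*$-preservation come for free from $\operatorname{Ad}V$ and $\fF_G$. You never rearrange series of $\ell^2$ functions that converge only in $L^2$, which the paper's computation does without justification.
\item It genuinely proves surjectivity. The paper's last step leaves implicit why $(\fF_G^c)^{-1}M_f\fF_G^c$ lies in $L(G)$; that needs something like the commutation theorem, or exactly your generation argument.
\end{itemize}

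The paper's route is more hands-on and explicit, and it avoids bicommutant and density considerations, at the cost of those analytic shortcuts.

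One small notational point: you define $V=\fF_G^c\circ\Phi$ on $L^2(L(G),\tau)$ but then apply it to $x\delta_e\in\ell^2(G)$. Simply take $V=\fF_G^c$ on $\ell^2(G)$, which is what your computation actually uses.
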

\begin{proof}
    For $\xi, \eta \in \widehat{L(G)}$, their multiplication is given by convolution: $(\xi * \eta)(g) = \sum_{h \in G} \xi(h)\eta(h^{-1}g)$. Then, we get
    \begin{align*}\fF_G^c(\xi * \eta)(\chi) &= \sum_{g \in G} (\xi * \eta)(g) \chi(g) \&= \sum_{g \in G} \left( \sum_{h \in G} \xi(h)\eta(h^{-1}g) \right) \chi(g).\end{align*}
    Since $\chi$ is a group homomorphism, $\chi(g) = \chi(h \cdot h^{-1}g) = \chi(h)\chi(h^{-1}g)$, we have
    \begin{align*}\fF_G^c(\xi * \eta)(\chi) 
    &= \sum_{g \in G} \sum_{h \in G} \xi(h)\chi(h) \cdot \eta(h^{-1}g)\chi(h^{-1}g) 
    \\ &= \left( \sum_{h \in G} \xi(h)\chi(h) \right) \left( \sum_{k \in G} \eta(k)\chi(k) \right) \quad (\text{letting } k = h^{-1}g) 
    \\ &= \fF_G^c(\xi)(\chi) \cdot \fF_G^c(\eta)(\chi).\end{align*}
    Thus, convolution on $G$ translates to pointwise multiplication on $\widehat{G}$.

    Since the involution in $\widehat{L(G)}$ is $\xi^*(g) = \overline{\xi(g^{-1})}$, we get
    \begin{align*}\fF_G^c(\xi^{*})(\chi) &= \sum_{g \in G} \xi^{*}(g) \chi(g) = \sum_{g \in G} \overline{\xi(g^{-1})} \chi(g).\end{align*}
    Let $h = g^{-1}$. Because $\chi$ is a character mapping into $\mathbb{T}$, $\chi(h^{-1}) = \overline{\chi(h)}$. The sum becomes
    \begin{align*}\sum_{h \in G} \overline{\xi(h)} \chi(h^{-1}) = \sum_{h \in G} \overline{\xi(h)} \,\,\, \overline{\chi(h)} = \overline{ \sum_{h \in G} \xi(h) \chi(h) } = \overline{\fF_G^c(\xi)(\chi)}.\end{align*}
    This shows that the involution translates to pointwise complex conjugation on $\widehat{G}$.

For any $x \in L(G)$, let $f = \fF_G^c(\widehat{x})$ . 
     We define the multiplication operator $M_f: L^2(\widehat{G}) \to L^2(\widehat{G})$ by $M_f(\psi) = f \cdot \psi$. To show $f \in L^\infty(\widehat{G})$, it is enough to prove $M_f$ is a bounded operator on $L^2(\widehat{G})$. Take any $\psi \in L^2(\widehat{G})$. Because $\fF_G^c$ is unitary, there exists a unique $\eta \in \ell^2(G)$ such that $\psi = \fF_G^c(\eta)$, we have
    $$M_f(\psi) = \fF_G^c(\widehat{x}) \cdot \fF_G^c(\eta) = \fF_G^c(\widehat{x} * \eta) = \fF_G^c(x(\eta))\qquad (\text{since\,}\, x(\eta) = \widehat{x} * \eta)$$
    Then, we get
    \begin{align*}\| M_f(\psi) \|_{L^2(\widehat{G})} 
    &= \| \fF_G^c(x(\eta)) \|_{L^2(\widehat{G})} \\
    &= \| x(\eta) \|_{\ell^2(G)} \quad (\text{since } \fF_G^c \text{ is unitary}) \\
&\le \| x \|_{\sB(\ell^2)} \cdot \| \eta \|_{\ell^2(G)} \quad (\text{since } x \text{ is bounded}) \\
&= \| x \|_{\sB(\ell^2)} \cdot \| \psi \|_{L^2(\widehat{G})} \quad (\text{since } \eta = (\fF_G^c)^{-1}(\psi)).
\end{align*}
This shows that for all $\psi \in L^2(\widehat{G})$,
$$\Vert{} M_f(\psi) \Vert{}_{L^2(\widehat{G})} \le \Vert{} x \Vert{} \cdot \Vert{} \psi \Vert{}_{L^2(\widehat{G})}.$$
Therefore, $M_f$ is a bounded operator on $L^2(\widehat{G})$. Hence, $f \in L^\infty(\widehat{G})$. Conversely, any $f \in L^\infty(\widehat{G})$ generates a bounded multiplication operator $M_f$, which conjugates back via $(\fF_G^c)^{-1}$ to a bounded convolution operator on $\ell^2(G)$. Thus, the map is a bijection onto $L^\infty(\widehat{G})$.
\end{proof}

\begin{prop}\label{prop:unitary-with-fourier-coef}
    Let $G$ be a countable discrete group with $|G|\ge 2$. Then there always exists a unitary $u\in L(G)$ with at least two non-zero distinct Fourier coefficients.
\end{prop}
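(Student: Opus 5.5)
Our approach is an explicit construction inside the two-element subgroup, or the infinite cyclic subgroup, generated by a single non-identity element. We then transfer the result to $L(G)$ using Proposition \ref{prop:fourier-facts}(vi). We read ``two non-zero distinct Fourier coefficients'' as: there are $g\neq h$ in $G$ with $\widehat{u}(g),\widehat{u}(h)$ nonzero and $\widehat{u}(g)\neq\widehat{u}(h)$.

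Since $|G|\ge 2$, fix $g\in G$ with $g\neq e$. We split into two cases according to the order of $g$.

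\textbf{Case 1: $g$ has finite order $m\ge 2$.} We take
\[
u=\cos\theta\,\lambda_e+i\sin\theta\,\lambda_g
\]
when $m=2$. Then $\lambda_g$ is a self-adjoint unitary, and we get $u^*u=\cos^2\theta+\sin^2\theta+i\cos\theta\sin\theta(\lambda_g-\lambda_g)=1$. Its Fourier coefficients are $\widehat{u}(e)=\cos\theta$ and $\widehat{u}(g)=i\sin\theta$. By Proposition \ref{prop:fourier-facts}(iii) and linearity of $\fF_G$, these are both nonzero and distinct for, say, $\theta=\pi/4$.

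For general finite $m$, we instead work in $L(\langle g\rangle)\cong L^\infty(\widehat{\Z_m})=\C^m$ via Lemma \ref{lem:fourier-trans-clasical}. A unitary there is any function $f$ from the $m$-th roots of unity to $\mathbb{T}$. We choose $f$ taking value $1$ at $1$ and $-1$ elsewhere, or any non-constant unimodular $f$. Its Fourier coefficients are $\widehat{u}(g^k)=\frac1m\sum_\zeta f(\zeta)\zeta^{-k}$. For this $f$ we get $\widehat{u}(e)=\frac{2-m}{m}$ and $\widehat{u}(g^k)=\frac{2}{m}$ for $k\neq 0$. These are nonzero and distinct when $m\ge 3$. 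For $m=2$ we use the explicit $u$ above.

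\textbf{Case 2: $g$ has infinite order.} Then $L(\langle g\rangle)\cong L^\infty(\mathbb{T})$ by Lemma \ref{lem:fourier-trans-clasical}. We take the unimodular trigonometric polynomial
\[
f(z)=\tfrac{1}{\sqrt2}(1+iz),
\]
which is unimodular on $\mathbb{T}$ only when $z$ is real, so we adjust it. The simplest honest choice is the finite Blaschke-free form $f(z)=\cos\theta+i\sin\theta\,\frac{z+\bar z}{2}\cdot$, which is not unimodular either. So we abandon polynomials and take $u=e^{itx}$, where $x=\lambda_g+\lambda_g^*$ is self-adjoint. This gives
\[
f(z)=e^{it(z+\bar z)}=\sum_k i^kJ_k(2t)z^k,
\]
so $\widehat{u}(e)=J_0(2t)$ and $\widehat{u}(g)=iJ_1(2t)$. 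For small $t>0$, both Bessel values are nonzero, and one is real while the other is non-real, so they are distinct.

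In both cases we then regard $u\in L(\langle g\rangle)\subseteq L(G)$. By Proposition \ref{prop:fourier-facts}(vi) its Fourier coefficients on $G$ agree with those computed on the subgroup, and vanish off it. This proves the claim.

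The main obstacle is the infinite-order case, because a unimodular trigonometric polynomial on $\mathbb{T}$ with two terms must be a monomial, so no finite combination works. We circumvent this by using the functional calculus $e^{it(\lambda_g+\lambda_g^*)}$ and the Jacobi--Anger expansion. The only input needed is that $J_0(2t)$ and $J_1(2t)$ are nonzero for small $t>0$, which holds since $J_0(0)=1$ and $J_1(s)\sim s/2$.
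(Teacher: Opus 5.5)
Your proof is correct. One small difference in reading the statement: the paper, and its later use in Lemma~\ref{lem:non-subgr-alg}, only needs non-zero coefficients at two distinct group elements, whereas you additionally show that the two values differ.

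The paper's route is uniform. For any $g\neq e$ it takes $u_t=\exp\bigl(it(\lambda_g+\lambda_{g^{-1}})\bigr)$ and expands the exponential series. This shows $\widehat{u_t}(e)\to 1$ and $\widehat{u_t}(g)/t\to i(1+\delta_{g^2,e})\neq 0$ as $t\to 0$, so both coefficients are non-zero for small $t>0$. No case split on the order of $g$ is needed.

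Your route compares as follows:
\begin{itemize}
\item In the infinite-order case you use exactly this unitary, but compute its coefficients exactly via the Jacobi--Anger expansion, $\widehat{u}(g^k)=i^kJ_k(2t)$. This gives explicit formulas, at the cost of importing facts about Bessel functions.
\item In the finite-order case you use elementary explicit unitaries. Your $m=2$ unitary is in fact the paper's $u_t$ with $\theta=2t$. Your $m\ge 3$ unitary is just $2p-1$ for the projection $p=\frac1m\sum_{k=0}^{m-1}\lambda_{g^k}$, so it can be checked directly without Pontryagin duality.
\end{itemize}
The paper's argument is shorter and independent of the order of $g$; yours is more concrete.

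Two clean-up points:
\begin{itemize}
\item Delete the abandoned attempts at the start of Case~2 (the non-unimodular candidates for $f$).
\item Drop the parenthetical ``or any non-constant unimodular $f$''. It is false: $f(\zeta)=\zeta$ gives $u=\lambda_g$, which has only one non-zero Fourier coefficient.
\end{itemize}
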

\begin{proof}
Since $\vert{}G\vert{} \ge 2$, there exists at least one element $g \in G$ such that $g \neq e$. Consider the element $h \in L(G)$ defined by
$$h := \lambda_g + \lambda_{g^{-1}}.$$
Since $\lambda_{x}^* = \lambda_{x^{-1}}$ for any $x \in G$, we have
$$h^* = \lambda_g^* + \lambda_{g^{-1}}^* = \lambda_{g^{-1}} + \lambda_g = h.$$
Thus, $h$ is a self-adjoint element of $L(G)$. For any real number $t \in \R$, we define$$u_t := \exp(it h) = \sum_{n=0}^\infty \frac{(it)^n}{n!} h^n.$$

Since $h$ is self-adjoint, standard continuous functional calculus dictates that $u_t$ is a unitary in $L(G)$ for all $t \in \R$. Explicitly, $u_t^* = \exp(-it h^*) = \exp(-it h) = u_{-t}$, and $u_t u_{-t} = u_{-t} u_t = \lambda_e$.

We will now analyze the Fourier coefficients of $u_t$ at the identity $e$ and at $g$. Recall that the canonical trace $\tau: L(G) \to \C$ is norm-continuous, and multiplication by $\lambda_x^*$ is norm-continuous. Therefore, we can compute the Fourier coefficients by passing $\tau$ inside the norm-convergent series
$$\widehat{u_t}(x) = \tau(\lambda_{x^{-1}} u_t) = \sum_{n=0}^\infty \frac{(it)^n}{n!} \tau(\lambda_{x^{-1}} h^n), \qquad x \in G.$$
The Fourier coefficient at $x = e$
$$\widehat{u_t}(e) = \sum_{n=0}^\infty \frac{(it)^n}{n!} \tau(h^n).$$ Since the function $t \mapsto \widehat{u_t}(e)$ is continuous, at $t=0$, we have:
$$\widehat{u_0}(e) = \tau(h^0) = \tau(\lambda_e) = 1.$$
Then by continuity, there exists some $\epsilon_1 > 0$ such that for all $t \in (-\epsilon_1, \epsilon_1)$, we have $\widehat{u_t}(e) \neq 0$.

Next, let us examine the Fourier coefficient at $x = g$
$$\widehat{u_t}(g) = \tau(\lambda_{g^{-1}} u_t) = \sum_{n=0}^\infty \frac{(it)^n}{n!} \tau(\lambda_{g^{-1}} h^n).$$
Let us compute the first two terms ($n=0$ and $n=1$) of this series: For $n = 0$:
$$\tau(\lambda_{g^{-1}} h^0) = \tau(\lambda_{g^{-1}}).$$
Since $g \neq e$, we have $g^{-1} \neq e$, and thus $\tau(\lambda_{g^{-1}}) = 0$. For $n = 1$
\begin{align*}
\tau(\lambda_{g^{-1}} h) &= \tau(\lambda_{g^{-1}} (\lambda_g + \lambda_{g^{-1}})) \\ &= \tau(\lambda_e + \lambda_{g^{-2}}) \\ &= \tau(\lambda_e) + \tau(\lambda_{g^{-2}}) \\ &= 1 + \delta_{g^2, e},
\end{align*}
where $\delta_{g^2, e} = 1$ if $g^2 = e$, and $0$ otherwise. In either case, the quantity $c := 1 + \delta_{g^2, e}$ is strictly positive (it is either $1$ or $2$). Substituting these back into the series for $\widehat{u_t}(g)$, we get
$$\widehat{u_t}(g) = 0 + it \cdot c + \sum_{n=2}^\infty \frac{(it)^n}{n!} \tau(\lambda_{g^{-1}} h^n).$$
For $t \neq 0$, we get
$$\frac{\widehat{u_t}(g)}{t} = ic + t \sum_{n=2}^\infty \frac{i^n t^{n-2}}{n!} \tau(\lambda_{g^{-1}} h^n).$$

Since the remaining series converges and defines a continuous function of $t$, taking the limit as $t \to 0$ yields
$$\lim_{t \to 0} \frac{\widehat{u_t}(g)}{t} = ic \neq 0.$$Because this limit is non-zero, $\widehat{u_t}(g)$ cannot be identically zero on any open interval around $0$. Thus, there exists some $\epsilon_2 > 0$ such that for all $t \in (0, \epsilon_2)$, we have $\widehat{u_t}(g) \neq 0$. Let $t_0$ be any real number such that $0 < t_0 < \min(\epsilon_1, \epsilon_2)$. For this $t_0$, the element $u := u_{t_0} \in L(G)$ is a unitary and satisfies $\widehat{u}(e) \neq 0$ and $\widehat{u}(g) \neq 0$.

This completes the proof. 
\end{proof}

\begin{lem}\label{lem:non-subgr-alg}
    Let $\mF_2 = \langle a, b\rangle$ be the free group on two generators and $u\in L(\langle b \rangle)$ be a unitary as in the Proposition \ref{prop:unitary-with-fourier-coef}. Then $u L(\langle a \rangle)u^*$ is not of the form $L(H)$ for any subgroup $H$ of $\mF_2$.
\end{lem}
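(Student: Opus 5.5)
Suppose, for contradiction, that $uL(\langle a\rangle)u^*=L(H)$ for some subgroup $H\le\mF_2$. Since $u\in L(\langle b\rangle)$ has at least two nonzero Fourier coefficients, Proposition \ref{prop:unitary-with-fourier-coef} lets us pick $g=b^k\neq e$ with $\widehat{u}(e)\neq 0$ and $\widehat{u}(b^k)\neq0$; both facts come from the proof of that proposition, with the generator $b$ of $\langle b\rangle$. The goal is to show that $L(H)$ must contain some $\lambda_h$ with $h\in H$, and that the Fourier support of $u\lambda_a u^*$ is incompatible with this.

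\textbf{Step 1.} We first identify $H$. Recall that $L(H)$ is spanned (weakly) by $\{\lambda_h: h\in H\}$, and that for $x\in L(H)$ the support of $\widehat{x}$ lies in $H$ by Proposition \ref{prop:fourier-facts}(vi). Hence $H$ is the union of the supports of $\widehat{x}$ for $x\in L(H)$. Put $x=u\lambda_a^n u^*$ and expand $u=\sum_{j}\widehat{u}(b^j)\lambda_{b^j}$ and $u^*=\sum_j\overline{\widehat{u}(b^{j})}\lambda_{b^{-j}}$ in $L^2$. This gives
\[
\widehat{u\lambda_a^n u^*}(b^i a^n b^{-j})=\widehat{u}(b^i)\overline{\widehat{u}(b^j)}.
\]
The key point is that in the free group the words $b^ia^nb^{-j}$ for $n\neq0$ are reduced and pairwise distinct, so no cancellation occurs and the formula holds literally. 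We should justify it through the convolution identity of Lemma \ref{lem:mult_is_convolution}, approximating $u$ in $\|\cdot\|_2$ together with the distinctness of the words. It follows that $H\ni a$ (take $i=j=0$) and $H\ni b^ka$, hence $b^k=(b^ka)a^{-1}\in H$.

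\textbf{Step 2.} We derive the contradiction. Since $b^k\in H$, we get $\lambda_{b^k}\in L(H)=uL(\langle a\rangle)u^*$, so $u^*\lambda_{b^k}u\in L(\langle a\rangle)$. But $u$ and $\lambda_{b^k}$ both lie in the abelian algebra $L(\langle b\rangle)$, so $u^*\lambda_{b^k}u=\lambda_{b^k}$. This would place $\lambda_{b^k}\in L(\langle a\rangle)$, which is impossible: its Fourier transform is $\delta_{b^k}$, supported outside $\langle a\rangle$, contradicting Proposition \ref{prop:fourier-facts}(vi).

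The main obstacle is Step 1, specifically the rigorous Fourier-coefficient computation of $u\lambda_au^*$ when $u$ has possibly infinite support. I plan to handle it by writing $\widehat{u\lambda_a u^*}(g)=\tau(\lambda_g^* u\lambda_a u^*)=\langle \lambda_a u^*\delta_e, u^*\lambda_g\delta_e\rangle$ and expanding both vectors in $\ell^2$. The vector $\lambda_a u^*\delta_e$ is supported on $a\langle b\rangle$, while $u^*\lambda_g\delta_e$ is supported on $\langle b\rangle g$. For $g=b^iab^{-j}$ these supports meet only at the single point $ab^{-j}$, which yields the product formula directly. The fact that $H$ is recovered from Fourier supports is immediate from Proposition \ref{prop:fourier-facts}(vi), since $\lambda_h$ with $h\notin H$ is orthogonal to $L^2(L(H))$.
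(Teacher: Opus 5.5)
Your argument is correct and is essentially the paper's. You read off Fourier coefficients of $u\lambda_a u^*$ at the reduced words $b^iab^{-j}$; your support-intersection computation of $\langle \lambda_a u^*\delta_e, u^*\lambda_g\delta_e\rangle$ is a clean way to justify the product formula. This puts two elements of $H$ differing by a nontrivial power of $b$ into $H$. Commutativity of $L(\langle b\rangle)$ then forces $\lambda_{b^d}\in L(\langle a\rangle)$.

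The one thing to repair is that you anchor at $\widehat{u}(e)\neq 0$ to get $a\in H$. That non-vanishing comes from the particular construction in the proof of Proposition \ref{prop:unitary-with-fourier-coef}, not from the property ``at least two nonzero Fourier coefficients''. It also fails for the unitary with $\tau(u)=0$ to which the paper applies this lemma in Theorem \ref{thm:dkk-1-for-non-tiv-alg}.

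Your own formula removes the restriction. If $\widehat{u}(b^{j_1})\neq 0$ and $\widehat{u}(b^{j_2})\neq 0$ with $j_1\neq j_2$, then $b^{j_1}ab^{-j_1}$ and $b^{j_2}ab^{-j_1}$ lie in $H$. Hence $b^{j_2}ab^{-j_1}(b^{j_1}ab^{-j_1})^{-1}=b^{j_2-j_1}\in H$. This is exactly what the paper does with $b^{j_1}ab^{-j_2}$ and $b^{j_1}ab^{-j_1}$.
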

\begin{proof}
    On contrary, suppose there exists a subgroup $H\leq\mF_2$ such that
    $$u L(\langle a \rangle) u^* = L(H).$$
    
    Since $u \in L(\langle b \rangle)$ has at least two non-zero Fourier coefficients, by Proposition \ref{prop:fourier-facts}-(vi) its Fourier transform $\widehat{u}$ is supported on $\langle b \rangle$. Thus, there exist distinct integers $j_1, j_2 \in \Z$ such that $\widehat{u}(b^{j_1}) \neq 0$ and $\widehat{u}(b^{j_2}) \neq 0$. Consider the element $x = u \lambda_a u^*$. Since $\lambda_a \in L(\langle a \rangle)$, it follows that $x \in u L(\langle a \rangle) u^* = L(H)$. By Lemma \ref{lem:mult_is_convolution}, we get 
    $$\widehat{x} = \widehat{u \lambda_a u^*} = \widehat{u} * \widehat{\lambda_a} * \widehat{u^*}.$$
    
    Then, for any $g\in \mF_2$, we have
    $$\widehat{x}(g) = \sum_{y, z \in \mF_2} \widehat{u}(y) \widehat{\lambda_a}(y^{-1} z) \widehat{u^*}(z^{-1} g).$$
    
    By Proposition \ref{prop:fourier-facts}-(iii), $\widehat{\lambda_a} = \delta_a$. Thus, the term $\widehat{\lambda_a}(y^{-1} z)$ is non-zero if and only if $y^{-1} z = a$, which means $z = ya$. The sum simplifies to
    $$\widehat{x}(g) = \sum_{y \in \mF_2} \widehat{u}(y) \widehat{u^*}(a^{-1} y^{-1} g).$$
    
    Letting $w = a^{-1} y^{-1} g$, this sum is exactly over all pairs $(y, w)$ such that $y a w = g$. Furthermore, by Proposition \ref{prop:fourier-facts}-(i),(ii), $\fF_G$ is a $*$-algebra isomorphism, so $\widehat{u^*} = (\widehat{u})^*$. This means
    $$\widehat{u^*}(w) = (\widehat{u})^*(w) = \overline{\widehat{u}(w^{-1})}.$$
    Since $\widehat{u}$ is supported entirely on $\langle b \rangle$, $\widehat{u}(y)$ is non-zero only if $y = b^m$ for some $m \in \Z$. Similarly, $\widehat{u^*}(w)$ is non-zero only if $w^{-1} \in \langle b \rangle$, which means $w = b^n$ for some $n \in \Z$. Now, let us evaluate $\widehat{x}$ at the group element $g = b^{j_1} a b^{-j_2}$. The non-zero terms in our convolution sum require $b^m a b^n = b^{j_1} a b^{-j_2}$. Because $\mF_2$ is a free group on generators $a$ and $b$, elements have a unique reduced word representation. This forces $m = j_1$ and $n = -j_2$. Hence, there is exactly one non-zero term in the sum
    $$\widehat{x}(b^{j_1} a b^{-j_2}) = \widehat{u}(b^{j_1}) \widehat{u^*}(b^{-j_2}) = \widehat{u}(b^{j_1}) \overline{\widehat{u}(b^{j_2})}.$$
    
    Because both $\widehat{u}(b^{j_1})$ and $\widehat{u}(b^{j_2})$ are non-zero, this product is non-zero. By Proposition \ref{prop:fourier-facts}-(vi), since $x \in L(H)$, its Fourier transform $\widehat{x}$ is supported on $H$. Therefore, the fact that $\widehat{x}(b^{j_1} a b^{-j_2}) \neq 0$ implies
    $$h_1 := b^{j_1} a b^{-j_2} \in H.$$
    
    By identical reasoning, setting $g = b^{j_1} a b^{-j_1}$, the unique solution for $b^m a b^n = b^{j_1} a b^{-j_1}$ is $m = j_1, n = -j_1$, giving
    $$\widehat{x}(b^{j_1} a b^{-j_1}) = \widehat{u}(b^{j_1}) \overline{\widehat{u}(b^{j_1})} = \vert{}\widehat{u}(b^{j_1})\vert{}^2 \neq 0.$$
    Thus, we also have
    $$h_2 := b^{j_1} a b^{-j_1} \in H.$$
    Because $H$ is a subgroup, $h_2^{-1} h_1 \in H$. Also
    $$h_2^{-1} h_1 = (b^{j_1} a b^{-j_1})^{-1} (b^{j_1} a b^{-j_2}) = (b^{j_1} a^{-1} b^{-j_1}) (b^{j_1} a b^{-j_2}) = b^{j_1} a^{-1} a b^{-j_2} = b^{j_1 - j_2}.$$
    Let $d = j_1 - j_2$. Since $j_1 \neq j_2$, $d \neq 0$, we have  $\lambda_{b^d} \in L(H)$. By our initial assumption, $L(H) = u L(\langle a \rangle) u^*$. This means there exists some $y \in L(\langle a \rangle)$ such that
    $$\lambda_{b^d} = u y u^*.$$
    Multiplying on the left by $u^*$ and on the right by $u$, we obtain
  $  y = u^* \lambda_{b^d} u.$
    Notice that both $u$ and $\lambda_{b^d}$ belong to $L(\langle b \rangle)$. Because the subgroup $\langle b \rangle \cong \Z$ is abelian, $L(\langle b \rangle)$ is commutative. Thus, $u$ and $\lambda_{b^d}$ commute, which gives
    $$y = \lambda_{b^d} u^* u = \lambda_{b^d}.$$
    
    But $y$ was chosen from $L(\langle a \rangle)$. Therefore, $\lambda_{b^d} \in L(\langle a \rangle)$. By Proposition \ref{prop:fourier-facts}-(vi), any element belonging to $L(\langle a \rangle)$ must have its Fourier transform supported entirely on $\langle a \rangle$. However, by Proposition \ref{prop:fourier-facts}-(iii), the Fourier transform of $\lambda_{b^d}$ is $\delta_{b^d}$. Since $d \neq 0$ and $a, b$ are free generators, the element $b^d$ does not belong to the subgroup $\langle a \rangle$. Thus, $\delta_{b^d}$ is not supported on $\langle a \rangle$, which is a contradiction.
    
    Consequently, our assumption was false, and $u L(\langle a \rangle) u^*$ is not of the form $L(H)$ for any subgroup $H \leq \mF_2$
\end{proof}

\begin{prop}\label{prop:unitary_conj_not_subgp}
    There exists unitary $u \in L(\mF_2)$ such that $uL(\langle a \rangle)u^*$ is not of the form $L(H)$ for any subgroup $H$ of $\mF_2$. 
\end{prop}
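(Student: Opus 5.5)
This proposition is essentially an existence statement that follows by combining Proposition \ref{prop:unitary-with-fourier-coef} with Lemma \ref{lem:non-subgr-alg}. The plan is to produce a unitary in the abelian subalgebra $L(\langle b \rangle)\subseteq L(\mF_2)$ having at least two distinct nonzero Fourier coefficients. Then Lemma \ref{lem:non-subgr-alg} applies verbatim.

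First, apply Proposition \ref{prop:unitary-with-fourier-coef} to the group $G=\langle b\rangle\cong\Z$. It is countable, discrete, and satisfies $|G|\ge 2$. This yields a unitary $u\in L(\langle b\rangle)$ with $\fF_{\langle b\rangle}(u)$ nonzero at two distinct points. Concretely, the proof of that proposition takes $u=\exp(it_0(\lambda_b+\lambda_{b^{-1}}))$ for small $t_0>0$, with $\widehat{u}(e)\neq0$ and $\widehat{u}(b)\neq 0$.

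Second, transport this to $\mF_2$ via Proposition \ref{prop:fourier-facts}-(vi). Since $\fF_{\langle b\rangle}=\fF_{\mF_2}|_{L(\langle b\rangle)}$, the Fourier coefficients of $u$ regarded in $L(\mF_2)$ are the same. They are supported on $\langle b\rangle$ and nonzero at two distinct powers $b^{j_1}\neq b^{j_2}$. Because $L(\langle b\rangle)$ is a unital von Neumann subalgebra, $u$ is also a unitary of $L(\mF_2)$.

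Finally, invoke Lemma \ref{lem:non-subgr-alg}. It gives that $uL(\langle a\rangle)u^*\neq L(H)$ for every subgroup $H\le\mF_2$, which is the required conclusion.

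There is no real obstacle here. The only point needing care is to check that the hypothesis of Lemma \ref{lem:non-subgr-alg} ("a unitary as in Proposition \ref{prop:unitary-with-fourier-coef}") refers to the Fourier transform on $\mF_2$ rather than on $\langle b\rangle$. This identification is exactly the content of Proposition \ref{prop:fourier-facts}-(vi).
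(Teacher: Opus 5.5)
Your proposal is correct and follows the same route as the paper. The paper's proof simply invokes Lemma~\ref{lem:non-subgr-alg}, whose hypothesis is a unitary in $L(\langle b\rangle)$ supplied by Proposition~\ref{prop:unitary-with-fourier-coef}; you additionally spell out how that unitary is obtained and why its Fourier coefficients computed in $L(\langle b\rangle)$ agree with those in $L(\mF_2)$, via Proposition~\ref{prop:fourier-facts}-(vi).
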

\begin{proof}
    Follows from Lemma \ref{lem:non-subgr-alg}.
\end{proof}

In particular, the Proposition \ref{prop:unitary_conj_not_subgp} shows that there exist masas in \(L(G)\) that are not associated with any subgroup of \(G\); that is, they cannot be expressed in the form \(L(H)\) for any subgroup \(H\leq G\).

\begin{prop}\label{prop:haar-uni-non-trivial}
    Let $G$ be a countable discrete group with $|G|\ge 4$. Then there always exists a unitary $u\in L(G)$ with at least two non-zero distinct Fourier coefficients such that $\tau(u)=0$
\end{prop}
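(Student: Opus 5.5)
We will build $u$ explicitly as a finite linear combination of group unitaries. The case split is on whether $G$ contains an element of order at least $3$ (possibly infinite), or every nontrivial element has order $2$.

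\textbf{Case 1: some $g\in G$ has order $m\ge 3$ (or infinite order).} We work inside the abelian algebra $L(\langle g\rangle)$. If $g$ has infinite order, $L(\langle g\rangle)\cong L^\infty(\mathbb{T})$ by Lemma \ref{lem:fourier-trans-clasical}. We then look for a unimodular function $f$ on $\mathbb{T}$ with $\int f=0$ whose Fourier series has at least two nonzero coefficients. A concrete choice is
\[
f(z)=\frac{z+z^2}{|z+z^2|}=z^{3/2}\,\frac{z^{1/2}+z^{-1/2}}{|z^{1/2}+z^{-1/2}|}.
\]
More simply, we can take $f=\mathrm{sgn}(\cos\theta)$, or better $f(\theta)=e^{i\theta}$ on half the circle and $-e^{i\theta}$ on the other half. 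Any such $f$ works provided the mean is zero and the function is not a monomial. Since a monomial has modulus constant and a single coefficient, taking $f(\theta)=\mathbf{1}_{[0,\pi)}-\mathbf{1}_{[\pi,2\pi)}$ gives a self-adjoint unitary with $\int f=0$ and infinitely many nonzero coefficients.

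If $g$ has finite order $m\ge3$, then $L(\langle g\rangle)\cong\C^m$ via the characters. We need a unitary, that is, a vector of unimodular values $(\mu_0,\dots,\mu_{m-1})$ indexed by characters, with zero average (zero trace) and not a single multiple of a character. For $m\ge3$ we take $\mu_k=\omega^{k}$ for $k\ne0$, where $\omega=e^{2\pi i/m}$, and adjust two entries to keep the sum zero while breaking the single-character form. Concretely, for $m=3$ we take the vector $(1,e^{2\pi i/3},e^{4\pi i/3})$ but swap the last two entries. If that still corresponds to one group element, we instead use the transposition of two roots among the $m\ge3$ roots of unity. A permuted vector of roots of unity always has sum $0$, and a non-character permutation has at least two nonzero Fourier coefficients.

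\textbf{Case 2: every nontrivial element has order $2$.} Then $G$ is abelian, an $\mathbb{F}_2$-vector space, and $|G|\ge4$ provides a subgroup $K\cong\Z_2\times\Z_2=\{e,g,h,gh\}$. Set
\[
u=\tfrac12(\lambda_g+\lambda_h+\lambda_{gh}-\lambda_e)\cdot(-1)
\]
or a similar combination, and check unitarity in $L(K)\cong\C^4$ via characters. We want a character vector of signs $(\pm1)$ summing to zero, for instance $(1,1,-1,-1)$, that is not $\pm$ a character. The characters are $(1,1,1,1)$, $(1,1,-1,-1)$, and so on, and $(1,1,-1,-1)$ is itself a character. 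So we use a vector like $(1,i,-1,-i)$, which is unimodular with sum $0$. It is not a multiple of a character, because characters are $\pm1$-valued and this vector has ratio $i$ between two entries. Hence its inverse transform has at least two nonzero coefficients, and by Proposition \ref{prop:fourier-facts}(vi) those coefficients persist in $L(G)$.

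\textbf{Main obstacle.} The real work is the bookkeeping showing the chosen element is not a scalar multiple of a single $\lambda_x$, which is what ``two nonzero coefficients'' demands. The uniform argument is this: a unitary with exactly one nonzero Fourier coefficient is $c\lambda_x$, whose image under $\fF^c$ is $c\chi(x)$, namely $c$ times a character value function. Comparing value ratios rules this out for our explicit vectors. The trace condition is simply the average of the values, which we arranged to be $0$.
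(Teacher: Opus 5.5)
Your case split has a genuine hole at $m=3$. When $g$ has order $3$, every permutation of $(1,\omega,\omega^2)$ has the form $j\mapsto \omega^{aj+b}$, because every permutation of $\Z_3$ is affine. So your swapped vector $(1,\omega^2,\omega)$ is just the image of $\lambda_{g^2}$, and any other transposition gives $\omega^b\lambda_{g^a}$, again with a single Fourier coefficient. In fact nothing inside $L(\langle g\rangle)\cong\C^3$ can work. Three unimodular numbers with zero sum form an equilateral triangle, so every trace-zero unitary of $L(\langle g\rangle)$ equals $c\lambda_g$ or $c\lambda_{g^2}$ with $|c|=1$. This is exactly why the statement fails for $G=\Z_3$, and why the hypothesis is $|G|\ge 4$. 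Consequently, groups whose nontrivial elements all have order $2$ or $3$, with at least one of order $3$, are covered by neither of your cases; examples are $S_3$, $A_4$, $\Z_3\times\Z_3$ and $\bigoplus_{\mathbb{N}}\Z_3$.

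The missing idea is to leave the cyclic subgroup. For instance, if $G$ is not cyclic, choose $h\ne e$ and $g\notin\langle h\rangle$. Take a unitary $w\in L(\langle h\rangle)$ with two nonzero Fourier coefficients (Proposition~\ref{prop:unitary-with-fourier-coef}), and put $u=\lambda_g w$. By Proposition~\ref{prop:fourier-facts}(iv),(vi), $\widehat u(x)=\widehat w(g^{-1}x)$. Hence $u$ inherits two nonzero coefficients translated by $g$, while $\tau(u)=\widehat w(g^{-1})=0$ because $g^{-1}\notin\langle h\rangle$. This disposes of all non-cyclic groups at once, your Case 2 included, and leaves only $\Z$ and $\Z_m$ with $m\ge 4$.

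For those remaining cases your ideas are sound but need tightening. For $m\ge4$, the phrase ``a transposition'' is not automatically non-affine. For $m=4$, the transposition $(0\,2)$ is $j\mapsto 3j+2$ and yields $-\lambda_{g^3}$. Use the adjacent transposition $(0\,1)$ instead. It fixes the consecutive residues $2,3$, and an affine map fixing two consecutive residues is the identity. So the resulting vector is not of the form $c\,(\omega^{kj})_j$ and has trace zero.

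In the infinite-order case, your final choice $\mathbf{1}_{[0,\pi)}-\mathbf{1}_{[\pi,2\pi)}$ works; its coefficient at odd $n$ is $2/(i\pi n)$. However, delete the earlier candidates. The function $(z+z^2)/|z+z^2|$ has mean $-2/(3\pi)$, and $\pm e^{i\theta}$ on complementary half-circles has mean of modulus $2/\pi$.

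For comparison, the paper proves only the infinite-order case. It uses the Blaschke product $u=f(\lambda_a)$ with $f(z)=z\frac{z-1/2}{1-z/2}$, for which $\tau(u)=f(0)=0$ and every Taylor coefficient from $z$ onward is nonzero. Your sign function is an equally good choice there. Once the order-$3$ gap is repaired, your finite-order analysis goes beyond what the paper establishes.
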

\begin{proof}
    We only prove this in the case when $G$ contains an element of infinite order.

Let $a \in G$ have infinite order, so $\langle a \rangle \cong \Z$. By the Fourier transform (see Lemma \ref{lem:fourier-trans-clasical}), $L(\langle a \rangle) \cong L^\infty(\mathbb{T})$, mapping $\lambda_a$ to the coordinate function $z$.

Consider the Blaschke product analytic on the unit disk
$$f(z) = z \frac{z - 1/2}{1 - z/2}.$$
For any $z \in \mathbb{T}$, $\vert{}f(z)\vert{} = 1$, which implies the operator $u = f(\lambda_a)$ is a unitary in $L(\langle a \rangle)$. Expanding $f(z)$ as a Taylor series around 0 (which coincides with its Fourier series on $\mathbb{T}$):
$$f(z) = z \left(z - \frac{1}{2}\right) \sum_{n=0}^\infty \left(\frac{z}{2}\right)^n = -\frac{1}{2} z + \frac{3}{4} z^2 + \frac{3}{8} z^3 + \dots$$
Since the constant term is $f(0) = 0$, we have $\widehat{u}(e) = 0$. The coefficients for $z, z^2, z^3, \dots$ correspond to the Fourier coefficients of $u$ at $a, a^2, a^3, \dots$ which are non-zero. Thus $u$ has infinitely many non-zero Fourier coefficients.

\end{proof}

\begin{lem}\label{lem:unitary_conjugate}
    Let $G$ be a countable discrete group and $H$ be its subgroup. Then for any $x\in L(H)$ and unitary $u\in L(G)$ we have
    \[
    \widehat{u^*xu}(g) = \sum_{s \in G} \sum_{h \in H} \overline{\widehat{u}(s)} \widehat{x}(h) \widehat{u}(h^{-1}sg).
    \]
\end{lem}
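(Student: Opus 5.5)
The plan is to expand $u^*xu$ as a triple convolution and then use the support of $\widehat{x}$ to reduce the middle sum to $H$. By Proposition \ref{prop:fourier-facts}(ii) and Lemma \ref{lem:mult_is_convolution}, the Fourier transform turns products into convolutions, so
\[
\widehat{u^*xu}=\widehat{u^*}*\widehat{x}*\widehat{u}.
\]
Since $\fF_G$ is a $*$-isomorphism, $\widehat{u^*}=(\widehat{u})^*$, that is, $\widehat{u^*}(t)=\overline{\widehat{u}(t^{-1})}$.

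First I will expand the iterated convolution at a point $g\in G$:
\[
\widehat{u^*xu}(g)=\sum_{t\in G}\sum_{k\in G}\widehat{u^*}(t)\,\widehat{x}(t^{-1}k)\,\widehat{u}(k^{-1}g).
\]
Next I substitute $\widehat{u^*}(t)=\overline{\widehat{u}(t^{-1})}$ and put $s=t^{-1}$. This turns the first factor into $\overline{\widehat{u}(s)}$, and the middle factor becomes $\widehat{x}(sk)$.

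Then I change variables $h=sk$, so that $k=s^{-1}h$. The last factor becomes $\widehat{u}(h^{-1}sg)$. By Proposition \ref{prop:fourier-facts}(vi), $\widehat{x}$ vanishes off $H$ because $x\in L(H)$. So the $h$-sum can be restricted to $H$, which gives exactly the stated formula.

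The main obstacle is justifying the rearrangements of the double series, since we only know $\widehat{u},\widehat{x}\in\ell^2(G)$. I would handle this in two steps. Lemma \ref{lem:mult_is_convolution} first gives $\widehat{x}*\widehat{u}=\widehat{xu}$ with absolutely convergent inner sums. The outer convolution with $\widehat{u^*}$ then pairs two $\ell^2$ functions, so it also converges absolutely by Cauchy--Schwarz.

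Hence I would first write $\widehat{u^*xu}(g)=\sum_s\overline{\widehat{u}(s)}\,\widehat{xu}(s^{-1}g)$, which is absolutely convergent. I would then expand $\widehat{xu}(s^{-1}g)=\sum_{h\in H}\widehat{x}(h)\,\widehat{u}(h^{-1}s^{-1}g)$ and finally reindex $s\mapsto s^{-1}$. I need to watch the index convention at this last step.

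If the inverse placement does not come out literally as $h^{-1}sg$, I would instead start from $\widehat{y}(g)=\tau(\lambda_g^*y)$. I would insert the $L^2$ expansions from Lemma \ref{lem:fourier_coeff}(ii), namely $\eta(u^*)=\sum_s\overline{\widehat{u}(s)}\,\eta(\lambda_{s^{-1}})$, and compute directly using $\tau(\lambda_p)=\delta_{p,e}$. Matching the variables to the statement's convention then gives the formula.
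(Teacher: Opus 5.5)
Your proposal is correct and follows essentially the same route as the paper: write $\widehat{u^*xu}=\widehat{u^*}*(\widehat{x}*\widehat{u})$, use $\widehat{u^*}(k)=\overline{\widehat{u}(k^{-1})}$ and the fact that $\widehat{x}$ is supported on $H$, then substitute $s=k^{-1}$. Your remarks on absolute convergence of the iterated sums go beyond what the paper writes. There is one small index slip in your convergence paragraph: the intermediate expression should be $\sum_s\overline{\widehat{u}(s)}\,\widehat{xu}(sg)$, not $\widehat{xu}(s^{-1}g)$, and then $\widehat{xu}(sg)=\sum_{h\in H}\widehat{x}(h)\widehat{u}(h^{-1}sg)$ gives the formula directly with no final reindexing.
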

\begin{proof}
By Lemma \ref{lem:fourier_coeff}-(ii) the $k$-th Fourier coefficient of $\eta(u^{*})$ is  $\overline{\widehat{u}(k^{-1})}.$
As $x \in L(H)$, the Fourier expansion of $\eta(x)$ is supported entirely on $H$, i.e.,
$$\widehat{x}(m) = 0 \quad \text{for all } m \notin H.$$
Thus for any $t\in G$, we have 
$$(\widehat{x} * \widehat{u})(t) = \sum_{m \in G} \widehat{x}(m) \widehat{u}(m^{-1}t)= \sum_{h \in H} \widehat{x}(h) \widehat{u}(h^{-1}t) .$$
Hence, for any $g\in G$ by Lemma \ref{lem:mult_is_convolution}, we get
$$\widehat{u^*xu}(g) = (\widehat{u^*} * (\widehat{x} * \widehat{u}))(g) = \sum_{k \in G} \widehat{u^*}(k) (\widehat{x} * \hat{u})(k^{-1}g)= \sum_{k \in G} \overline{\widehat{u}(k^{-1})} \sum_{h \in H} \widehat{x}(h) \widehat{u}(h^{-1}k^{-1}g).$$
Let $s = k^{-1}$. Then, we get
$$\widehat{u^*xu}(g) = \sum_{s \in G} \sum_{h \in H} \overline{\widehat{u}(s)} \widehat{x}(h) \widehat{u}(h^{-1}sg).$$
\end{proof}

The remainder of this section is devoted to computing the distance between the masa $L(\langle a\rangle)$ and certain of its unitary conjugates inside the factor $L(\mF_2)$. We begin with some preliminary lemmas.

\begin{lem}\label{lem:non-zero-fourier-coef}
    Let $G$ be a countable discrete group. Then for any $x \in L(G)$ we have
    \[
    \| \eta(x) - \tau(x) \eta(\lambda_e) \|_\tau^2 = \sum_{g \ne e} |\widehat{x}(g)|^2.
    \]
\end{lem}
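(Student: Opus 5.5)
The plan is to reduce the identity to Parseval's identity in the orthonormal basis $\{\eta(\lambda_g):g\in G\}$ of $L^2(L(G),\tau)$ provided by Lemma \ref{lem:fourier_coeff}-(i). Set $y := x-\tau(x)\lambda_e\in L(G)$. Since $\eta$ is linear and $\eta(\lambda_e)=\eta(1)$, we have $\eta(y)=\eta(x)-\tau(x)\eta(\lambda_e)$, so the left-hand side equals $\|\eta(y)\|_\tau^2$.

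Next, we compute the Fourier coefficients of $y$. For $g\in G$,
\[
\widehat{y}(g)=\tau(\lambda_g^*x)-\tau(x)\tau(\lambda_g^*)=\widehat{x}(g)-\tau(x)\,\delta_{g,e},
\]
because $\tau(\lambda_g^*)=\tau(\lambda_{g^{-1}})$ equals $1$ if $g=e$ and $0$ otherwise. Since $\widehat{x}(e)=\tau(\lambda_e^* x)=\tau(x)$, it follows that $\widehat{y}(e)=0$ and $\widehat{y}(g)=\widehat{x}(g)$ for $g\neq e$.

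Finally, Lemma \ref{lem:fourier_coeff}-(ii) gives $\eta(y)=\sum_g\widehat{y}(g)\eta(\lambda_g)$ with convergence in $L^2$. Parseval's identity for this orthonormal basis then yields $\|\eta(y)\|_\tau^2=\sum_g|\widehat{y}(g)|^2=\sum_{g\neq e}|\widehat{x}(g)|^2$; this is the same computation used in the proof of Lemma \ref{lem:mult_is_convolution}. There is no real obstacle; the only point needing care is to identify $\widehat{x}(e)$ with $\tau(x)$ so that the $e$-th term cancels exactly.
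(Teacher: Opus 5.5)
Your proof is correct and is essentially the paper's argument: both rest on Parseval's identity in the orthonormal basis $\{\eta(\lambda_g):g\in G\}$ together with $\widehat{x}(e)=\tau(x)$. The only cosmetic difference is that the paper first expands $\tau((x-\tau(x))^*(x-\tau(x)))=\|\eta(x)\|_\tau^2-|\tau(x)|^2$ and then applies Parseval to $x$, whereas you apply Parseval directly to $y=x-\tau(x)\lambda_e$ after computing its Fourier coefficients.
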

\begin{proof}
    Indeed,
    \[
    \| \eta(x) - \tau(x) \eta(\lambda_e) \|_\tau^2 = \|\eta(x - \tau(x) \lambda_e)\|_\tau^2 = \tau((x-\tau(x))^*(x-\tau(x))) = \|\eta(x)\|_\tau^2 - |\tau(x)|^2 = \sum_{g\ne e} |\widehat{x}(g)|^2.
    \]
\end{proof}

\begin{lem}\label{lem:norm_value}
   Let $\mF_2= \langle a, b \rangle$ be the free group on two generators and the subgroup $H= \langle a \rangle$. Consider the inclusion of finite von Neumann algebras $L(H) \subseteq L(\mF_2)$ with the unique $\tau$-preserving conditional expectation $E_H$ from $ L(\mF_2)$ onto $L(H)$. Suppose $u\in L(\mF_2)$ be a unitary and $F:=\operatorname{Ad_{u}}\circ E_H \circ \operatorname{Ad_{u^*}}$ is the unique $\tau$-preserving conditional expectation from $L(\mF_2)$ onto $uL(H)u^*$. Then for any $x\in \ball{L(H)}$, we have 
   \[
   \left\Vert{} \eta(x)- \eta(F(x)) \right\Vert{}_{\tau}  
   = \sqrt{\|\eta(x)\|_{\tau}^2 - \|\eta(F(x))\|_{\tau}^2} =
   \sqrt{ \sum_{h \in H} |\widehat{x}(h)|^2  - \sum_{g \in H} |\widehat{u^*xu}(g)|^2 }.
   \]
\end{lem}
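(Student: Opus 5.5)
The first equality is Pythagoras for the orthogonal projection $e_{uL(H)u^*}$; the second comes from writing $\|\eta(F(x))\|_\tau$ in Fourier coefficients after undoing the conjugation by $u$.

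\textbf{First equality.} Since $F$ is the $\tau$-preserving conditional expectation onto $uL(H)u^*$, equation \eqref{eqn:jones-proj} gives $\eta(F(x))=e\,\eta(x)$, where $e$ is the Jones projection onto $L^2(uL(H)u^*)$. So $\eta(x)-\eta(F(x))$ is orthogonal to $\eta(F(x))$, and
\[
\|\eta(x)-\eta(F(x))\|_\tau^2=\|\eta(x)\|_\tau^2-\|\eta(F(x))\|_\tau^2 .
\]
(Alternatively, expand $\tau((x-F(x))^*(x-F(x)))$ and use the bimodule property together with $\tau\circ F=\tau$ to get $\tau(F(x)^*x)=\tau(F(x)^*F(x))$.)

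\textbf{First term.} By Lemma~\ref{lem:fourier_coeff}, $\|\eta(x)\|_\tau^2=\sum_{g}|\widehat{x}(g)|^2$. By Proposition~\ref{prop:fourier-facts}(vi), $\widehat{x}$ is supported on $H$ because $x\in L(H)$. Hence $\|\eta(x)\|_\tau^2=\sum_{h\in H}|\widehat{x}(h)|^2$.

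\textbf{Second term.} Write $F(x)=uE_H(u^*xu)u^*$. The trace is unitarily invariant, so
\[
\|\eta(F(x))\|_\tau^2=\tau\bigl(uE_H(u^*xu)^*E_H(u^*xu)u^*\bigr)=\|\eta(E_H(u^*xu))\|_\tau^2 .
\]
Next, $E_H$ acts on Fourier coefficients by restriction to $H$: $\widehat{E_H(y)}=\widehat{y}\cdot 1_H$. To verify this, note that for $h\in H$ we have $\tau(\lambda_h^*E_H(y))=\tau(E_H(\lambda_h^* y))=\tau(\lambda_h^*y)$. For $g\notin H$ the coefficient vanishes by Proposition~\ref{prop:fourier-facts}(vi). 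Therefore $\|\eta(E_H(u^*xu))\|_\tau^2=\sum_{g\in H}|\widehat{u^*xu}(g)|^2$.

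Substituting both terms into the first equality gives the stated formula. The coefficients $\widehat{u^*xu}(g)$ can then be computed with Lemma~\ref{lem:unitary_conjugate}, though that is not needed here.

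The only step with any content is the identification $\widehat{E_H(y)}=\widehat{y}\,1_H$. The obstacle is justifying that the Jones projection $e_H$ is exactly the orthogonal projection onto $\overline{\operatorname{span}}\{\eta(\lambda_h):h\in H\}$, and this follows from Lemma~\ref{lem:fourier_coeff}(i). Everything else is routine.
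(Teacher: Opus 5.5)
Your proof is correct and follows essentially the same route as the paper: Pythagoras via the Jones projection for the first equality, unitary invariance of $\tau$ to pass from $F(x)$ to $E_H(u^*xu)$, and the bimodule identity $\tau(\lambda_g^*E_H(y))=\tau(\lambda_g^*y)$ for $g\in H$ to read off the Fourier coefficients of $E_H(u^*xu)$. You are slightly more explicit than the paper in noting, via Proposition~\ref{prop:fourier-facts}(vi), that these coefficients vanish off $H$, which the paper uses tacitly; your closing remark about identifying $e_H$ with the projection onto the closed span of the $\eta(\lambda_h)$ is not actually needed once that bimodule computation is in place.
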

\begin{proof}
For any $x\in L(\mathbb{F}_2)$, by the Pythagorean theorem,
\begin{align*}
\Vert{}\eta(x)\Vert{}_{\tau}^2 &= 
\Vert{}e_{uL(H)u^*}(\eta(x))\Vert{}_{\tau}^2 + \Vert{}(1-e_{uL(H)u^*})(\eta(x))\Vert{}_{\tau}^2\\
&=\Vert{}\eta(F(x))\Vert{}_{\tau}^2 + \Vert{}\eta(x)-\eta(F(x))\Vert{}_{\tau}^2 \qquad \text{(By eqn. \ref{eqn:jones-proj})}
\end{align*}
Consequently,
$
\Vert{}\eta(x)-\eta(F(x))\Vert{}_{\tau}^2= \Vert{}\eta(x)\Vert{}_{\tau}^2- \Vert{}\eta(F(x))\Vert{}_{\tau}^2.
$
As $u$ is a unitary, conjugation by $u$ is an isometry in the $\tau$-norm. Therefore, we have 
$$\Vert{}\eta(F(x))\Vert{}_{\tau} = \Vert{}\eta(u E_{H}(u^* x u) u^*)\Vert{}_{\tau} = \Vert{}\eta(E_{H}(u^* x u))\Vert{}_{\tau}.$$
We know that, $ \Vert{}\eta(x)\Vert{}_{\tau}^2= \sum_{h \in H} |\widehat{x}(h)|^2$,
and for any, $g\in H$,  we get  
\[
\widehat{E_{H}(u^* x u)}(g)= \tau\left(\lambda_{g}^*E_{H}(u^* x u)\right)= \tau\left(E_{H}(\lambda_{g}^*u^*xu)\right)= \tau(\lambda_{g}^*u^*xu)= \widehat{u^*xu}(g).
\]
Hence, 
\[
\Vert{}\eta(E_{H}(u^* x u))\Vert{}_{\tau}^2=
\sum_{g\in H}|\widehat{u^*xu}(g)|^2 .
\]
Therefore, we get 
\[
 \left\Vert{} \eta(x)- \eta(F(x)) \right\Vert{}_{\tau} ^2= \sum_{h \in H} |\widehat{x}(h)|^2 - \sum_{g\in H}|\widehat{u^*xu}(g)|^2. 
\]
This completes the proof!
\end{proof}

\begin{lem}\label{lem:sup_one_side}
    Let $u \in L(\langle b \rangle)$ be a unitary. Then 
    \[
    \sup_{x\in \ball{L(H)}} \left\Vert{} \eta(x)-\eta(F(x))\right\Vert{}_{\tau}= \sqrt{1 - \vert{}\tau(u)\vert{}^4}.
    \]
\end{lem}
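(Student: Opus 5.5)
The plan is to reduce everything to an explicit formula for $E_H(u^*xu)$ when $x\in L(H)$, $H=\langle a\rangle$, and $u\in L(\langle b\rangle)$, and then plug it into Lemma \ref{lem:norm_value}. Write $\widehat{u}(b^m)=:\beta_m$, so that $\widehat{u}$ is supported on $\langle b\rangle$ by Proposition \ref{prop:fourier-facts}-(vi), $\beta_0=\tau(u)$, and $\sum_m|\beta_m|^2=\|\eta(u)\|_\tau^2=1$. Write $\widehat{x}(a^k)=:c_k$, so $c_0=\tau(x)$.

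\textbf{Step 1: the Fourier coefficients of $u^*xu$ on $H$.} By Lemma \ref{lem:unitary_conjugate}, the coefficient of $u^*xu$ at $g$ is a sum of terms $\overline{\beta_m}\,c_k\,\beta_n$ over triples $(m,k,n)$ with $b^{-m}a^kb^n=g$. Now restrict to $g=a^j\in H$ and use uniqueness of reduced words in $\mF_2$:
\begin{itemize}
\item if $k\neq 0$, then $b^{-m}a^kb^n\in\langle a\rangle$ forces $m=n=0$ and $j=k$;
\item if $k=0$, the word $b^{n-m}$ lies in $\langle a\rangle$ only when $n=m$, which gives $j=0$.
\end{itemize}
Hence
\[
\widehat{u^*xu}(e)=c_0\sum_m|\beta_m|^2=\tau(x),\qquad \widehat{u^*xu}(a^k)=|\tau(u)|^2c_k\quad(k\neq0).
\]
In words, $E_H(u^*xu)=\tau(x)1+|\tau(u)|^2\,(x-\tau(x)1)$.

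\textbf{Step 2: the norm identity.} Substituting Step 1 into Lemma \ref{lem:norm_value} (or combining with Lemma \ref{lem:non-zero-fourier-coef}) gives
\[
\|\eta(x)-\eta(F(x))\|_\tau^2=\bigl(1-|\tau(u)|^4\bigr)\sum_{k\neq0}|c_k|^2=\bigl(1-|\tau(u)|^4\bigr)\bigl(\|\eta(x)\|_\tau^2-|\tau(x)|^2\bigr).
\]

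\textbf{Step 3: the supremum.} For $x\in\ball{L(H)}$ we have $\|\eta(x)\|_\tau\le\|x\|\le 1$, so the right-hand side of Step 2 is at most $1-|\tau(u)|^4$. Equality is attained at $x=\lambda_a$, which is a Haar unitary: it has $\|\eta(\lambda_a)\|_\tau=1$ and $\tau(\lambda_a)=0$. Taking square roots gives the claimed value $\sqrt{1-|\tau(u)|^4}$.

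\textbf{Main obstacle.} The delicate point is Step 1. The triple sum there is only $\ell^2$-convergent, and we must be sure that no two distinct triples contribute to the same group element in $H$. Reduced-word uniqueness handles the combinatorics. For convergence, I would first carry out the computation for $x$ and $u$ with finitely supported Fourier coefficients, where everything is a finite sum. I would then pass to general $x$ and $u$ using $\|\cdot\|_\tau$-continuity of multiplication on bounded sets together with continuity of $E_H$. Alternatively, one can rely directly on the absolute convergence guaranteed by Lemma \ref{lem:mult_is_convolution}.
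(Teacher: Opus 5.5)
Your proposal is correct and follows essentially the same route as the paper. The paper also expands $\widehat{u^*xu}$ on $\langle a\rangle$ via Lemma \ref{lem:unitary_conjugate}, uses the same reduced-word case analysis to get $\widehat{u^*xu}(e)=\widehat{x}(e)$ and $\widehat{u^*xu}(a^m)=|\tau(u)|^2\widehat{x}(a^m)$ for $m\neq 0$, combines this with Lemma \ref{lem:norm_value} and Lemma \ref{lem:non-zero-fourier-coef}, and attains the supremum at a trace-zero unitary $\lambda_h$. Your convergence remark is extra care: the paper simply works termwise with the iterated sum of Lemma \ref{lem:unitary_conjugate}, whose convergence comes from Lemma \ref{lem:mult_is_convolution}.
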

\begin{proof}
Let $x\in \ball{L(H)}$. By Lemma \ref{lem:norm_value}, we have 
\[
\left\Vert{} \eta(x)- \eta(F(x)) \right\Vert{}_{\tau} ^2= 
    \sum_{h \in H} |\widehat{x}(h)|^2  - \sum_{g \in H} |\widehat{u^*xu}(g)|^2
\]
Since $H=\gen{a}$, by Lemma \ref{lem:unitary_conjugate} for any $m\in \mathbb{Z}$, we get 
$$\widehat{u^*xu}(a^m) = \sum_{s \in \mathbb{F}_2} \sum_{h \in \langle a \rangle} \overline{\widehat{u}(s)} \widehat{x}(h) \widehat{u}(h^{-1}s a^m).$$
Because $u \in L(\langle b \rangle)$, its Fourier coefficient $\widehat{u}(w)$ is non-zero if and only if $w \in \langle b \rangle$. Therefore, the first term $\overline{\widehat{u}(s)}$ forces $s = b^j$ for some $j \in \mathbb{Z}$. Thus we get,
$$\widehat{uxu^*}(a^m) = \sum_{j \in \mathbb{Z}} \sum_{n \in \mathbb{Z}} \overline{\widehat{u}(b^j)} \widehat{x}(a^n) \widehat{u}(a^{-n} b^j a^m)$$
For the final term $\widehat{u}(a^{-n} b^j a^m)$ to be non-zero, the argument $a^{-n} b^j a^m$ must belong to $\langle b \rangle$. Thus we have the following two cases:\\
\noindent \textbf{Case (i).} When $m = 0$, the argument becomes $a^{-n} b^j$.
For $a^{-n} b^j$ to belong to $\langle b \rangle$, all $a$ generators must vanish, which means we must have $n = 0$.
Thus, we get
$$\widehat{u^*xu}(e) = \sum_{j \in \mathbb{Z}} \overline{\widehat{u}(b^j)} \widehat{x}(e) \widehat{u}(b^j)=  \widehat{x}(e) \sum_{j \in \mathbb{Z}} \vert{}\widehat{u}(b^j)\vert{}^2= \widehat{x}(e) \Vert{}\eta(u)\Vert{}_{\tau}^2= \widehat{x}(e).$$
\noindent \textbf{Case (ii).} When $m \neq 0$, we need the word $a^{-n} b^j a^m$ to belong to $\langle b \rangle$. We analyze $j$: 
If $j \neq 0$: The $b^j$ acts as a block preventing any cancellation between $a^{-n}$ and $a^m$. For this word to be in $\langle b \rangle$, we would need both $n = 0$ and $m = 0$. Since we assumed $m \neq 0$, this is impossible. Thus, all terms with $j \neq 0$ are exactly zero. If $j = 0$: The argument simplifies to $a^{-n} a^m = a^{m-n}$. For this to be in $\langle b \rangle$, it must equal the identity $e$, which implies $m - n = 0$, or $n = m$. Therefore, out of the entire double sum, only a single term survives: the term where $j = 0$ and $n = m$. Then, we get 
$$\widehat{u^*xu}(a^m) = \overline{\widehat{u}(b^0)} \widehat{x}(a^m) \widehat{u}(a^{-m} b^0 a^m)= \overline{\widehat{u}(e)} \widehat{x}(a^m) \widehat{u}(e).$$
Since $\widehat{u}(e) = \tau(u)$, this simplifies to,
$$\widehat{u^*xu}(a^m) = \vert{}\tau(u)\vert{}^2 \widehat{x}(a^m).$$
Then, we have $$\Vert{}\eta(F(x))\Vert{}_{\tau}^2 = \sum_{g \in H} \vert{}\widehat{u^*xu}(g)\vert{}^2 = \vert{}\widehat{x}(e)\vert{}^2 + \vert{}\tau(u)\vert{}^4 \sum_{m \neq 0} \vert{}\widehat{x}(a^m)\vert{}^2.$$
Subtracting this from $\Vert{}\eta(x)\Vert{}_{\tau}^2 = \vert{}\widehat{x}(e)\vert{}^2 + \sum_{m \neq 0} \vert{}\widehat{x}(a^m)\vert{}^2$ yields
$$\Vert{}\eta(x)\Vert{}_{\tau}^2 - \Vert{}\eta(F(x))\Vert{}_{\tau}^2 = (1 - \vert{}\tau(u)\vert{}^4) \sum_{m \neq 0} \vert{}\widehat{x}(a^m)\vert{}^2.$$
Notice that the sum on the right side is exactly $\Vert{}\eta(x - \tau(x)\lambda_e)\Vert{}_{\tau}^2$ (by Lemma \ref{lem:non-zero-fourier-coef}). Thus
$$\left\Vert \eta(x-F(x)) \right\Vert_{\tau} = \sqrt{1 - \vert{}\tau(u)\vert{}^4} \; \Vert{}\eta(x - \tau(x)\lambda_e)\Vert{}_{\tau}.$$

Because $\sqrt{1 - \vert{}\tau(u)\vert{}^4}$ is fixed for a given unitary $u$. Thus to find the supremum it is enough to maximize the term, $\Vert{}\eta(x - \tau(x)\lambda_e)\Vert{}_{\tau}
$
for all $x\in \ball{L(H)}$.
Therefore for any $x\in \ball{L(H)}$, we get
$$\Vert{}\eta(x - \tau(x)\lambda_e)\Vert{}_{\tau}^2 = \Vert{}\eta(x)\Vert{}_{\tau}^2 - \vert{}\tau(x)\vert{}^2 \le \Vert{}\eta(x)\Vert{}_{\tau}^2 \le \op{x}^2 \le 1.$$
Also, for any unitary $v$ in $L(H)$ with zero trace (for example take $v = \lambda_h$ for $h \ne e$), we have 
$$
\Vert{}\eta(v - \tau(v)\lambda_e)\Vert{}_{\tau}^2=1.
$$
Thus, we get
$$\sup_{x \in \ball{L(H)}}\left\Vert \eta(x-F(x)) \right\Vert_{\tau} = \sqrt{1 - \vert{}\tau(u)\vert{}^4} \left(\sup_{x \in \ball{L(H)}}\Vert{}\eta(x - \tau(x)\lambda_e)\Vert{}_{\tau}\right)= \sqrt{1 - \vert{}\tau(u)\vert{}^4}.$$
\end{proof}

\begin{lem}\label{lem:sup_other_side}
 Let  $u \in L(\langle b \rangle)$ be a unitary. Then 
$$\sup_{y\in \ball{u L(H)u^*}}\|\eta(y)-\eta(E_{H}(y))\|_{\tau}= \sqrt{1 - \vert{}\tau(u)\vert{}^4}$$   
\end{lem}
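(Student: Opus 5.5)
The plan is to reduce this to Lemma \ref{lem:sup_one_side} by conjugating with $u$. Let $y\in\ball{uL(H)u^*}$ and write $y=uxu^*$ with $x\in\ball{L(H)}$; conjugation by a unitary preserves the operator norm, so $x\mapsto uxu^*$ is a bijection of the unit balls. The quantity to bound is then $\|\eta(uxu^*)-\eta(E_H(uxu^*))\|_\tau$. Conjugation by $u^*$ is a $\|\cdot\|_\tau$-isometry, so this equals
\[
\|\eta(x)-\eta(u^*E_H(uxu^*)u)\|_\tau .
\]
The map $u^*E_H(u\,\cdot\,u^*)u=\operatorname{Ad}_{u^*}\circ E_H\circ\operatorname{Ad}_u$ is the $\tau$-preserving conditional expectation onto $u^*L(H)u$. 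Hence the left-hand side of the statement equals $\sup_{x\in\ball{L(H)}}\|\eta(x)-\eta(F'(x))\|_\tau$, where $F'$ is the expectation $F$ of Lemma \ref{lem:norm_value} with $u$ replaced by $u^*$.

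Next I check that Lemma \ref{lem:sup_one_side} applies to $u^*$. We have $u^*\in L(\langle b\rangle)$, it is a unitary, and $|\tau(u^*)|=|\overline{\tau(u)}|=|\tau(u)|$. Lemma \ref{lem:norm_value} holds for an arbitrary unitary, and the proof of Lemma \ref{lem:sup_one_side} uses only that $\widehat{u}$ is supported on $\langle b\rangle$ and that $\widehat u(e)=\tau(u)$. Applying it to $u^*$ therefore gives the value $\sqrt{1-|\tau(u^*)|^4}=\sqrt{1-|\tau(u)|^4}$.

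The one step needing care is a mismatch of conventions. Lemma \ref{lem:norm_value} defines $F=\operatorname{Ad}_u\circ E_H\circ\operatorname{Ad}_{u^*}$, but its Fourier computation is phrased via $u^*xu$. I would recheck that, with $u$ replaced by $u^*$, the expression $\|\eta(F'(x))\|_\tau^2=\|\eta(E_H(uxu^*))\|_\tau^2$ is computed by the same Case (i)/(ii) analysis. That analysis is symmetric under $u\leftrightarrow u^*$, since both unitaries have Fourier support in $\langle b\rangle$ and the freeness argument on $a^{-n}b^ja^m$ is unchanged. It yields $\widehat{uxu^*}(e)=\widehat{x}(e)$ and $\widehat{uxu^*}(a^m)=|\tau(u)|^2\widehat{x}(a^m)$ for $m\ne0$.

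For attainment I would take $x=\lambda_a$, which is a trace-zero unitary in $L(H)$. Then $y=u\lambda_a u^*$ is a trace-zero unitary in $\ball{uL(H)u^*}$ and realizes the supremum $\sqrt{1-|\tau(u)|^4}$.
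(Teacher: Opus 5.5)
Your proposal is correct and is essentially the paper's argument: the paper directly redoes the Fourier computation of Lemma~\ref{lem:sup_one_side} for $uxu^*$, obtaining $\widehat{uxu^*}(e)=\widehat{x}(e)$ and $\widehat{uxu^*}(a^m)=|\tau(u)|^2\widehat{x}(a^m)$ for $m\neq 0$. You instead apply Lemma~\ref{lem:sup_one_side} to $u^*\in L(\langle b\rangle)$, using that $\operatorname{Ad}_{u^*}$ is a $\|\cdot\|_\tau$-isometry and that $|\tau(u^*)|=|\tau(u)|$, which avoids repeating the computation. The convention ``mismatch'' you flag is not a real issue: $F(x)=uE_H(u^*xu)u^*$, so $\|\eta(F(x))\|_\tau=\|\eta(E_H(u^*xu))\|_\tau$ is exactly the quantity that Lemma~\ref{lem:norm_value} computes.
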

\begin{proof}
    Suppose $y = uxu^*$ for some $x \in \ball{L(H)}$. By eqn. \ref{eqn:jones-proj} and the Pythgorean theorem, we get
 \[   \Vert{}\eta(uxu^*)-\eta(E_H(uxu^*))\Vert{}_{\tau}^2= \Vert{}\eta(uxu^*)\Vert{}_{\tau}^2- \Vert{}\eta(E_H(uxu^*))\Vert{}_{\tau}^2=\Vert{}\eta(x)\Vert{}_{\tau}^2- \Vert{}\eta(E_H(uxu^*))\Vert{}_{\tau}^2.\] 
 For any $g\in H$, we get
 $$\widehat{E_{H}(uxu^*)}(g)= \tau\left(\lambda_{g}^*(E_H(uxu^*))\right)= \tau\left(E_H(\lambda_{g}^*uxu^*)\right)=\tau(\lambda_{g}^*uxu^*)= \widehat{uxu^*}(g).
 $$
By Lemma \ref{lem:unitary_conjugate}, we get 
\[
\widehat{uxu^*}(a^m)= \sum_{k\in \mF_2}\sum_{h\in H}\widehat{u}(k)\widehat{x}(h)\overline{\widehat{u}(a^{-m}kh)}
\]
Because $u \in L(\langle b \rangle)$, its Fourier coefficient $\widehat{u}(w)$ is non-zero if and only if $w \in \langle b \rangle$. Therefore, the first term $\overline{\widehat{u}(s)}$ forces $k = b^j$ for some $j \in \mathbb{Z}$. Thus, we get
$$\widehat{uxu^*}(a^m) = \sum_{j \in \mathbb{Z}} \sum_{n \in \mathbb{Z}} \widehat{u}(b^j) \widehat{x}(a^n) \overline{\widehat{u}(a^{-m} b^j a^n)}$$
For the final term $\widehat{u}(a^{-m} b^j a^n)$ to be non-zero, the argument $a^{-m} b^j a^n$ must belong to $\langle b \rangle$. Thus we have the following two cases:\\
\noindent \textbf{Case (i).} When $m = 0$, the argument becomes $b^ja^n$.
For $b^ja^n$ to belong to $\langle b \rangle$, all $a$ generators must vanish, which means we must have $n = 0$.
Thus, we get
$$\widehat{uxu^*}(e) = \sum_{j \in \mathbb{Z}} \widehat{u}(b^j) \widehat{x}(e) \overline{\widehat{u}(b^j)}=  \widehat{x}(e) \sum_{j \in \mathbb{Z}} \vert{}\widehat{u}(b^j)\vert{}^2= \widehat{x}(e) \Vert{}\eta(u)\Vert{}_{\tau}^2= \widehat{x}(e).$$
\noindent \textbf{Case (ii).} When $m \neq 0$, we need the word $a^{-m} b^j a^n$ to belong to $\langle b \rangle$. We analyze $j$: 
If $j \neq 0$: The $b^j$ acts as a block preventing any cancellation between $a^{-m}$ and $a^n$. For this word to be in $\langle b \rangle$, we would need both $n = 0$ and $m = 0$. Since we assumed $m \neq 0$, this is impossible. Thus, all terms with $j \neq 0$ are exactly zero. If $j = 0$: The argument simplifies to $a^{-m} a^n = a^{n-m}$. For this to be in $\langle b \rangle$, it must equal the identity $e$, which implies $n - m = 0$, or $n = m$. Therefore, out of the entire double sum, only a single term survives: the term where $j = 0$ and $n = m$. Then, we get 
$$\widehat{uxu^*}(a^m) = \widehat{u}(b^0) \widehat{x}(a^m) \overline{\widehat{u}(a^{-m} b^0 a^n)}= \widehat{u}(e) \widehat{x}(a^m) \overline{\widehat{u}(e)}.$$
Since $\widehat{u}(e) = \tau(u)$, this simplifies to,
$$\widehat{uxu^*}(a^m) = \vert{}\tau(u)\vert{}^2 \widehat{x}(a^m).$$
Then, we have $$\Vert{}\eta(E_{H}(uxu^*))\Vert{}_{\tau}^2 = \sum_{g \in H} \vert{}\widehat{uxu^*}(g)\vert{}^2 = \vert{}\widehat{x}(e)\vert{}^2 + \vert{}\tau(u)\vert{}^4 \sum_{m \neq 0} \vert{}\widehat{x}(a^m)\vert{}^2.$$
Subtracting this from $\Vert{}\eta(x)\Vert{}_{\tau}^2 = \vert{}\widehat{x}(e)\vert{}^2 + \sum_{m \neq 0} \vert{}\widehat{x}(a^m)\vert{}^2$ yields
$$\Vert{}\eta(x)\Vert{}_{\tau}^2 - \Vert{}\eta(E_{H}(uxu^*))\Vert{}_{\tau}^2 = (1 - \vert{}\tau(u)\vert{}^4) \sum_{m \neq 0} \vert{}\widehat{x}(a^m)\vert{}^2.$$
Notice that the sum on the right side is exactly $\Vert{}\eta(x - \tau(x)\lambda_e)\Vert{}_{\tau}^2$ (by Lemma \ref{lem:non-zero-fourier-coef}). Thus
$$\left\Vert \eta(uxu^*-E_{H}(uxu^*)) \right\Vert_{\tau} = \sqrt{1 - \vert{}\tau(u)\vert{}^4} \; \Vert{}\eta(x - \tau(x)\lambda_e)\Vert{}_{\tau}.$$

Because $\sqrt{1 - \vert{}\tau(u)\vert{}^4}$ is fixed for a given unitary $u$. Thus to find the supremum it is enough to maximize the term, $\Vert{}\eta(x - \tau(x)\lambda_e)\Vert{}_{\tau}
$
for all $x\in \ball{L(H)}$.
Therefore for any $x\in \ball{L(H)}$, we get
$$\Vert{}\eta(x - \tau(x)\lambda_e)\Vert{}_{\tau}^2 = \Vert{}\eta(x)\Vert{}_{\tau}^2 - \vert{}\tau(x)\vert{}^2 \le \Vert{}\eta(x)\Vert{}_{\tau}^2 \le \op{x}^2 \le 1.$$
Also, for any unitary $v$ in $L(H)$ with zero trace (for example take $v = \lambda_h$ for $h \ne e$), we have 
$$
\Vert{}\eta(v - \tau(v)\lambda_e)\Vert{}_{\tau}^2=1.
$$
Thus, we get
$$\sup_{x \in \ball{L(H)}}\left\Vert \eta(uxu^*-E_{H}(uxu^*)) \right\Vert_{\tau} = \sqrt{1 - \vert{}\tau(u)\vert{}^4} \left(\sup_{x \in \ball{L(H)}}\Vert{}\eta(x - \tau(x)\lambda_e)\Vert{}_{\tau}\right)= \sqrt{1 - \vert{}\tau(u)\vert{}^4}.$$
\end{proof}

\begin{theorem}\label{thm:mt_is_trace}
Let $\mF_{2}= \langle a, b \rangle$ and consider a unitary $u \in L(\langle b \rangle)$. Then 
    \[
    \dmt (L(\langle a \rangle), u L(\langle a \rangle)u^*) = \sqrt{1- |\tau(u)|^4}.
    \]    
\end{theorem}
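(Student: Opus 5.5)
The plan is to combine the two one-sided suprema already computed with the reformulation of $\dmt$ in Theorem \ref{thm:mashood_taylor}. Put $H=\gen{a}$, $\sP=L(H)$ and $\sQ=uL(H)u^*$. Both are von Neumann subalgebras of $L(\mF_2)$, since conjugation by a unitary is a normal $*$-automorphism. Theorem \ref{thm:mashood_taylor} then gives
\[
\dmt(\sP,\sQ)=\max\left\{\sup_{x\in\ball{\sP}}\|\eta(x)-\eta(E_{\sQ}(x))\|_{\tau},\ \sup_{y\in\ball{\sQ}}\|\eta(y)-\eta(E_{\sP}(y))\|_{\tau}\right\}.
\]

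First, I identify the conditional expectations. The map $F=\operatorname{Ad}_u\circ E_H\circ\operatorname{Ad}_{u^*}$ is idempotent, positive and $\sQ$-bimodular, and it preserves $\tau$. It is therefore the unique $\tau$-preserving conditional expectation onto $\sQ$, so $E_{\sQ}=F$, as already noted in Lemma \ref{lem:norm_value}. Also $E_{\sP}=E_H$.

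Next, Lemma \ref{lem:sup_one_side} computes the first supremum as $\sqrt{1-|\tau(u)|^4}$. For the second supremum, every element of $\ball{\sQ}$ has the form $uxu^*$ with $x\in\ball{L(H)}$, because $\|uxu^*\|=\|x\|$. Lemma \ref{lem:sup_other_side} then shows that this supremum is also $\sqrt{1-|\tau(u)|^4}$. Taking the maximum of two equal quantities gives the formula in the statement.

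There is essentially no obstacle at this level, since all the computational work was done in the two preceding lemmas. The one point that needs checking is the parametrization $\ball{uL(H)u^*}=u\,\ball{L(H)}\,u^*$, which Lemma \ref{lem:sup_other_side} uses implicitly. It follows because $\operatorname{Ad}_u$ is an isometric bijection of $L(\mF_2)$.
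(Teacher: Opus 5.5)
Your proposal is correct and matches the paper's proof: the paper likewise obtains the theorem by taking the maximum of the two one-sided suprema from Lemma \ref{lem:sup_one_side} and Lemma \ref{lem:sup_other_side} via Theorem \ref{thm:mashood_taylor}. Your extra checks, that $\operatorname{Ad}_u\circ E_H\circ\operatorname{Ad}_{u^*}$ is the trace-preserving expectation onto $uL(H)u^*$ and that $\ball{uL(H)u^*}=u\,\ball{L(H)}\,u^*$, just make explicit what those lemmas use implicitly.
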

\begin{proof}
    The proof follows from Lemma \ref{lem:sup_one_side}, Lemma \ref{lem:sup_other_side} and Theorem \ref{thm:mashood_taylor}.
\end{proof}

\begin{cor}
    Let $\mF_{2}= \langle a, b \rangle$ and consider a unitary $u \in L(\langle b \rangle)$ such that $\tau(u) \ne 0$. Then $L(\langle a \rangle)$ is not orthogonal to  $u L(\langle a \rangle)u^*$ in the sense of Popa.
\end{cor}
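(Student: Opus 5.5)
To show non-orthogonality, I will exhibit explicit trace-zero elements $x\in L(\langle a\rangle)$ and $y\in uL(\langle a\rangle)u^*$ with $\tau(xy)\neq 0$. The natural candidates are $x=\lambda_{a}^*=\lambda_{a^{-1}}$ and $y=u\lambda_a u^*$. Clearly $\tau(x)=0$, and $\tau(y)=\tau(\lambda_a)=0$ since $\tau$ is a trace. So it remains to compute $\tau(\lambda_{a^{-1}}u\lambda_a u^*)$.

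By definition of the Fourier coefficient, this quantity equals $\widehat{u\lambda_a u^*}(a)$. I will compute it directly, exactly as in Case (ii) of the proof of Lemma \ref{lem:sup_other_side}, taking $x=\lambda_a$ (so $\widehat{x}=\delta_a$ by Proposition \ref{prop:fourier-facts}-(iii)) and $m=1$. That computation uses only that $\widehat{u}$ is supported on $\langle b\rangle$. It shows that in the double sum only the term $j=0$, $n=m$ survives, giving
\[
\widehat{u\lambda_a u^*}(a)=|\tau(u)|^2\,\widehat{\lambda_a}(a)=|\tau(u)|^2 .
\]
Alternatively, the same conclusion follows by expanding $u=\sum_j \widehat{u}(b^j)\lambda_{b^j}$ in $L^2$ and applying the trace. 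The term $\tau(\lambda_{a^{-1}}\lambda_{b^j}\lambda_a\lambda_{b^{-k}})$ is nonzero only when $a^{-1}b^jab^{-k}=e$. By freeness this forces $j=k=0$.

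Hence $\tau(xy)=|\tau(u)|^2\neq 0$ under the hypothesis $\tau(u)\ne0$, while $\tau(x)=\tau(y)=0$, which violates the definition of orthogonality. The only point needing care is justifying the $L^2$ expansion, that is, that the trace of the product can be computed termwise. This is handled by Lemma \ref{lem:mult_is_convolution}, since the convolution sums converge absolutely, or equivalently by the computation already carried out in Lemma \ref{lem:sup_other_side}. I therefore expect no real obstacle.
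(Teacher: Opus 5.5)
Your proposal is correct, but it takes a different route from the paper. The paper argues by contrapositive. By Theorem~\ref{thm:mt_is_trace}, the hypothesis $\tau(u)\neq 0$ gives $\dmt(L(\langle a\rangle),uL(\langle a\rangle)u^*)=\sqrt{1-|\tau(u)|^4}<1$. Theorem~\ref{thm:diffuse-comm-sq} says that two diffuse orthogonal subalgebras are at Mashood--Taylor distance $1$, and both algebras here are diffuse copies of $L(\mathbb{Z})$, so they cannot be orthogonal.

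You instead exhibit an explicit witness: $x=\lambda_{a^{-1}}$ and $y=u\lambda_a u^*$, with $\tau(x)=\tau(y)=0$ and $\tau(xy)=\widehat{u\lambda_a u^*}(a)=|\tau(u)|^2$. This computation is right, since freeness kills every term except $j=k=0$. The termwise evaluation needs no extra care: $\tau(xy)$ is simply the inner product of the two $\ell^2$-vectors $\lambda_a u^*\delta_e$ and $u^*\delta_a$.

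The paper's route is a two-line deduction, but it rests on the full distance formula, on Popa's characterization of orthogonality via $E_{\sQ}E_{\sP}=\tau(\cdot)1$, and on Haar unitaries in diffuse algebras. Yours is self-contained, uses only freeness and a single Fourier coefficient, and measures exactly how orthogonality fails.

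It also gives more. The same Fourier computation as in Lemma~\ref{lem:sup_other_side} shows that $E_{L(\langle a\rangle)}(uxu^*)=\tau(x)1+|\tau(u)|^2\,(x-\tau(x)1)$ for $x\in L(\langle a\rangle)$. Hence $L(\langle a\rangle)$ and $uL(\langle a\rangle)u^*$ are orthogonal if and only if $\tau(u)=0$. The distance-based argument can only ever give one direction, because $\dmt=1$ does not force orthogonality (Example~\ref{exam:ortho-converse-not-true}).
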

\begin{proof}
    This follows from Theorem \ref{thm:mt_is_trace} and Theorem \ref{thm:diffuse-comm-sq}.
\end{proof}

\begin{remark}
    Thus there always exists unitary $u$ (in $L(\langle b \rangle)$) such that 
    \[
    \dkk( L(\langle a \rangle), u L(\langle a \rangle)u^*) = 1.
    \]

    Also it follows from the last result that $\dmt$ takes all values in $[0, 1]$.

    Let $\mF_2 = \langle a, b\rangle$ be the free group on two generators then we have for any $n\in \Z \setminus \{0\}$,
    \[
    \dkk (L(\langle a \rangle), \,\lambda_{b^n}L(\langle a \rangle) \lambda_{b^n}^*) = 1.
    \]
\end{remark}

\vspace{0.5cm}
Let $\sM$ be a diffuse von Neumann algebra equipped with a faithful normal state $\psi$. It is an easy exercise to verify that
\[
\psi(\sM_{\mathrm{proj}})=[0,1].
\]
Since $L(G)$ is diffuse whenever $G$ is an infinite discrete group, the next result follows immediately. Nevertheless, we provide an alternative proof using the Fourier theory developed above, for completeness.

\begin{lem}\label{lem:unitayr-trace-r}
    Let $G$ be an infinite discrete group. Then for any $r \in [0, 1]$ there is a unitary in $L(G)$ such that 
    \[
    \tau(u) = (1-r^2)^{1/4}. 
    \]
\end{lem}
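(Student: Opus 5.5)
The target value $c:=(1-r^2)^{1/4}$ lies in $[0,1]$, so it suffices to show that every $c\in[0,1]$ is the trace of some unitary in $L(G)$. I would build such a unitary as a combination of a spectral projection and its complement. Specifically, I want a projection $p\in L(G)$ with $\tau(p)=t$ for some suitable $t\in[0,1]$, and then set
\[
u:=e^{i\theta}p+e^{-i\theta}(1-p).
\]
This $u$ is unitary, and its trace is
\[
\tau(u)=\cos\theta+i(2t-1)\sin\theta .
\]
The simplest choice is $t=1/2$, which gives $\tau(u)=\cos\theta$. Choosing $\theta=\arccos c\in[0,\pi/2]$ then yields $\tau(u)=c$. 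So everything reduces to producing a projection of trace $1/2$ in $L(G)$.

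\textbf{Case 1: $G$ has an element $a$ of infinite order.} I would use the Fourier transform of Lemma \ref{lem:fourier-trans-clasical} to identify $L(\langle a\rangle)\cong L^\infty(\mathbb{T})$. Under this identification, $\tau$ restricts to integration against normalized Haar measure, since $\tau(\lambda_{a^n})=\delta_{n,0}$ matches the Fourier coefficient at $0$. I would take $p$ to be the indicator of an arc of measure $1/2$, for instance the upper half circle. Then $p$ is a projection with $\tau(p)=1/2$, and it lies in $L(\langle a\rangle)\subseteq L(G)$.

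\textbf{Case 2: $G$ is an infinite torsion group.} I would take the increasing chain of finite subgroups generated by finitely many elements, or else any infinite abelian subgroup, which exists by Hall--Kulatilaka type results. Since this is heavier machinery, I would rather appeal to diffuseness. For a finite subgroup $K$ with $|K|=m$, the algebra $L(K)$ contains central projections $\frac1m\sum_{k\in K}\lambda_k$, whose traces are multiples of $1/m$. I would need to approximate $1/2$ this way, which is clumsy. The cleaner route is the one the paper itself flags just before the lemma: $L(G)$ is diffuse because $G$ is infinite. Any abelian subalgebra is then diffuse after passing to a masa, and a diffuse abelian von Neumann algebra with a faithful normal trace is $L^\infty$ of a nonatomic probability space. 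Lyapunov/intermediate-value continuity of the measure then provides a projection of any trace, in particular $1/2$.

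\textbf{Main obstacle.} The obstacle is exactly Case 2, obtaining a trace-$1/2$ projection without an element of infinite order. I expect to handle it via a masa of the diffuse algebra $L(G)$ and the nonatomic-measure argument rather than through explicit Fourier coefficients, and to mirror Proposition \ref{prop:haar-uni-non-trivial} by presenting the explicit Fourier proof only for Case 1. Once the projection is in hand, the trace computation for $u$ is routine.
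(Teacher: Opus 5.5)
Your proposal is correct, and it differs from the paper's proof in two ways.

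First, the construction of $u$. The paper picks $g\in G$ of infinite order and identifies $L(\langle g\rangle)\cong L^\infty(\mathbb{T})$. It then takes the symmetry $u=2p-1$, where $p$ is the indicator of an arc of normalized length $(1+c)/2$ and $c=(1-r^2)^{1/4}$. So the paper keeps the spectrum $\{\pm1\}$ and varies $\tau(p)$. You instead fix $\tau(p)=1/2$ and vary the spectrum $\{e^{\pm i\theta}\}$. In the infinite-order case the two are equally elementary. Your version needs only a single projection of trace $1/2$, which is sometimes explicit even without elements of infinite order, e.g.\ $p=\tfrac12(1+\lambda_g)$ when $g^2=e\neq g$.

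Second, and more significantly, the torsion case. The paper's proof asserts that every infinite group contains an element of infinite order. This is false: take $\bigoplus_{\mathbb{N}}\mathbb{Z}_2$ or an infinite Burnside group. So the paper really only establishes your Case 1. Your Case 2, via diffuseness of $L(G)$, closes this gap. It also makes Case 1 redundant, since it works for every infinite $G$; it is exactly the remark the paper makes just before the lemma, that a faithful normal state on a diffuse algebra takes all values in $[0,1]$ on projections.

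When you write Case 2 up, the two facts it relies on deserve one line each:
\begin{itemize}
\item $L(G)$ has no finite-dimensional direct summand, because $\lambda_{g_n}\to 0$ weakly for any sequence of distinct $g_n\in G$.
\item A masa $\mathcal{A}$ of a diffuse finite von Neumann algebra $\mathcal{M}$ is diffuse, because a minimal projection $p$ of $\mathcal{A}$ would force $p\mathcal{M}p=\mathbb{C}p$.
\end{itemize}

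Finally, you were right to drop the idea of exhausting $G$ by finite subgroups. Infinite torsion groups need not be locally finite, so that route fails in general.
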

\begin{proof}
Given that $G$ is of infinite order, then there exists an element $g\in G$ such that the order of $g$ is infinite. Consider the subgroup $H=\gen{g}$. Then the corresponding von Neumann algebra $L(H)$ is abelian and isomorphic to $L(\Z)$. Moreover, the trace $\tau_{|L(H)}$ is the faithful normal trace on $L(H)$. By the Fourier transform (see Lemma \ref{lem:fourier-trans-clasical}), we have $L(H)\cong L^{\infty}(\mathbb{T})$, and the trace $\tau_{|L(H)}$ corresponds to integration on $L^{\infty}(\mathbb{T})$. Thus, under this identification, it suffices to find a function $f$ with $|f|=1$ in $L^{\infty}(\mathbb{T})$ such that
\[
\int_{\mathbb{T}}f\,d\theta = (1-r^2)^{1/4}.
\]
Consider the arc $A= \{e^{i\theta}\in \mathbb{T}\,\, \mid\, \,0 \le \theta < \pi(1 + (1-r^2)^{1/4}) \}$ and 
define the function $f$ as
$$f(e^{i\theta}) = \begin{cases} 1 & \text{for } e^{i\theta}\in A \\ -1 & \text{for } e^{i\theta}\in \mathbb{T} \setminus A \end{cases}$$
Then clearly $|f|=1$ and the normalized length of the arc $A$ is $\frac{1+(1-r^2)^{1/4}}{2}$. Thus 
\[
\int_{\mathbb{T}}f\,d\theta = \int_{A}f\,d\theta+ \int_{ \mathbb{T} \setminus A}f\,d\theta= (1-r^2)^{1/4}.
\]
\end{proof}

\begin{cor}\label{cor:mt-attains-val}
    For any $r\in [0, 1]$ there exists a unitary $u \in L(\mF_2)$ such that
    \[
    \dmt(L(\langle a \rangle), uL(\langle a \rangle)u^*) = r.
    \]
\end{cor}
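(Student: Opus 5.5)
The corollary should follow by combining Theorem \ref{thm:mt_is_trace} with Lemma \ref{lem:unitayr-trace-r}, applied inside the subgroup $\langle b\rangle\cong\Z$ rather than in all of $\mF_2$. Fix $r\in[0,1]$. Since $\langle b\rangle$ is an infinite discrete group, Lemma \ref{lem:unitayr-trace-r} applied to $G=\langle b\rangle$ gives a unitary $u\in L(\langle b\rangle)$ with $\tau_{L(\langle b\rangle)}(u)=(1-r^2)^{1/4}$. Regarding $u$ as an element of $L(\mF_2)$, this unitary lies in $L(\langle b\rangle)\subseteq L(\mF_2)$, as Theorem \ref{thm:mt_is_trace} requires.

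The one point to check is that the trace is the same in both algebras. The canonical trace on $L(\mF_2)$ restricts to the canonical trace on $L(\langle b\rangle)$, since both are the vector state at $\delta_e$, or equivalently by Proposition \ref{prop:fourier-facts}(vi). Hence $\tau(u)=(1-r^2)^{1/4}$ also in $L(\mF_2)$.

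By Theorem \ref{thm:mt_is_trace},
\[
\dmt(L(\langle a\rangle),uL(\langle a\rangle)u^*)=\sqrt{1-|\tau(u)|^4}=\sqrt{1-(1-r^2)}=\sqrt{r^2}=r,
\]
which uses $r\ge 0$. The endpoints need no separate treatment. For $r=0$ the lemma produces $\tau(u)=1$, so $u=1$ and the distance is $0$. For $r=1$ it produces a trace-zero unitary, and the distance is $1$.

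Nothing here is a real obstacle. The substantive work is already done in Theorem \ref{thm:mt_is_trace} and in the arc construction of Lemma \ref{lem:unitayr-trace-r}, so the only step needing care is the compatibility of the traces noted above.
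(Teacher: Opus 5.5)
Your proof is correct and is essentially the paper's argument: you apply Lemma \ref{lem:unitayr-trace-r} to get a unitary $u\in L(\langle b\rangle)$ with $\tau(u)=(1-r^2)^{1/4}$, then invoke Theorem \ref{thm:mt_is_trace}. The paper's proof cites Theorem \ref{thm:mashood_taylor} at that step, which appears to be a slip, so your citation is the right one; your check that the trace of $L(\mF_2)$ restricts to that of $L(\langle b\rangle)$ is a detail the paper leaves implicit.
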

\begin{proof}
    By Lemma \ref{lem:unitayr-trace-r} there exists a unitary in $L(\gen{b})$ such that $\tau(u)= (1-r^2)^{1/4}$. Then by the Theorem \ref{thm:mashood_taylor}, we get the desired result.
\end{proof}

\begin{theorem}\label{thm:dkk-1-for-non-tiv-alg}
    Let $\mF_2 = \langle a, b\rangle$ be the free group on two generators then there exists a unitary $u \in L(\langle b \rangle)$ such that $uL(\langle a \rangle)u^*$ is not a group von Neumann algebra corresponding to any subgroup of $\mF_2$ and 
    \[
    \dkk (L(\langle a \rangle), \,uL(\langle a \rangle) u^*) = 1=\dmt (L(\langle a \rangle), u L(\langle a \rangle)u^*).
    \]
\end{theorem}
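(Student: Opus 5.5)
We need a unitary $u\in L(\langle b\rangle)$ with two properties at once: $\tau(u)=0$, so that Theorem \ref{thm:mt_is_trace} gives distance $1$, and at least two distinct nonzero Fourier coefficients, so that the argument of Lemma \ref{lem:non-subgr-alg} shows $uL(\langle a\rangle)u^*$ is not a subgroup algebra. Proposition \ref{prop:haar-uni-non-trivial}, applied inside $\langle b\rangle\cong\Z$, provides such a $u$. Since $b$ has infinite order, the proof given there applies directly. Concretely, take the Blaschke product $f(z)=z\frac{z-1/2}{1-z/2}$ and set $u:=f(\lambda_b)$ via the identification $L(\langle b\rangle)\cong L^\infty(\mathbb{T})$ of Lemma \ref{lem:fourier-trans-clasical}. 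Then $u$ is a unitary in $L(\langle b\rangle)$ with $\widehat{u}(e)=f(0)=0$, $\widehat{u}(b)=-1/2\neq0$ and $\widehat{u}(b^2)=3/4\neq0$.

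\textbf{Step 1 (not a subgroup algebra).} Lemma \ref{lem:non-subgr-alg} is stated for $u$ "as in Proposition \ref{prop:unitary-with-fourier-coef}", but its proof uses only two facts: $u\in L(\langle b\rangle)$ is a unitary, and $\widehat{u}$ has two nonzero values at distinct points $b^{j_1}\neq b^{j_2}$. Here we take $j_1=1$, $j_2=2$. The same computation then shows $b^{j_1}ab^{-j_2}$ and $b^{j_1}ab^{-j_1}$ lie in any candidate $H$, hence $b^{-1}\in H$. Commutativity of $L(\langle b\rangle)$ then forces $\lambda_{b^{-1}}\in L(\langle a\rangle)$, which is a contradiction. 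So I would re-invoke that argument verbatim for this $u$.

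\textbf{Step 2 (distances).} By Theorem \ref{thm:mt_is_trace}, $\dmt(L(\langle a\rangle),uL(\langle a\rangle)u^*)=\sqrt{1-|\tau(u)|^4}=1$, since $\tau(u)=\widehat{u}(e)=0$. Proposition \ref{prop:mt_and_kk} gives $1=\dmt\le\dkk$, and Remark \ref{KK-facts}(i) gives $\dkk\le1$. Hence both distances equal $1$.

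The only delicate point is Step 1, specifically confirming that Lemma \ref{lem:non-subgr-alg} does not depend on anything particular to the construction $u_t=\exp(ith)$ of Proposition \ref{prop:unitary-with-fourier-coef}. Inspecting its proof, only support in $\langle b\rangle$ and two nonzero coefficients are used, so this is safe. If one prefers, the lemma can be restated with that hypothesis before applying it.
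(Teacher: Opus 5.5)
Your proposal is correct and follows essentially the same route as the paper. The paper picks $u\in L(\langle b\rangle)$ via Proposition~\ref{prop:haar-uni-non-trivial} with $\tau(u)=0$ and two nonzero Fourier coefficients, invokes Lemma~\ref{lem:non-subgr-alg} to rule out $uL(\langle a\rangle)u^*=L(H)$, and concludes $1=\sqrt{1-|\tau(u)|^4}=\dmt\le\dkk\le 1$ from Theorem~\ref{thm:mt_is_trace} and Proposition~\ref{prop:mt_and_kk}.

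Your explicit Blaschke choice, with $\widehat{u}(b)=-1/2$ and $\widehat{u}(b^2)=3/4$, makes concrete what the paper leaves implicit. So does your remark that Lemma~\ref{lem:non-subgr-alg} really only uses support in $\langle b\rangle$ together with two nonzero coefficients, not the specific unitary from Proposition~\ref{prop:unitary-with-fourier-coef}.
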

\begin{proof}
    Using Proposition \ref{prop:haar-uni-non-trivial}, choose a unitary $u\in L(\langle b \rangle)$ such that $\tau(u) = 0$ and $u$ has at least two non-zero Fourier coefficients. Since $u$ has two non-zero Fourier coefficient, by Lemma \ref{lem:non-subgr-alg} $uL(\langle a \rangle)u^* \ne L(H)$ for any $H\le G$. Now using Proposition \ref{prop:mt_and_kk} and Theorem \ref{thm:mt_is_trace} we get:
    \[
    1= \sqrt{1- |\tau(u)|^4}=\dmt (L(\langle a \rangle), u L(\langle a \rangle)u^*)  \le \dkk (L(\langle a \rangle), u L(\langle a \rangle)u^*) \le 1.
    \]
\end{proof}

    \section{Acknowledgment}
    We would like to thank Prof. Keshab Chandra Bakshi for his valuable insights and for patiently addressing our questions regarding spin model subfactors. Sumit Kumar gratefully acknowledges Prof. Keshab Chandra Bakshi for providing him with the opportunity to work at IIT Kanpur under his guidance through Project No. SPO/ANRF/MATH/2025271.

	\medskip
	
	\bibliographystyle{amsalpha} 
	\bibliography{references}

\end{document}